\def\timenow{\@tempcnta\time
  \@tempcntb\@tempcnta
  \divide\@tempcntb60
  \ifnum10>\@tempcntb0\fi\number\@tempcntb
  \multiply\@tempcntb60
  \advance\@tempcnta-\@tempcntb
  :\ifnum10>\@tempcnta0\fi\number\@tempcnta}
\newtheorem{theo}{Theorem}[section]
\newtheorem{prop}[theo]{Proposition}
\newtheorem{lemma}[theo]{Lemma}
\newtheorem{cor}[theo]{Corollary}
\newtheorem{remark}[theo]{Remark}
\def\E{\mathbb{E}}
\def\R{\mathbb{R}}
\def\dd{\textnormal{d}}
\def\0{\textbf{0}}
\def\Ms{\mathcal{M}}
\def\Es{\mathcal{E}}
\newcommand*{\affmark}[1][*]{\textsuperscript{#1}}
\title[Large deviation principle for the absorption time of the Beta-coalescent]{Large deviation principle for the absorption time of the Beta-coalescent via integral functionals}
\author{Gr\'egoire V\'echambre\affmark[1]}
\address{\affmark[1]State Key Laboratory of Mathematical Sciences, Academy of Mathematics and Systems Science, Chinese Academy of Sciences, Beijing 100190, China}
\email{vechambre@amss.ac.cn}
\begin{document} 

\maketitle

\begin{abstract}
We study some aspects of the absorption time of the Beta$(a,b)$-Coalescent starting with $n$ blocks. More precisely, when $a>1$, the absorption time is known to converge to infinity as $n$ goes to infinity, and we prove that it satisfies a large deviation principle. When $a \in (0,1)$, it is known that the coalescent comes down from infinity, and we derive bounds for the convergence in the Kolmogorov distance of the distribution of the absorption time as $n$ goes to infinity. To prove our results we introduce a method, inspired from statistical mechanics, that allows to infer the asymptotic behavior of the Laplace transforms of some integral functionals of the Beta-coalescent as the initial number of blocks $n$ goes to infinity. As a by-product of our proofs we also obtain estimates for the record probabilities of the Beta-coalescent. 
\end{abstract}

{\footnotesize
\noindent{\slshape\bfseries MSC 2020.} 60J27, 60J74, 60J90, 60F10, 60J55, 60B10
	
	\medskip 
	\noindent{\slshape\bfseries Keywords.} Coalescent, Absorption time, Large deviations, Convergence rate, Integral functionals}

\section{Introduction} \label{intro}

For a finite and non-zero measure $\Lambda$ on $[0,1]$ (we denote this by $\Lambda\in\Ms_f([0,1])$), the $\Lambda$-coalescent starting with $n$ blocks is a Markov process $(\Pi^n_t)_{t\geq 0}$ on partitions of $\{1,\ldots,n\}$ introduced by \cite{pitman1999} and \cite{sagitov1999}. This process has the following dynamics: if there are currently $p$ blocks in the partition, for $k \in \{2,\ldots,p\}$, any $k$ of the blocks merge into one with rate 
\begin{align}
\lambda_{p,k}(\Lambda):=\int_{[0,1]} r^{k-2}(1-r)^{p-k} \Lambda(dr). \label{coalrates}
\end{align}
The $\Lambda$-coalescent is an exchangeable coalescent that generalizes the classical Kingman coalescent (corresponding to the case $\Lambda=\delta_0$) by allowing multiple mergers instead of only binary mergers. It can also be seen as a particular case of coalescents with simultaneous multiple collisions \cite{10.1214/EJP.v5-68}. 

The $\Lambda$-coalescent is related to the genealogy of several population models \cite{10.1214/aop/1015345761,schweinsberg2003,DURRETT20051628,Huillet2014,10.1214/22-EJP739} and in particular of $\Lambda$-Fleming-Viot flows \cite{BLGI,labbe2014a,Griffiths2014,vechwkt} and of Continuous State Branching Processes \cite{ref29surveyb,ref36surveyb,ref19surveyb,ref18surveyb}. The process $(\Pi^n_t)_{t\geq 0}$ has been intensely studied, see for example the survey works \cite{Berestycki2009,surveylambdacoal,kerstingwakolbinger2020} and references therein. 

A famous two-parameter family of the $\Lambda$-coalescent is called the Beta-coalescent. It corresponds to choices of $\Lambda$ given by $\beta_{a,b}(\dd r):=r^{a-1}(1-r)^{b-1}\dd r$ for $a,b>0$. The transitions rates from \eqref{coalrates} can then be expressed as 
\begin{align}
\lambda_{p,k}(\beta_{a,b})=B(a+k-2,b+p-k)=\frac{\Gamma(a+k-2)\Gamma(b+p-k)}{\Gamma(a+b+p-2)}, \label{exprelambdapk}
\end{align}
where $B(\cdot,\cdot)$ is the beta function and where we have used the identity $B(x,y)=\Gamma(x)\Gamma(y)/\Gamma(x+y)$. An important case is when $a+b=2$, i.e. $a=2-\alpha$ and $b=\alpha$ for some $\alpha \in (0,2)$, which is called the $Beta(2-\alpha,\alpha)$-coalescent. We note that several authors renormalize the measure by $B(a,b)^{-1}$ in order to make it a probability measure. Multiplication of the measure $\Lambda$ by a constant is harmless (because equivalent to a linear time change of the system), so we decide to not include the renormalization in order to make expressions lighter. 

A quantity of particular interest is the \textit{absorption time}, $\tau_n:=\inf \{ t \geq 0, |\Pi^n_t|=1 \}$ of the $\Lambda$-coalescent started with $n$ blocks, i.e. $\tau_n$ is the time required for the $n$ initial blocks of the $\Lambda$-coalescent $(\Pi^n_t)_{t\geq 0}$ to be gathered into one. From the genealogical point of view, given a population of $n$ individuals whose genealogy can be modeled by a $\Lambda$-coalescent, $\tau_n$ represents how much time one needs to go back in the past to meet the most recent common ancestor of the population. The absorption time $\tau_n$ converges to a finite limit in the case of \textit{coming down from infinity} and to infinity in other cases \cite{schweinsberg2000}. In the case of the Beta-coalescent (i.e. $\Lambda=\beta_{a,b}$), coming down from infinity is equivalent to $a \in (0,1)$. The absorption time of the $\Lambda$-coalescent has been studied by many authors \cite{pitman1999,schweinsberg2000,10.1214/EJP.v10-265,ref27ofsurvey,mohle2014,10.1214/14-AAP1077,kerstingwakolbinger2018}. When the absorption time converges to infinity, it is known that is satisfied a law of large numbers \cite{kerstingwakolbinger2018}. More precisely, the following convergence in probability holds: 
\begin{align}
\frac{\tau_n}{\log n} \overset{\mathbb{P}}{\underset{n \rightarrow \infty}{\longrightarrow}} \frac1{\mu(\Lambda)}, \qquad \text{where} \ \mu(\Lambda):=\int_{(0,1)}|\log(1-r)|r^{-2}\Lambda(\dd r). \label{lgnclassical}
\end{align}
Central limit theorems for the absorption time were also established in \cite{ref27ofsurvey,kerstingwakolbinger2018}. A natural question is to complete these results by a large deviation principle for the absorption time, however, up to our knowledge, no result in that direction has been established so far. Establishing such a large deviation principle in the case of the Beta-coalescent is one of the goals of this paper. Our proof uses completely different ideas from those in previous studies of the absorption time. In particular, our focus is mainly on studying the asymptotic behavior of Laplace transforms of integral functionals of the $\Lambda$-coalescent (of which $\tau_n$ is a particular case). 

In the case of coming down from infinity, convergence of the absorption time in $L^p$ has been studied in \cite{mohle2014} but it seems that the convergence rate has not been studied so far. Our results on the asymptotic behavior of Laplace transforms of integral functionals provide a way to study the convergence in the Kolmogorov distance of $\tau_n$ toward its limit distribution. As a by-product of this approach we also obtain estimates for the record probabilities of the Beta-coalescent that were studied in \cite{10.1214/14-AAP1077}. 

When the $\Lambda$-coalescent $(\Pi^n_t)_{t\geq 0}$ currently contains $k$ blocks, the total transition rate is $\lambda_k(\Lambda):=\sum_{\ell =2}^k \binom{k}{\ell}\lambda_{k,\ell}(\Lambda)$, which can also be expressed as 
\begin{align}
\lambda_k(\Lambda) = \int_{(0,1)} (1-(1-r)^k - kr(1-r)^{k-1}) r^{-2} \Lambda(\dd r). \label{deflambdan}
\end{align}
When, $\Lambda=\beta_{a,b}$, the precise expressions and asymptotics of $\lambda_n(\beta_{a,b})$ are recalled in Appendix \ref{coeflambdanab}. It turns out that asymptotic expansions of quotients of the form $\lambda_n(\beta_{a,b'}))/\lambda_n(\beta_{a,b})$ will play a key role in the proof of our results. 

For any $\Lambda\in\Ms_f([0,1])$, in order to specify the dependency on $\Lambda$, we use the notations $\mathbb{P}_{\Lambda}$ (resp. $\E_{\Lambda}$) for probabilities (resp. expectations) with respect to the $\Lambda$-coalescent. 

\section{Main results}
\subsection{Large deviation principle for the absorption time} \label{sectionldpmain}

Before stating the large deviation result for $\tau_n$ we need to introduce some definitions. Let $L_{a,b}: (-b,\infty) \to \mathbb{R}$ be the function defined by 
\begin{equation} \label{deffcttoinvert}
L_{a,b}(y) := \left\{
\begin{aligned}
& \frac{\Gamma(a)}{(2-a)(a-1)} \left ( \frac{\Gamma(b+y)}{\Gamma(a+b+y-2)} - \frac{\Gamma(b)}{\Gamma(a+b-2)} \right ) \ \text{if} \ a \in (1,\infty) \setminus \{2\}, \\
& \frac{\Gamma'(b+y)}{\Gamma(b+y)} - \frac{\Gamma'(b)}{\Gamma(b)} \ \text{if} \ a=2. 
\end{aligned} \right. 
\end{equation}
Note that the function $\Gamma'(\cdot)/\Gamma(\cdot)$ is known as the \textit{Digamma function}. The Gamma function has a pole at $0$ so, when $a \in (1,2)$ and $y=2-a-b$, the value of $\Gamma(b+y)/\Gamma(a+b+y-2)$ needs to be defined by continuity by setting it to be zero. Similarly, when $a+b=2$ (i.e. case of the $Beta(2-\alpha,\alpha)$-coalescent), the term $\Gamma(b)/\Gamma(a+b-2)$ in \eqref{deffcttoinvert} vanishes. Let us set $D_{a,b}:=\infty$ when $a \in (1,2]$ and $D_{a,b}:=\Gamma(a)\Gamma(b)/\Gamma(a+b-2)(a-2)(a-1)$ when $a>2$. It follows from Lemmas \ref{fctzetaablem}-\ref{convexity}-\ref{fctl2b} (distinguishing cases $a \in (1,2)$, $a>2$, and $a=2$) that $L_{a,b}(\cdot)$ is a $\mathcal{C}^1$ concave-increasing bijection from $(-b,\infty)$ to $(-\infty,D_{a,b})$. We can thus define it's inverse function $\zeta_{a,b}: \mathbb{R} \to (-b,\infty]$ (setting $\zeta_{a,b}(x)=\infty$ when $a>2$ and $x\geq D_{a,b}$). It also follows from Lemmas \ref{fctzetaablem}-\ref{convexity}-\ref{fctl2b} that $\zeta_{a,b}(\cdot)$ is $\mathcal{C}^1$ and convex-increasing on $(-\infty,D_{a,b})$. We note that, for any $k \geq 2$, we have $\lambda_k(\beta_{a,b})<D_{a,b}$ (this is trivial when $a \in (1,2]$ and this follows from Lemma \ref{equlambdnabspecialvalues} when $a>2$). 
We then define a function $\mathcal{I}_{a,b}: \mathbb{R} \to [0,\infty)$ as the Legendre transform of $\zeta_{a,b}(\cdot)$: 
\begin{align}
\mathcal{I}_{a,b}(x)=\sup \{ \theta x - \zeta_{a,b}(\theta); \theta \in \mathbb{R} \}. \label{defiaslegendrezeta}
\end{align}
Note that $\mathcal{I}_{a,b}(x)=\infty$ for $x<0$, $\mathcal{I}_{a,b}(0)=b$, and $\mathcal{I}_{a,b}(x)\in[0,\infty)$ for $x \geq 0$. For any $k \geq 2$, we define a function $\mathcal{I}^k_{a,b}: \mathbb{R} \to [0,\infty]$ by capping the increase rate of $\mathcal{I}_{a,b}(\cdot)$ at $\lambda_k(\beta_{a,b})$. More precisely, it is a classical property of Legendre transform that the derivatives of $\zeta_{a,b}(\cdot)$ and $\mathcal{I}_{a,b}(\cdot)$ are inverse of each others so, setting $x_k:=\zeta'_{a,b}(\lambda_k(\beta_{a,b}))$, we have $\mathcal{I}'_{a,b}(x_k)=\lambda_k(\beta_{a,b})$ and $\mathcal{I}'_{a,b}(x)\geq \lambda_k(\beta_{a,b})$ if and only if $x \geq x_k$. The function $\mathcal{I}^k_{a,b}(\cdot)$ thus has the following expression: 
\begin{equation}\label{defikab}
\mathcal{I}^k_{a,b}(x) := \left\{
\begin{aligned}
& \mathcal{I}_{a,b}(x) \ \text{if} \ x \leq x_k, \\
& \mathcal{I}_{a,b}(x_k)+\lambda_k(\beta_{a,b})(x-x_k) \ \text{if} \ x \geq x_k. \end{aligned} \right. 
\end{equation}
We can now state the large deviation principle satisfied by the absorption times $\tau_n$. 
\begin{theo} \label{thmpgd}
For any $a>1, b>0$, the family $(\tau_n/\log n)_{n \geq 1}$ under $\mathbb{P}_{\beta_{a,b}}$ satisfies a large deviation principle with good rate function $\mathcal{I}^2_{a,b}(\cdot)$ and speed $\log n$. More precisely, for any closed set $F \subset \mathbb{R}$, 
\begin{align}
\limsup_{n \rightarrow \infty} \frac{1}{\log n} \log \mathbb{P}_{\beta_{a,b}} \left ( \frac{\tau_n}{\log n} \in F \right ) \leq -\inf_{x \in F} \mathcal{I}^2_{a,b}(x), \label{ldpupperbound}
\end{align}
and for any open set $G \subset \mathbb{R}$, 
\begin{align}
\liminf_{n \rightarrow \infty} \frac{1}{\log n} \log \mathbb{P}_{\beta_{a,b}} \left ( \frac{\tau_n}{\log n} \in G \right ) \geq -\inf_{x \in G} \mathcal{I}^2_{a,b}(x). \label{ldplowerbound}
\end{align}
\end{theo}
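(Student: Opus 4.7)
My approach is the Gärtner--Ellis theorem at speed $\log n$. The goal is to establish
\begin{equation*}
\Lambda_n(\theta):=\frac{1}{\log n}\log \E_{\beta_{a,b}}[e^{\theta\tau_n}] \underset{n\to\infty}{\longrightarrow} \zeta_{a,b}(\theta)
\end{equation*}
for every $\theta<\lambda_2(\beta_{a,b})$. For $\theta\geq\lambda_2(\beta_{a,b})$ the limit is $+\infty$, because the event that the coalescent visits the state with $2$ blocks has probability bounded away from $0$ uniformly in $n$, and on that event $\tau_n$ dominates an independent $\mathrm{Exp}(\lambda_2(\beta_{a,b}))$ variable. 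The Legendre transform of $\zeta_{a,b}$ extended to $+\infty$ on $[\lambda_2(\beta_{a,b}),\infty)$ is exactly $\mathcal{I}^2_{a,b}$: for $x\leq x_2$ the supremum in \eqref{defiaslegendrezeta} is attained in the interior and equals $\mathcal{I}_{a,b}(x)$, while for $x>x_2$ it is attained at the boundary $\theta=\lambda_2(\beta_{a,b})$, producing the linear tail prescribed by \eqref{defikab}.

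The central analytic step is the convergence of $\Lambda_n(\theta)$. I would view $\tau_n=\int_0^\infty\mathbf{1}_{|\Pi^n_t|>1}\,\dd t$ as an integral functional of the coalescent and invoke the asymptotic machinery for Laplace transforms of such functionals developed earlier in the paper. Decomposing at the first jump yields the linear recursion
\begin{equation*}
(\lambda_n(\beta_{a,b})-\theta)\,\Phi_n(\theta) = \sum_{j=2}^n \binom{n}{j}\lambda_{n,j}(\beta_{a,b})\,\Phi_{n-j+1}(\theta),\qquad \Phi_1(\theta)=1,
\end{equation*}
for $\Phi_n(\theta):=\E_{\beta_{a,b}}[e^{\theta\tau_n}]$. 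A Gibbs-type ansatz $\Phi_n(\theta)\sim C(\theta)\,\Gamma(n+b+y)/\Gamma(n+b)$, with $y$ playing the role of an inverse temperature, turns this recursion---after resummation controlled by the quotients $\lambda_n(\beta_{a,b+y})/\lambda_n(\beta_{a,b})$ highlighted in the introduction---into the fixed-point equation $\theta=L_{a,b}(y)$, whence $y=\zeta_{a,b}(\theta)$. Since $\log(\Gamma(n+b+y)/\Gamma(n+b))\sim y\log n$, this yields $\Lambda_n(\theta)\to\zeta_{a,b}(\theta)$.

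With the cumulant limit in place, Gärtner--Ellis supplies \eqref{ldpupperbound} on compact sets and \eqref{ldplowerbound} on open subsets of the exposed range $\{x<x_2\}$. A crude Chernoff estimate at any $\theta_0\in(0,\lambda_2(\beta_{a,b}))$ gives polynomial tightness (the speed-$\log n$ analogue of exponential tightness) and upgrades the upper bound to arbitrary closed $F$. For the lower bound on $x>x_2$---non-exposed because $\zeta'_{a,b}$ is finite at $\lambda_2(\beta_{a,b})$---I would use a two-step decomposition. On the event that the coalescent visits state $2$, write $\tau_n=\tau_n'+T_2$ with $\tau_n'$ the time to reach state $2$ and $T_2$ an independent $\mathrm{Exp}(\lambda_2(\beta_{a,b}))$ by the strong Markov property. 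For $\eta>0$, the lower bound already available at the exposed point $x_2-\eta$ transfers to $\tau_n'$ (since $\tau_n-\tau_n'=T_2$ is $O(1)$ with high probability), giving $\mathbb{P}_{\beta_{a,b}}(\tau_n'/\log n\in(x_2-\eta,x_2-\eta/2))\geq n^{-\mathcal{I}_{a,b}(x_2-\eta)+o(1)}$, while $\mathbb{P}(T_2/\log n\in(x-x_2+\eta,x-x_2+3\eta/2))$ is of order $n^{-\lambda_2(\beta_{a,b})(x-x_2+\eta)}$. Multiplying, taking $\log/\log n$, and letting $\eta\to 0$ reproduces $\mathcal{I}^2_{a,b}(x)$.

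The main obstacle is the rigorous extraction of the Gibbs asymptotic, that is, proving $\Phi_n(\theta)\sim C(\theta)\,n^{\zeta_{a,b}(\theta)}$ locally uniformly on $(-\infty,\lambda_2(\beta_{a,b}))$. This demands sharp remainder estimates in the above recursion and fine control of the quotients $\lambda_n(\beta_{a,b+y})/\lambda_n(\beta_{a,b})$; everything downstream is a routine assembly.
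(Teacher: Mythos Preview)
Your overall architecture---G\"artner--Ellis at speed $\log n$, cumulant limit $\zeta_{a,b}(\theta)$ identified via a Gibbs-type ansatz, and a decomposition $\tau_n=\tau_n'+T_2$ to reach the non-exposed region $x>x_2$---matches the paper's proof. Two points deserve comment.

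First, you aim for more than you need on the cumulant side. You propose to prove $\Phi_n(\theta)\sim C(\theta)\,n^{\zeta_{a,b}(\theta)}$ with a genuine multiplicative constant; the paper only proves (and only uses) the logarithmic version $\log\Phi_n(\theta)/\log n\to\zeta_{a,b}(\theta)$. It obtains this not by controlling remainders in the recursion directly, but by constructing explicit target sequences $u_n=\Gamma(b+\ell+n-1)\Gamma(b)/\Gamma(b+n-1)\Gamma(b+\ell)$ for which the associated $\psi$ can be computed in closed form and shown to have the desired limit $L_{a,b}(\ell)$, and then sandwiching an arbitrary $\psi$ between two such particular cases via a monotonicity lemma. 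This sidesteps the ``sharp remainder estimates'' you flag as the main obstacle.

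Second, and more seriously, your lower bound at $x>x_2$ has a gap. The decomposition $\tau_n=\tau_n'+T_2$ holds only on the event $A_n^2=\{\text{state }2\text{ is visited}\}$; on its complement $\tau_n=\tau_n'$ and there is no independent exponential to exploit. Consequently the ``multiplying'' step requires a lower bound on the \emph{joint} probability $\mathbb{P}_{\beta_{a,b}}(A_n^2,\ \tau_n'/\log n\in I_1)$, not on $\mathbb{P}_{\beta_{a,b}}(\tau_n'/\log n\in I_1)$. Your transfer argument (``$\tau_n-\tau_n'=O(1)$ with high probability'') yields only the unconditional bound for $\tau_n'$, and one cannot subtract off the $\neg A_n^2$ contribution: $\mathbb{P}_{\beta_{a,b}}(\neg A_n^2,\ \tau_n'/\log n\in I_1)=\mathbb{P}_{\beta_{a,b}}(\neg A_n^2,\ \tau_n/\log n\in I_1)$ is of the \emph{same} polynomial order $n^{-\mathcal{I}_{a,b}(x_2-\eta)+o(1)}$ as the lower bound you have, so the difference is uncontrolled. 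The paper closes this gap by proving a \emph{conditional} version of the integral-functional asymptotics (its Proposition~\ref{asymptfunctionalonak}): it shows that $\log\E_{\beta_{a,b}}[e^{\theta\int_0^{\tau_n}\psi_{k+1}(|\Pi^n_s|)\dd s}\mid A_n^k]/\log n\to\zeta_{a,b}(\theta)$, which requires building separate target sequences for the restricted expectations $\Es^{\Lambda}_{n,k}(\cdot)$ (Lemmas~\ref{targetsequenceonak}--\ref{behavspecialpsionak}). As a by-product it also obtains $\log\mathbb{P}_{\beta_{a,b}}(A_n^k)/\log n\to 0$. With these two ingredients the joint probability factorises as $\mathbb{P}_{\beta_{a,b}}(A_n^2)\cdot\mathbb{P}_{\beta_{a,b}}(\tau_n'/\log n\in I_1\mid A_n^2)$ and each factor is controlled. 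This conditional machinery is not a routine consequence of the unconditional LDP and is the missing piece in your sketch.
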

Let us denote by $T_n^k$ the hitting time of $\{1,\ldots,k-1\}$, i.e. $T_n^k:=\inf \{ t \geq 0, |\Pi^n_t|<k \}$. Some aspects of these hitting times were studied in \cite{10.1214/14-AAP1077} where they are called \textit{partial depth of the $n$-coalescent}. Note that $T_n^2$ is just the absorption time $\tau_n$. It is reasonable to wonder what can be said for the hitting time $T_n^k$. This question is answered in the following generalization of Theorem \ref{thmpgd}. 
\begin{theo} \label{thmpgdlevelk}
For any $a>1, b>0$, and $k \geq 2$, the family $(T_n^k/\log n)_{n \geq 1}$ under $\mathbb{P}_{\beta_{a,b}}$ satisfies a large deviation principle with good rate function $\mathcal{I}^k_{a,b}(\cdot)$ and speed $\log n$. More precisely, for any closed set $F \subset \mathbb{R}$, 
\begin{align}
\limsup_{n \rightarrow \infty} \frac{1}{\log n} \log \mathbb{P}_{\beta_{a,b}} \left ( \frac{T_n^k}{\log n} \in F \right ) \leq -\inf_{x \in F} \mathcal{I}^k_{a,b}(x), \label{ldpupperboundlvlk}
\end{align}
and for any open set $G \subset \mathbb{R}$, 
\begin{align}
\liminf_{n \rightarrow \infty} \frac{1}{\log n} \log \mathbb{P}_{\beta_{a,b}} \left ( \frac{T_n^k}{\log n} \in G \right ) \geq -\inf_{x \in G} \mathcal{I}^k_{a,b}(x). \label{ldplowerboundlvlk}
\end{align}
\end{theo}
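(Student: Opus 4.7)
The plan is to apply the Gärtner–Ellis theorem to the rescaled cumulants
\[
\psi_n^k(\theta) := \frac{1}{\log n}\,\log \mathbb{E}_{\beta_{a,b}}\!\bigl[e^{\theta T_n^k}\bigr],
\]
and to identify their pointwise limit as $\zeta_{a,b}(\theta)$ for $\theta<\lambda_k(\beta_{a,b})$ and $+\infty$ for $\theta\geq\lambda_k(\beta_{a,b})$. By Legendre duality, the transform of this truncated cumulant is exactly $\mathcal{I}^k_{a,b}$: this is precisely what \eqref{defikab} does, the $\lambda_k(\beta_{a,b})$-linear extension of $\mathcal{I}_{a,b}$ matching the truncation of $\zeta_{a,b}$ at $\theta=\lambda_k(\beta_{a,b})$. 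The upper bound \eqref{ldpupperboundlvlk} then follows from the standard Chernoff–Markov estimate, and the lower bound \eqref{ldplowerboundlvlk} follows once every $x\in[0,\infty)$ is shown to be an exposed point of $\mathcal{I}^k_{a,b}$. For $x<x_k$ this is immediate from strict convexity of $\mathcal{I}_{a,b}$, itself a consequence of $\mathcal{C}^1$ strict convexity of $\zeta_{a,b}$; for $x\geq x_k$ one exhibits the exposing hyperplane with slope $\lambda_k(\beta_{a,b})$ directly, using that $\mathcal{I}^k_{a,b}$ is affine on $[x_k,\infty)$.

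\textbf{The core: limit of $\psi_n^k$.} Write $F_n(\theta):=\mathbb{E}_{\beta_{a,b}}[e^{\theta T_n^k}]$. Applying the strong Markov property of the block-counting chain at its first jump (the Feynman–Kac decomposition that realizes the integral-functional framework announced in the abstract, since $T_n^k=\int_0^{\tau_n}\mathbf{1}_{|\Pi^n_t|\geq k}\dd t$) yields the linear recursion
\[
\bigl(\lambda_n(\beta_{a,b})-\theta\bigr) F_n(\theta) \;=\; \sum_{j=2}^n \binom{n}{j}\lambda_{n,j}(\beta_{a,b})\,F_{n-j+1}(\theta), \qquad n\geq k,
\]
with boundary condition $F_m(\theta)=1$ for $m<k$. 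Substituting the ansatz $F_n\asymp n^{\zeta}$ into this recursion and using $\lambda_n(\beta_{a,b})=\sum_{j=2}^n\binom{n}{j}\lambda_{n,j}(\beta_{a,b})$, the equation collapses at leading order to
\[
\sum_{j=2}^n \binom{n}{j}\lambda_{n,j}(\beta_{a,b})\bigl[1-(1-(j-1)/n)^{\zeta}\bigr] \;\xrightarrow[n\to\infty]{}\; \theta.
\]
Via $\lambda_{n,j}(\beta_{a,b})=\int_0^1 r^{a+j-3}(1-r)^{n-j+b-1}\dd r$ and the binomial theorem, the left-hand side rewrites as $\int_0^1 r^{a-3}(1-r)^{b-1}\mathbb{E}\bigl[(1-(1-(J-1)/n)^{\zeta})\mathbf{1}_{J\geq 2}\bigr]\dd r$ with $J\sim\mathrm{Bin}(n,r)$, whose $n\to\infty$ limit is $\int_0^1 r^{a-3}(1-r)^{b-1}[1-(1-r)^{\zeta}]\dd r = B(a-2,b)-B(a-2,b+\zeta)$, the two Beta integrals interpreted via analytic continuation through the Gamma function. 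A direct manipulation identifies this difference with $L_{a,b}(\zeta)$, so that the fixed-point condition $L_{a,b}(\zeta)=\theta$, equivalently $\zeta=\zeta_{a,b}(\theta)$, arises naturally.

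\textbf{Main obstacle.} Turning this heuristic into two-sided polynomial bounds $c_-\,n^{\zeta_{a,b}(\theta)}\leq F_n(\theta)\leq c_+\,n^{\zeta_{a,b}(\theta)}$ uniformly on compacts of $\theta\in(-\infty,\lambda_k(\beta_{a,b}))$ is the hard part; any such two-sided estimate is enough to yield $\psi_n^k(\theta)\to\zeta_{a,b}(\theta)$. I would proceed by constructing explicit super- and sub-solutions $G_n^\pm=C_\pm n^{\zeta_{a,b}(\theta)}(1\pm\varepsilon_n)$ to the recursion, with a subpolynomial correction $\varepsilon_n\to 0$, and then invoke a discrete maximum-principle argument on the linear triangular system to deduce $G_n^-\leq F_n\leq G_n^+$ for $n\geq n_0$. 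This reduces the whole problem to a sharp asymptotic expansion of
\[
S_n(\zeta) := \sum_{j=2}^n \binom{n}{j}\lambda_{n,j}(\beta_{a,b})\,(n-j+1)^{\zeta},
\]
uniform in $\zeta$ on compacts of $(-b,\infty)$; this is exactly the content of understanding the quotients $\lambda_n(\beta_{a,b+\zeta})/\lambda_n(\beta_{a,b})$ to subleading order, which is the role played by the appendix lemmas on $\lambda_n(\beta_{a,b})$. I expect to split the sum into small-jump ($j$ bounded) and macroscopic-jump ($j$ of order $n$) contributions, use Stirling asymptotics for $\binom{n}{j}B(a+j-2,b+n-j)$ in the first regime and a Laplace-method treatment of the integral representation in the second; the technical care lies in ensuring that errors are uniformly $o(1)$ at the scale $n^{\zeta_{a,b}(\theta)}$, and in stitching together the cases $a\in(1,2)$, $a=2$, $a>2$ (each with its own leading behaviour). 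The divergence $\psi_n^k(\theta)\to+\infty$ for $\theta\geq\lambda_k(\beta_{a,b})$ is comparatively easy: the chain visits state $k$ with probability bounded below in $n$ (by repeated pairwise mergers of rate $\binom{p}{2}\lambda_{p,2}(\beta_{a,b})>0$), and the sojourn time at $k$ is exponential of rate $\lambda_k(\beta_{a,b})$, so $\mathbb{E}_{\beta_{a,b}}[e^{\theta T_n^k}]=+\infty$ there.
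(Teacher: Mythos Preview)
Your strategy and the identification of the limiting cumulant generating function are essentially the paper's approach: write $T_n^k=\int_0^{\tau_n}\mathbf{1}_{|\Pi^n_s|\geq k}\dd s$, compute the limit of the rescaled log-Laplace transform, and feed it into G\"artner--Ellis. Your super/sub-solution scheme for the recursion is close in spirit to what the paper does, though the paper makes this concrete via a neat trick: it \emph{prescribes} the target sequence $u_n=\Gamma(b+\ell+n-1)\Gamma(b)/\Gamma(b+n-1)\Gamma(b+\ell)\sim c\,n^{\ell}$, solves the recursion backwards for the corresponding $\psi_{a,b,\ell}$, and then shows (via the quotient $\lambda_n(\beta_{a,b+\ell})/\lambda_n(\beta_{a,b})$, exactly the ``statistical mechanics'' observation) that $\psi_{a,b,\ell}(n)\to L_{a,b}(\ell)$. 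A monotonicity lemma then squeezes any $\psi$ with limit $c$ between two such exact solutions. This avoids the delicate uniform error control you anticipate in your splitting-and-Laplace plan.

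\medskip

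There is, however, a genuine gap in your lower bound for $x\geq x_k$. Your claim that such $x$ is an exposed point of $\mathcal{I}^k_{a,b}$ with exposing hyperplane of slope $\lambda_k(\beta_{a,b})$ fails on two counts. First, since $\mathcal{I}^k_{a,b}$ is \emph{affine} with slope $\lambda_k(\beta_{a,b})$ on $[x_k,\infty)$, the strict inequality required in the definition of exposed point (Dembo--Zeitouni, Definition~2.3.3) fails for every $y\in[x_k,\infty)\setminus\{x\}$. Second, the G\"artner--Ellis lower bound (their Theorem~2.3.6(b)) demands that the exposing hyperplane lie in the \emph{interior} of $\{\theta:\Lambda(\theta)<\infty\}=(-\infty,\lambda_k(\beta_{a,b}))$, so $\theta=\lambda_k(\beta_{a,b})$ is not admissible anyway. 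In short, the set of usable exposed points is exactly $(0,x_k)$, and G\"artner--Ellis alone yields the lower bound only on this interval.

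The paper closes this gap by a separate ``manual'' argument. For $z>x_k$, one conditions on the event $A^n_k$ that the block-counting process actually visits level $k$; under this conditioning, $T_n^k$ decomposes as an independent sum of the exponential sojourn $e_k$ at level $k$ (rate $\lambda_k(\beta_{a,b})$) and a residual $F_k$ distributed as $\int_0^{\tau_n}\mathbf{1}_{|\Pi^n_s|\geq k+1}\dd s$ conditional on $A^n_k$. One then bounds
\[
\mathbb{P}\!\left(\tfrac{T_n^k}{\log n}\approx z\right)\;\geq\;\mathbb{P}(A^n_k)\cdot\mathbb{P}\!\left(\tfrac{F_k}{\log n}\approx x_k\,\Big|\,A^n_k\right)\cdot\mathbb{P}\!\left(\tfrac{e_k}{\log n}\approx z-x_k\right).
\]
The first factor is subpolynomial (its log is $o(\log n)$), the third contributes $e^{-\lambda_k(\beta_{a,b})(z-x_k)\log n}$ by an explicit calculation, and the second is handled by a \emph{conditional} version of the cumulant asymptotics (with cutoff now at $\lambda_{k+1}(\beta_{a,b})>\lambda_k(\beta_{a,b})$), so that $x_k$ sits inside the enlarged exposed set $(0,x_{k+1})$ and the G\"artner--Ellis lower bound \emph{does} apply to it. Summing the three exponents reproduces exactly $\mathcal{I}_{a,b}(x_k)+\lambda_k(\beta_{a,b})(z-x_k)=\mathcal{I}^k_{a,b}(z)$. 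This decomposition, and the need for the conditional integral-functional estimate, are the missing ingredients in your proposal.
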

In the light of Theorem \ref{thmpgdlevelk}, we can offer the following explanation for the cut-off of the rate function at $x_2=\zeta'_{a,b}(\lambda_2(\beta_{a,b}))$ in Theorem \ref{thmpgd}. For $x \leq x_2$, it is only extreme behaviors of $(|\Pi^n_t|)_{t \geq 0}$, while $|\Pi^n_t|$ is still large, that are responsible for the large deviations of the type $\tau_n \approx x \log n$. In contrast, for $x \geq x_2$, excessive values of the holding time of $(|\Pi^n_t|)_{t \geq 0}$ at $2$ also become a source of large deviations of the type $\tau_n \approx x \log n$. 

As one naturally expects, the large deviation principle from Theorem \ref{thmpgd} allows, in the case of the Beta-coalescent, to recover the classical law of large numbers \eqref{lgnclassical}. Indeed, we see from \eqref{defiaslegendrezeta} that $\mathcal{I}_{a,b}(x)\leq 0$ if and only if $\zeta_{a,b}(\theta)\geq \theta x$ for all $\theta \in \mathbb{R}$ and, since $\zeta_{a,b}(0)=0$ and $\zeta_{a,b}(\cdot)$ is convex, this occurs only for $x=\zeta'_{a,b}(0)$. Since $x_k=\zeta'_{a,b}(\lambda_k(\beta_{a,b}))>\zeta'_{a,b}(0)$, we obtain that for any $k \geq 2$, $\mathcal{I}^k_{a,b}(\zeta'_{a,b}(0))=0$ and $\mathcal{I}^k_{a,b}(x)>0$ for all $x \neq \zeta'_{a,b}(0)$. For any $\epsilon>0$, setting $F_{\epsilon}:=(-\infty,\zeta'_{a,b}(0)-\epsilon] \cup [\zeta'_{a,b}(0)+\epsilon,\infty)$ we have $\inf_{x \in F_{\epsilon}} \mathcal{I}^2_{a,b}(x)>0$ so, applying \eqref{ldpupperbound} (or more generally \eqref{ldplowerboundlvlk}) with $F=F_{\epsilon}$ we obtain that, under $\mathbb{P}_{\beta_{a,b}}$, the following convergences in probability holds, 
\begin{align}
\frac{\tau_n}{\log n} \overset{\mathbb{P}}{\underset{n \rightarrow \infty}{\longrightarrow}} \zeta'_{a,b}(0), \qquad \forall k \geq 2, \ \frac{T_n^k}{\log n} \overset{\mathbb{P}}{\underset{n \rightarrow \infty}{\longrightarrow}} \zeta'_{a,b}(0). \label{lgn}
\end{align}
It is justifies in Appendix \ref{appendlgn} that the limit $\zeta'_{a,b}(0)$ from \eqref{lgn} indeed coincides with the limit $1/\mu(\beta_{a,b})$ from the classical result \eqref{lgnclassical}. It is worth noting from \eqref{lgn} and Theorem \ref{thmpgdlevelk} that, interestingly, the hitting times $T_n^k$ (that include $\tau_n$) all satisfy the same law of large numbers but different large deviation principles, as the functions $\mathcal{I}^k_{a,b}(\cdot)$ differ for different values of $k$ but they all have $\zeta'_{a,b}(0)$ as their unique zero. In particular, the large deviation principles from Theorem \ref{thmpgd} contain strictly more information on the behavior of the system than the laws of large numbers \eqref{lgn}. 

\begin{remark} \label{massesofblocks}
It is well-known (see for example \cite[Cor. 3]{pitman1999}) that the $\Lambda$-coalescent $(\Pi^{\infty}_t)_{t\geq 0}$ starting with infinitely many blocks is well-defined as a process on partitions of $\{1,2,\ldots\}$. For each block in $B \in \Pi^{\infty}_t$, its \textit{mass} (also called \textit{asymptotic frequency}) is defined as $|\mathcal{B}|:=\lim_{n \rightarrow \infty}\sharp (\mathcal{B} \cap \{1,\ldots,n\})/n$. It is shown in \cite[Prop. 2.13]{labbe2014a} that blocks masses are well-defined. Studying the masses of blocks at time $t$ when $t$ is large or small, and in particular of the largest block, has attracted some interest \cite{ref19surveyb,vechwkt}. We denote by $W_k(t)$ the mass of the $k^{th}$ largest block at time $t$. It is clear that, as $t$ goes to infinity, one large block occupies a proportion of the mass increasing to $1$ as other massive blocks progressively merge with it, thus $W_1(t) \rightarrow 1$. It is not difficult to see (see for example \cite[Prop 1.6]{vechwkt}) that the absorption time $\tau_n$ is related to the block sizes by $\mathbb{P} (\tau_n \leq t) = \sum_{k \geq 1} \mathbb{E} [W_k(t)^n]$. Therefore, the large deviation principle from Theorem \ref{thmpgd} allows to deduce interesting information on the large deviation principle satisfied by $-\log(1-W_1(t))/t$. Since this goes beyond the scope of the present paper we do not explore this aspect further here. 
\end{remark}

Since Theorem \ref{thmpgd} is included in Theorem \ref{thmpgdlevelk} we only prove the latter (in Section \ref{proofldp}). The most important step in the proof of Theorem \ref{thmpgdlevelk} is to understand the behavior of the Laplace transforms of integral functionals of the $\Lambda$-coalescent. Up to our knowledge, these objects have not been studied in details so far. Our results on these objects are presented in the following section. Once the behavior of these Laplace transforms is established, we can apply the famous G\"artner-Ellis Theorem to obtain the upper bound \eqref{ldpupperboundlvlk} and a partial version of the lower bound \eqref{ldplowerboundlvlk}. Then, a technical difficulty arises that is related to the cut-off of the rate function $\mathcal{I}^k_{a,b}(\cdot)$, but we can complete "manually" the proof of the lower bound \eqref{ldplowerboundlvlk}. 

\subsection{Integral functionals of the Beta-coalescent} \label{sectionintfunct}

For any $\psi:\{2,3,\ldots\} \rightarrow \mathbb{R}$ the $\psi$-integral functional of the $\Lambda$-coalescent is defined by $\int_0^{\tau_n} \psi(|\Pi^n_s|) \dd s$. The object of this section is to provide results for the asymptotic behavior of their Laplace transforms: 
\begin{align}
\Es^{\Lambda}_{n}(\theta \psi):=\E_{\Lambda} [ e^{\theta \int_0^{\tau_n} \psi(|\Pi^n_s|) \dd s} ] \in (0, \infty], \ n \geq 1. \label{defexpfunctional}
\end{align}
Note that $\Es^{\Lambda}_{1}(\theta \psi)=1$. 
For any $j \geq 2$, whenever the state $j$ is visited by the process $(|\Pi^n_s|)_{s \geq 0}$, then the holding time at $j$ follows an exponential distribution with parameter $\lambda_j(\Lambda)$. In particular, if $\theta \psi(j) \geq \lambda_j(\Lambda)$ for some $j \geq 2$ we have $\Es^{\Lambda}_{n}(\theta \psi)=\infty$ for all $n\geq j$. To ensure that $\Es^{\Lambda}_{n}(\theta \psi)<\infty$ for all $n$ we will often assume that the couple $(\psi,\theta)$ satisfies 
\begin{align}
\forall j \geq 2, \ \theta \psi(j) < \lambda_j(\Lambda). \label{condlaplacefinitenosign}
\end{align}
In the case where $\psi(\cdot)$ only takes non-negative values, we often assume that 
\begin{align}
\theta < \inf \{ \lambda_j(\Lambda)/\psi(j); j \geq 2, \psi(j)>0 \} =: M^{\Lambda}(\psi). \label{condlaplacefinite}
\end{align}
For $\psi(\cdot)$ non-negative, \eqref{condlaplacefinite} clearly implies \eqref{condlaplacefinitenosign} and both assumptions are satisfied when $\theta<0$. It will be specified explicitly when our choices of $\psi(\cdot)$ are non-negative. 
\begin{remark}
If $\psi(k)=1$ for all $k \geq 2$ then $\tau_n=\int_0^{\tau_n} \psi(|\Pi^n_s|) \dd s$ so $\theta \mapsto \Es^{\Lambda}_{n}(\theta \psi)$ is the Laplace transform of the absorption time $\tau_n$. This Laplace transform already appeared in \cite[Eq. (32)-(33)]{pitman1999}. In this case, since the sequence $(\lambda_k(\Lambda))_{k \geq 2}$ is increasing (see \eqref{deflambdan}), we have $M^{\Lambda}(\psi)=\lambda_2(\Lambda)$. 
\end{remark}

The following result studies, in the case where $\Lambda=\beta_{a,b}$ for some $a,b>0$, the polynomial growth and decay of $\Es^{\Lambda}_{n}(\theta \psi)$ as $n \to \infty$. 
\begin{theo} \label{asymptfunctional}
Let us fix $\psi:\{2,3,\ldots\} \rightarrow [0,\infty)$ (i.e. $\psi(\cdot)$ is non-negative). 
\begin{itemize}
\item Let $a \in (0,1)$, $b>0$. If there is a constant $c>0$ such that 
\begin{align}
\psi(n) \underset{n \rightarrow \infty}{\sim} c \ n^{1-a}, \label{condpsi0}
\end{align}
then, for any $\theta \in (-b\Gamma(a)/c(2-a)(1-a),M^{\beta_{a,b}}(\psi))$,  
\begin{align}
\frac{\log \Es^{\beta_{a,b}}_{n}(\theta \psi)}{\log n} = \frac{\log \E_{\beta_{a,b}} [ e^{\theta \int_0^{\tau_n} \psi(|\Pi^n_s|) \dd s} ]}{\log n} \underset{n \rightarrow \infty}{\longrightarrow} \frac{\theta c(2-a)(1-a)}{\Gamma(a)}. \label{asymptfunctional0}
\end{align}
\item Let $a=1$, $b>0$. If there is a constant $c>0$ such that 
\begin{align}
\psi(n) \underset{n \rightarrow \infty}{\sim} c \log n, \label{condpsi01}
\end{align}
then, for any $\theta \in (-b/c,M^{\beta_{a,b}}(\psi))$,  
\begin{align}
\frac{\log \Es^{\beta_{a,b}}_{n}(\theta \psi)}{\log n} = \frac{\log \E_{\beta_{a,b}} [ e^{\theta \int_0^{\tau_n} \psi(|\Pi^n_s|) \dd s} ]}{\log n} \underset{n \rightarrow \infty}{\longrightarrow} \theta c. \label{asymptfunctional01}
\end{align}
\item Let $a>1$, $b>0$. If there is a constant $c>0$ such that 
\begin{align}
\psi(n) \underset{n \rightarrow \infty}{\longrightarrow} c, \label{condpsi02}
\end{align}
then, for any $\theta \in (-\infty,M^{\beta_{a,b}}(\psi))$,  
\begin{align}
\frac{\log \Es^{\beta_{a,b}}_{n}(\theta \psi)}{\log n} = \frac{\log \E_{\beta_{a,b}} [ e^{\theta \int_0^{\tau_n} \psi(|\Pi^n_s|) \dd s} ]}{\log n} \underset{n \rightarrow \infty}{\longrightarrow} \zeta_{a,b}(\theta c), \label{asymptfunctional02}
\end{align}
where $\zeta_{a,b}(\cdot)$ is defined in Section \ref{sectionldpmain}. 
\end{itemize}
\end{theo}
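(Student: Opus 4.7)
The strategy is a spectral/Feynman--Kac analysis of $f_n := \Es^{\beta_{a,b}}_n(\theta\psi)$ via explicit test functions for the generator of the block-counting process $(|\Pi^n_s|)_{s \geq 0}$, which I will denote by $\mathcal{A}$. Conditioning on the first jump from state $n$ yields the recursion
\[
(\lambda_n(\beta_{a,b}) - \theta \psi(n)) f_n = \sum_{\ell=2}^n \binom{n}{\ell}\lambda_{n,\ell}(\beta_{a,b}) f_{n-\ell+1}, \qquad f_1 = 1,
\]
which is equivalent to the eigenproblem $\mathcal{A} f + \theta\psi f = 0$, $f(1) = 1$. It is therefore enough to produce matching upper and lower test functions with the right polynomial growth exponent.

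In the main case $a > 1$ (where $\psi(n) \to c$), I would construct, for each $y \in (-b,\infty)$, an explicit test function $h_y$ built from Gamma function ratios --- the form being dictated by the combination $\Gamma(b+y)/\Gamma(a+b+y-2)$ appearing in $L_{a,b}$ --- satisfying $h_y(n) \sim C_y\, n^y$ and
\[
\frac{\mathcal{A} h_y(n)}{h_y(n)} \underset{n\to\infty}{\longrightarrow} -L_{a,b}(y).
\]
This identity is the core computation and is verified by expanding $\mathcal{A} h_y(n)$ as a binomial--beta sum (using the closed form $\lambda_{n,\ell}(\beta_{a,b}) = B(a+\ell-2, b+n-\ell)$) and invoking the asymptotic expansions announced in Section \ref{intro} for the ratios $\lambda_n(\beta_{a,b'})/\lambda_n(\beta_{a,b})$, which play the role of a ``partition-function ratio'' that reveals the eigenvalue $-L_{a,b}(y)$. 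Fixing $\epsilon > 0$ and setting $y_\pm := \zeta_{a,b}(\theta c) \pm \epsilon$, the strict monotonicity of $L_{a,b}$ gives $L_{a,b}(y_-) < \theta c < L_{a,b}(y_+)$, so that for $n$ large
\[
\mathcal{A} h_{y_+}(n) + \theta\psi(n) h_{y_+}(n) \leq 0 \leq \mathcal{A} h_{y_-}(n) + \theta\psi(n) h_{y_-}(n).
\]
Applying optional stopping at $\tau_n$ to the resulting sub-/supermartingales $e^{\theta \int_0^{t} \psi(|\Pi^n_s|) ds} h_{y_\pm}(|\Pi^n_t|)$ sandwiches $f_n$ between constant multiples of $h_{y_-}(n)$ and $h_{y_+}(n)$. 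Taking logarithms, dividing by $\log n$, and letting $\epsilon \to 0$ delivers \eqref{asymptfunctional02}.

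The two cases $a \in (0,1)$ and $a=1$ fit the same template with a simplified ansatz reflecting the linearity in $\theta$ of the target exponent: one looks for $h_y$ with $h_y(n) \sim n^y$ and $y$ linear in $\theta$, and matches the asymptotic eigenvalue $\mathcal{A} h_y(n)/h_y(n)$ against the leading order of $\theta\psi(n)$, using the asymptotic $\lambda_n(\beta_{a,b}) \sim \Gamma(a) n^{2-a}/(2-a)$ in the range $a \in (0,2)$ (and the corresponding $n\log n$ asymptotic for $a=1$). The lower thresholds $\theta > -b\Gamma(a)/(c(2-a)(1-a))$ and $\theta > -b/c$ appearing in the hypotheses are precisely the values below which $h_y$ would cease to be positive on all of $\{1,2,\ldots\}$. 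The main obstacle is the construction and verification of the test functions $h_y$ in the $a>1$ case, together with the asymptotic eigenvalue identity $\mathcal{A} h_y/h_y \to -L_{a,b}(y)$; this hinges on a careful asymptotic expansion of binomial--beta sums and will likely require a separate analysis in the sub-case $a > 2$, where $D_{a,b}$ becomes finite and $L_{a,b}$ is bounded above, altering the qualitative behavior of the test function at large $y$.
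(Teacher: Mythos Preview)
Your proposal is essentially correct and follows the same strategy as the paper, only phrased in spectral/martingale language rather than the paper's ``target sequence'' language. The paper fixes the target $u_n = \Gamma(b+\ell+n-1)\Gamma(b)/(\Gamma(b+n-1)\Gamma(b+\ell)) \sim C_\ell\, n^\ell$ and back-solves the recursion to obtain an \emph{exact} $\psi_{a,b,\ell}$ with $\Es^{\beta_{a,b}}_n(\mathrm{sgn}(\ell)\psi_{a,b,\ell}) = u_n$; the core computation (their Lemma~3.5) is precisely your eigenvalue identity $\mathcal{A}h_\ell/h_\ell \to -L_{a,b}(\ell)$, and it is obtained exactly as you anticipate, via the miraculous reduction to the partition-function ratio $\lambda_n(\beta_{a,b+\ell})/\lambda_n(\beta_{a,b})$ (their display (3.7)). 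The sandwiching step is then done by an elementary inductive comparison principle on the recursion (their Lemma~3.2) rather than by optional stopping; this is a slightly safer formalization since your sub-/supermartingale inequalities only hold for large $n$, so a raw optional-stopping argument would need a cutoff at a finite level, which is what the induction constant $C$ in the paper absorbs. Your reading of the lower thresholds on $\theta$ as the positivity range of $h_y$ (equivalently $\ell > -b$) is also exactly how they arise in the paper.
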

We note that, even though $\zeta_{a,b}(\theta c)$ is well-defined for $\theta \in \mathbb{R}$, the asymptotic \eqref{asymptfunctional02} requires $\theta$ to be smaller than $M^{\beta_{a,b}}(\psi)$. Indeed, if $\theta>M^{\beta_{a,b}}(\psi)$, then it follows from the discussion before \eqref{condlaplacefinitenosign} that $\Es^{\beta_{a,b}}_{n}(\theta \psi)=\infty$ for all large $n$. This threshold is thus not an imperfection of our results but it is actually deeply related to the large deviations of the integral functionals $\int_0^{\tau_n} \psi(|\Pi^n_s|) \dd s$ and it results in the cut-off of the rate functions observed in Theorems \ref{thmpgd} and \ref{thmpgdlevelk}. 


The estimates from Theorem \ref{asymptfunctional} provides equivalents only for $\log \Es^{\beta_{a,b}}_{n}(\psi)$. When $a \in (0,1)$ it will be useful to strengthen \eqref{asymptfunctional0} into direct inequalities, provided that $\psi$ satisfies a slightly stronger assumption. This is the object of the following result. Before stating it, let us notice that, the asymptotic of $\lambda_n(\beta_{a,b})$ being given by Lemma \ref{equivlambdanab}, the condition \eqref{condpsi0} can be reformulated as "$\psi(n)/\lambda_n(\beta_{a,b}) \sim d/n$ as $n \to \infty$" 
where $c$ and $d$ are related by $c=d \ \Gamma(a)/(2-a)$. 
\begin{theo} \label{asymptfunctionalbis}
Let $a \in (0,1)$, $b>0$ and $\psi:\{2,3,\ldots\} \rightarrow [0,\infty)$ (i.e. $\psi(\cdot)$ is non-negative). If there is a constant $d \in (0,b/(1-a))$ such that 
\begin{align}
\sum_{n \geq 2} \left | \frac{\psi(n)}{\lambda_n(\beta_{a,b})} - \frac{d}{n} \right | < \infty,  \label{condpsiprecise0}
\end{align}
then there are constants $K_{+},K_{-}\in(0,\infty)$ such that for all $n \geq 1$, 
\begin{align}
\frac{K_{-}}{n^{d(1-a)}} \leq \Es^{\beta_{a,b}}_{n}(-\psi) \leq \frac{K_{+}}{n^{d(1-a)}}. \label{asymptfunctionalprecise0}
\end{align}
\end{theo}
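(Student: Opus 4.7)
The plan is to extract sharp two-sided bounds on $E_n := \Es^{\beta_{a,b}}_n(-\psi)$ from the Markov structure of the block-count chain $(|\Pi^n_s|)_{s\geq 0}$. By conditioning on the first jump and invoking the strong Markov property, $E_n$ satisfies
\begin{align*}
E_n = \frac{\lambda_n(\beta_{a,b})}{\lambda_n(\beta_{a,b}) + \psi(n)}\sum_{k=1}^{n-1} q_{n,n-k}\,E_{n-k}, \quad E_1 = 1,
\end{align*}
with embedded transition probabilities $q_{n,m} = \binom{n}{n-m+1}\lambda_{n,n-m+1}(\beta_{a,b})/\lambda_n(\beta_{a,b})$. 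Iterating yields the product representation
\begin{align*}
E_n = \E_{\beta_{a,b}}\biggl[\prod_{j\in V_n\cap\{2,3,\ldots\}}\frac{1}{1+r_j}\biggr], \quad r_j:= \frac{\psi(j)}{\lambda_j(\beta_{a,b})},
\end{align*}
where $V_n$ is the random set of block-counts visited by $(|\Pi^n_s|)_{s \geq 0}$.

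First I would replace each factor $(1+r_j)^{-1}$ by $e^{-d/j}$ at bounded multiplicative cost. Condition \eqref{condpsiprecise0} forces $r_j \to 0$; combining $|\log(1+r_j) - r_j| = O(r_j^2) = O(1/j^2)$ with the summability of $|r_j - d/j|$ gives $S := \sum_{j \geq 2}|\log(1+r_j) - d/j| < \infty$. Consequently
\begin{align*}
e^{-S} u_n \leq E_n \leq e^{S} u_n,\quad u_n := \E_{\beta_{a,b}}\biggl[\prod_{j\in V_n\cap\{2,3,\ldots\}} e^{-d/j}\biggr],
\end{align*}
and it suffices to prove $K'_{-}/n^{d(1-a)}\leq u_n\leq K'_{+}/n^{d(1-a)}$. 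The quantity $u_n$ depends only on $a,b,d$ and, by conditioning on the first jump, satisfies the Feynman--Kac-type recursion $u_n = e^{-d/n}\sum_k q_{n,n-k}\,u_{n-k}$ with $u_1 = 1$.

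Second, I would make the ansatz $\tilde u_n := n^{-d(1-a)}$ and study $v_n := u_n/\tilde u_n$. The recursion rewrites as $v_n = A_n \sum_k \tilde q_{n,n-k}\,v_{n-k}$, where
\begin{align*}
A_n := e^{-d/n}\sum_{k=1}^{n-1} q_{n,n-k}\biggl(\frac{n}{n-k}\biggr)^{d(1-a)} = e^{-d/n}\,\E\bigl[(n/J_n)^{d(1-a)}\bigr],
\end{align*}
and $\tilde q_{n,m} \propto q_{n,m}(n/m)^{d(1-a)}$ defines a tilted jump law. Iterating, $v_n = \tilde{\E}\bigl[\prod_{j\in \tilde V_n\cap\{2,\ldots\}} A_j\bigr]$, so uniform positive two-sided bounds on $v_n$ (hence on $u_n/\tilde u_n$, hence on $E_n/\tilde u_n$) would follow from the absolute summability $\sum_{n\geq 2}|\log A_n| < \infty$.

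The main technical obstacle is proving this summability. Using the integral representation $\lambda_{n,k+1}(\beta_{a,b}) = \int_0^1 r^{a+k-2}(1-r)^{b+n-k-2}\dd r$ together with the binomial identities $\sum_j j^i \binom{n}{j}r^j(1-r)^{n-j}$ for $i=1,2$, one can evaluate the moments $\sum_k k\,q_{n,n-k}$ and $\sum_k k^2\,q_{n,n-k}$ as $r$-integrals; a Laplace analysis near $r=0$ using the asymptotic $\lambda_n(\beta_{a,b})\sim \Gamma(a)n^{2-a}/(2-a)$ from Lemma \ref{equivlambdanab} yields $\E[K_n]\to 1/(1-a)$ with $K_n:=n-J_n$, and controls the higher-moment contributions to $\E[(1-K_n/n)^{-d(1-a)}]$. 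Combined with a careful treatment of the tail region $K_n\sim n$, where the assumption $d(1-a)<b$ enters to ensure integrability of $(1-\xi)^{b-2-d(1-a)}$ in the relevant density of $\xi=K_n/n$ near $\xi=1$, one extracts an expansion
\begin{align*}
\E\bigl[(n/J_n)^{d(1-a)}\bigr] = 1 + \frac{d}{n} + \varepsilon_n, \qquad \sum_{n\geq 2}|\varepsilon_n| < \infty,
\end{align*}
which, after cancellation with $e^{-d/n} = 1 - d/n + O(1/n^2)$, gives $A_n - 1$ summable and hence the desired two-sided bounds. The hard part is this last step: whereas Theorem \ref{asymptfunctional} only needs the partial sums $\sum_{j\leq n}|\log A_j|$ to be $o(\log n)$, the sharp bound here requires full convergence of $\sum_n|\log A_n|$, for which one must resolve the two leading-order terms of $A_n$ and extract their cancellation, using the finer asymptotic expansions of $\lambda_n(\beta_{a,b})$ and of the quotients $\lambda_n(\beta_{a,b'})/\lambda_n(\beta_{a,b})$ developed in Appendix \ref{coeflambdanab}.
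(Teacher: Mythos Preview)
Your strategy is sound in outline, but it diverges substantially from the paper's proof, and the step you flag as hard is exactly the one the paper's method is designed to avoid. You reduce everything to proving $\sum_{n\geq 2}|\log A_n|<\infty$ where $A_n = e^{-d/n}\,\E_{\beta_{a,b}}\bigl[(n/J_n)^{d(1-a)}\bigr]$; this amounts to choosing the pure power $\tilde u_j = j^{-d(1-a)}$ as your comparison sequence and then having to establish the second-order expansion $\E\bigl[(n/J_n)^{d(1-a)}\bigr]=1+d/n+\varepsilon_n$ with $\sum|\varepsilon_n|<\infty$ by direct moment analysis. That expansion is plausible but is not a triviality: it requires not just $\E[K_n]\to 1/(1-a)$ but the rate of that convergence, uniform control of higher moments, and the tail integrability you identify near $K_n\approx n$. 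You describe the ingredients but do not carry out the computation, so as written this is a sketch with a genuine gap at the decisive point.

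The paper's proof is both shorter and conceptually different. Rather than comparing $\psi$ to a function whose associated functional must then be estimated, it compares $\psi$ to the special function $\psi_{a,b,\ell}$ (with $\ell=-d(1-a)$) of Lemma~\ref{targetsequence}, which is engineered so that $\Es^{\beta_{a,b}}_n(-\psi_{a,b,\ell})$ is \emph{exactly} the Gamma ratio $\Gamma(b+\ell+n-1)\Gamma(b)/\bigl(\Gamma(b+n-1)\Gamma(b+\ell)\bigr)\sim c\,n^{\ell}$. The point is that taking the target sequence to be a Gamma ratio instead of a pure power makes the identity \eqref{exprlambdanj} available, which converts the sum defining your $A_n$-analogue into the closed form $\lambda_n(\beta_{a,b+\ell})/\lambda_n(\beta_{a,b})$ times explicit Gamma factors; then the asymptotic \eqref{asympquotient0} reads off directly from Lemma~\ref{equivlambdanab} that $\psi_{a,b,\ell}$ itself satisfies \eqref{condpsiprecise0} with the same $d$. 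The stability Lemma~\ref{stabilityprinciple} (which is essentially your first step) then transfers the exact bounds to any $\psi$ satisfying \eqref{condpsiprecise0}. In short: the paper never needs your summability of $\log A_n$, because its choice of comparison sequence makes the corresponding quantity computable in closed form. If you want to push your route through, the cleanest way is probably to replace $\tilde u_j=j^{-d(1-a)}$ by the Gamma ratio above, at which point your $A_n$ becomes exactly $\bigl(1+\psi_{a,b,\ell}(n)/\lambda_n(\beta_{a,b})\bigr)^{-1}$ times $e^{-d/n}$-type corrections, and the paper's Lemma~\ref{behavspecialpsi} gives you the summability for free.
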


As mentioned in Section \ref{sectionldpmain}, Theorem \ref{asymptfunctional} is a key step in the proof of Theorems \ref{thmpgd}-\ref{thmpgdlevelk}. Theorems \ref{asymptfunctional}-\ref{asymptfunctionalbis} are proved in Section \ref{studyintfct}. We believe that it would not be easy to prove them by a direct study the asymptotic behavior of $\Es^{\Lambda}_{n}(\theta \psi)$ for a given choice of $\psi$. Instead, our strategy is, in Section \ref{changeofmeasuretrick}, to determine target sequences $u=(u_n)_{n \geq 1}$ with known asymptotic behaviors and to identify them as $(\Es^{\Lambda}_{n}(\theta \psi))_{n \geq 1}$ for careful choices of $\psi$. Then, a key step is to determine the asymptotic behavior of these $\psi$ associated to the target sequences. For this, we introduce a trick that is inspired from statistical mechanics (see Remark \ref{statmechtrick} below). We thus obtain a collection of particular cases of $\psi$ that are non-explicit but for which both the asymptotic behaviors of $\psi(n)$ and $\Es^{\Lambda}_{n}(\theta \psi)$ are explicit. Fortunately, this collection of particular cases covers all the behaviors considered in Theorems \ref{asymptfunctional}-\ref{asymptfunctionalbis}, so we can conclude using some monotonicity and stability properties of $\psi \mapsto (\Es^{\Lambda}_{n}(\theta \psi))_{n \geq 1}$ that are established in Section \ref{usefulprop}. 

\begin{remark}[An idea from statistical mechanics] \label{statmechtrick}
In statistical mechanics (see for example \cite{friedlivelenik2017}) a system of size $n$ is considered under a probability measure called \textit{Gibbs distribution} that involves a renormalization constant $Z_n$ called \textit{partition function}. The latter quantity is of great importance, since the Laplace transforms of many relevant quantities can be expressed as a quotient $\tilde Z_n/Z_n$, where $\tilde Z_n$ is the partition function of the system under a change of parameters. Therefore understanding the asymptotic behavior of the partition functions $Z_n$ brings considerable information on the behavior of the system when $n$ is large. In our case, thinking of the quantity $\lambda_n(\beta_{a,b})$ as similar to a partition function, it turns out that, somehow miraculously, we can choose the target sequence $\Es^{\beta_{a,b}}_{n}(\theta \psi)$ so that the corresponding $\psi(n)$ can be expressed in terms of a quotient $\lambda_n(\Lambda_{a,b'})/\lambda_n(\beta_{a,b})$ for some $b'$. Then, as in statistical mechanics, the problem of understanding the behavior of $\psi(n)$ is reduced to determining precisely the asymptotic behavior of quantities $\lambda_n(\beta_{a,b})$, which is very easy to do (see Appendix \ref{coeflambdanab}). 
\end{remark}

The estimates from this section are not a mere lemma for the proof of Theorems \ref{thmpgd}-\ref{thmpgdlevelk}, but they actually have other interesting applications. Indeed, if we enlarge the probability space by letting some event occur at rate $\psi(k)$ when the $\Lambda$-coalescent process $\Pi^n$ has currently $k$ blocks, then $\Es^{\Lambda}_{n}(-\psi)$ represents the probability that the event has not occurred before the absorption time $\tau_n$. Therefore, Theorems \ref{asymptfunctional}-\ref{asymptfunctionalbis} provide us with information on the decay rate of such probabilities. We use this, when $a \in (0,1)$, to study the Kolmogorov distance between $\tau_n$ and its limit distribution. Our results in that direction are presented in the following section. 

\subsection{Convergence rate for the absorption time} \label{cvkolcasecdi}

This section is dedicated to a second application of the results from Section \ref{sectionintfunct}. In the case where the $\beta_{a,b}$-coalescent comes down from infinity, that is, when $a \in (0,1)$, the following result provides a bound for the convergence in the Kolmogorov distance of $\tau_n$ toward its limit distribution. Let us recall that, for two distributions $\mu,\nu$ on $[0,\infty)$ with distribution functions $F_{\mu}, F_{\nu}$, the \textit{Kolmogorov distance} between their distributions is given by 
\begin{align}
d_K(\mu,\nu) := \sup_{t \in [0,\infty)} |F_{\mu}(t)-F_{\nu}(t)|. \label{defkolmogorovdist}
\end{align}
Let us denote by $\rho_n(\Lambda)$ the distribution of $\tau_n$ under $\mathbb{P}_{\Lambda}$ and by $\rho_{\infty}(\Lambda)$ its limit distribution. The latter exists when the $\Lambda$-coalescent comes down from infinity which, for $\Lambda=\beta_{a,b}$, is equivalent to $a \in (0,1)$. The main result of this subsection can be stated as follows. 
\begin{theo} \label{boundkolmdistmain}
Let $a \in (0,1)$ and $b>1-a$. Then there is a constant $C(a,b)\in(0,\infty)$ such that for any $n \geq 1$, 
\begin{align}
d_K(\rho_n(\beta_{a,b}),\rho_{\infty}(\beta_{a,b})) \leq \frac{C(a,b)}{n^{1-a}}. \label{boundkolmdisteq}
\end{align}
\end{theo}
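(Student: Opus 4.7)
The plan is to combine the restriction coupling between $\Pi^n$ and the $\beta_{a,b}$-coalescent $\Pi^\infty$ started from infinitely many blocks with Theorem~\ref{asymptfunctionalbis} applied at the critical value $d=1$, which is permitted exactly under the hypothesis $b>1-a$. First I would realize $\Pi^n$ as the restriction to $\{1,\ldots,n\}$ of a $\beta_{a,b}$-coalescent $\Pi^\infty$ on $\mathbb{N}$ starting from singletons. Since $a\in(0,1)$, this coalescent comes down from infinity and $\tau_\infty$ is almost surely finite with law $\rho_\infty(\beta_{a,b})$; the coupling gives $\tau_n\le\tau_\infty$ a.s., so writing $F_n$ and $F_\infty$ for the distribution functions of $\tau_n$ and $\tau_\infty$ we obtain $F_n\ge F_\infty$ pointwise and
\[
F_n(t)-F_\infty(t)=\mathbb{P}_{\beta_{a,b}}(\tau_n\le t<\tau_\infty)\le\mathbb{P}_{\beta_{a,b}}(\tau_n<\tau_\infty),\qquad t\ge 0.
\]
Taking the supremum in $t$ and using~\eqref{defkolmogorovdist} reduces the theorem to proving
\[
\mathbb{P}_{\beta_{a,b}}(\tau_n<\tau_\infty)\;\le\; \frac{C(a,b)}{n^{1-a}}.
\]

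Next I would bound $\mathbb{P}_{\beta_{a,b}}(\tau_n<\tau_\infty)$ by an integral functional of the form $\Es^{\beta_{a,b}}_n(-\psi)$. Under the coupling, $\{\tau_n<\tau_\infty\}=\{|\Pi^\infty_{\tau_n}|\ge 2\}$, i.e.\ the event that $\Pi^\infty_{\tau_n}$ contains at least one block disjoint from $\{1,\ldots,n\}$ (an ``external'' block surviving past $\tau_n$). Using the Poisson point process construction of the $\beta_{a,b}$-coalescent on $[0,\infty)\times(0,1]$ with intensity $r^{-2}\beta_{a,b}(\dd r)$, together with the strong Markov property at $\tau_n$ and the coming-down-from-infinity property (so that the external block-count is almost surely finite at every positive time), I would aim for the pointwise bound
\[
\mathbb{P}_{\beta_{a,b}}(\tau_n<\tau_\infty\mid\Pi^n)\;\le\;\exp\!\Bigl(-\int_0^{\tau_n}\psi(|\Pi^n_s|)\,\dd s\Bigr)\quad\text{a.s.}
\]
for a carefully chosen nonnegative $\psi$; taking expectation then gives $\mathbb{P}_{\beta_{a,b}}(\tau_n<\tau_\infty)\le\Es^{\beta_{a,b}}_n(-\psi)$. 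This is precisely the mechanism described after Theorem~\ref{asymptfunctionalbis}: an enlargement of the probability space by an auxiliary rate-$\psi(k)$ Poisson clock when $|\Pi^n_s|=k$, whose non-ringing on $[0,\tau_n]$ should contain (at the level of conditional probabilities) the survival of an external block past $\tau_n$.

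A natural candidate is $\psi(k):=\lambda_k(\beta_{a,b})/k$, which trivially satisfies $\sum_{n\ge 2}\bigl|\psi(n)/\lambda_n(\beta_{a,b})-1/n\bigr|=0<\infty$ with $d=1$, and for which the strict condition $d<b/(1-a)$ of Theorem~\ref{asymptfunctionalbis} becomes exactly $b>1-a$. Theorem~\ref{asymptfunctionalbis} then supplies a constant $K_+>0$ with $\Es^{\beta_{a,b}}_n(-\psi)\le K_+/n^{1-a}$ for every $n\ge 1$, and chaining the inequalities yields~\eqref{boundkolmdisteq} with $C(a,b)=K_+$. The main obstacle is establishing the conditional exponential bound displayed above: the rate $\psi(k)$ depends only on $|\Pi^n|$, whereas $\{\tau_n<\tau_\infty\}$ is a property of the full process $\Pi^\infty$ whose external block-count is infinite at time $0$. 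One has to exploit the Poisson structure, the Markov property, and the finiteness of the external-block count at positive times to dominate, conditionally on $\Pi^n$, the survival probability of an external block past $\tau_n$ by an exponential of $\int_0^{\tau_n}\psi(|\Pi^n_s|)\,\dd s$.
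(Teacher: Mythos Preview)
Your reduction to the bound $d_K(\rho_n,\rho_\infty)\le\mathbb{P}_{\beta_{a,b}}(\tau_n<\tau_\infty)$ via the restriction coupling is correct, and it is also what the paper implicitly does. But the heart of your argument --- the conditional inequality
\[
\mathbb{P}_{\beta_{a,b}}(\tau_n<\tau_\infty\mid\Pi^n)\le\exp\Bigl(-\int_0^{\tau_n}\psi(|\Pi^n_s|)\,\dd s\Bigr)
\]
with $\psi(k)=\lambda_k(\beta_{a,b})/k$ --- is not proved, and there is no probabilistic mechanism that would produce exactly this rate. Conditionally on $\Pi^n$, the ``external'' part of $\Pi^\infty$ is an infinite system of blocks; there is no single Poisson clock of rate $\lambda_k(\beta_{a,b})/k$ whose first ring would force $\tau_\infty=\tau_n$. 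The heuristic in the paragraph following Theorem~\ref{asymptfunctionalbis} concerns an \emph{added} exponential clock, not the survival of an existing (let alone infinite) sub-coalescent. You acknowledge the obstacle but do not overcome it, so as written the proof is incomplete.

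The paper avoids this difficulty by never jumping from $n$ to $\infty$ in one step. Instead, Proposition~\ref{coaltimentonplus1} bounds $d_K(\rho_n,\rho_{n+1})$ by an integral functional: with a \emph{single} extra label $n+1$, the Poisson process splits cleanly into atoms with $z_{n+1}=0$ (intensity $(1-r)r^{-2}\Lambda(\dd r)$) and $z_{n+1}=1$ (intensity $r^{-1}\Lambda(\dd r)$). This has two consequences that your $\psi$ misses. First, the reference coalescent for the first $n$ labels becomes a $\Lambda'$-coalescent with $\Lambda'=(1-r)\Lambda$, i.e.\ a $\beta_{a,b+1}$-coalescent, so the functional is $\Es^{\beta_{a,b+1}}_n(-\phi^{\beta_{a,b}})$, not $\Es^{\beta_{a,b}}_n(-\psi)$. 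Second, the rate at which block $n+1$ merges is the explicit $\phi^{\beta_{a,b}}(k)=\int_{(0,1)}(1-(1-r)^k)r^{-1}\beta_{a,b}(\dd r)$, whose precise asymptotics (Lemma~\ref{equivphinab}) give $\phi^{\beta_{a,b}}(n)/\lambda_n(\beta_{a,b+1})\sim d/n$ with $d=(2-a)/(1-a)$, not $d=1$. Theorem~\ref{asymptfunctionalbis} (with parameters $(a,b+1)$) then yields $\Es^{\beta_{a,b+1}}_k(-\phi^{\beta_{a,b}})\le K_+/k^{2-a}$, and summing over $k\ge n$ produces the $n^{-(1-a)}$ bound. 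The hypothesis $b>1-a$ enters as $d=(2-a)/(1-a)<(b+1)/(1-a)$, which is the same inequality you found, but via the correct parameters.

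In short: the missing idea is the one-step decomposition $d_K(\rho_n,\rho_\infty)\le\sum_{k\ge n}d_K(\rho_k,\rho_{k+1})$ together with the Poisson splitting that turns each increment into a genuine $\Es^{\beta_{a,b+1}}_k(-\phi^{\beta_{a,b}})$. Your direct $n\to\infty$ comparison would require controlling infinitely many external blocks at once, which you have not done.
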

The technical assumption $b>1-a$ is a little restrictive. Fortunately, that assumption is satisfied in the very studied case $a+b=2$, i.e. case of the $Beta(2-\alpha,\alpha)$-coalescent. 

A key step in the proof of Theorem \ref{boundkolmdistmain} is to relate the Kolmogorov distance between $\rho_n(\Lambda)$ and $\rho_{n+1}(\Lambda)$ to a functional of the form \eqref{defexpfunctional}. Before stating a result in this direction, we introduce a notation. For any $\Lambda\in\Ms_f([0,1])$ we denote by $\Lambda'$ the measure in $\Ms_f([0,1])$ defined by $\Lambda'(\dd r)=(1-r)\Lambda(\dd r)$. We note that, for any $a,b>0$, $\beta_{a,b}'=\beta_{a,b+1}$. 

\begin{prop} \label{coaltimentonplus1}
For $\Lambda\in\Ms_f([0,1])$ such that $\Lambda(\{0\})=0$, $n \geq 1$, and $t>0$, we have 
\begin{align} \label{coaltimentonplus1expr}
d_K(\rho_n(\Lambda), \rho_{n+1}(\Lambda)) \leq \Es^{\Lambda'}_{n}(-\phi^{\Lambda}), 
\end{align}
where, for any $k \geq 1$, 
\begin{align} \label{defcoefphilamdak}
\phi^{\Lambda}(k):=\int_{(0,1)}(1-(1-r)^k)r^{-1}\Lambda(\dd r). 
\end{align}
\end{prop}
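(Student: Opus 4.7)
My plan is to use the consistency of the $\Lambda$-coalescent to couple $(\Pi^{n+1}_t)_{t\geq 0}$ on $\{1,\ldots,n+1\}$ with its restriction $(\tilde\Pi^n_t)_{t\geq 0}$ to $\{1,\ldots,n\}$, which is a $\Lambda$-coalescent from $n$ singletons. Let $\sigma_{n+1}:=\inf\{t\geq 0:\{n+1\}\notin\Pi^{n+1}_t\}$ denote the first merger time involving $\{n+1\}$, and let $\tau_n$ (resp.\ $\tau_{n+1}$) be the absorption time of $\tilde\Pi^n$ (resp.\ $\Pi^{n+1}$). The first observation is that $\tau_{n+1}=\max(\tau_n,\sigma_{n+1})$: at time $\tau_n$ the restriction $\tilde\Pi^n$ is a single block, so $\Pi^{n+1}_{\tau_n}$ is either already the trivial partition (when $\sigma_{n+1}\leq\tau_n$) or is $\{\{1,\ldots,n\},\{n+1\}\}$, in which case the next merger reduces it to one block and equals $\sigma_{n+1}$. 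In particular $\tau_n\leq\tau_{n+1}$, so $F_{\rho_n}\geq F_{\rho_{n+1}}$ and
\begin{equation*}
d_K(\rho_n(\Lambda),\rho_{n+1}(\Lambda))=\sup_{t\geq 0}\mathbb{P}(\tau_n\leq t<\tau_{n+1}).
\end{equation*}
On the event $\{\tau_n\leq t<\tau_{n+1}\}$ we must have $t<\sigma_{n+1}$ (otherwise $\tau_{n+1}=\max(\tau_n,\sigma_{n+1})\leq t$), hence this event is contained in $\{\sigma_{n+1}>\tau_n\}$, giving the pointwise-in-$t$ bound
\begin{equation*}
\mathbb{P}(\tau_n\leq t<\tau_{n+1})\leq \mathbb{P}(\sigma_{n+1}>\tau_n).
\end{equation*}

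The core of the proof is then to identify $\mathbb{P}(\sigma_{n+1}>\tau_n)$ with $\Es^{\Lambda'}_n(-\phi^\Lambda)$ by analyzing the killed process $(\tilde\Pi^n_t)_{t<\sigma_{n+1}}$. When $\tilde\Pi^n_{t-}$ has $k$ blocks and $\{n+1\}$ is still isolated, $\Pi^{n+1}_{t-}$ has $k+1$ blocks and, by \eqref{coalrates} together with $\Lambda'(\dd r)=(1-r)\Lambda(\dd r)$, the rate at which a prescribed set of $\ell$ blocks of $\tilde\Pi^n$ merges is
\begin{equation*}
\lambda_{k+1,\ell}(\Lambda)=\int_{[0,1]}r^{\ell-2}(1-r)^{k+1-\ell}\Lambda(\dd r)=\lambda_{k,\ell}(\Lambda').
\end{equation*}
On the other hand, summing over choices of blocks to merge with $\{n+1\}$ and using the identity $\sum_{m=1}^{k}\binom{k}{m}r^{m-1}(1-r)^{k-m}=r^{-1}(1-(1-r)^k)$, the total rate at which $\{n+1\}$ ceases to be alone equals
\begin{equation*}
\sum_{\ell=2}^{k+1}\binom{k}{\ell-1}\lambda_{k+1,\ell}(\Lambda)=\int_{(0,1)}r^{-1}(1-(1-r)^k)\Lambda(\dd r)=\phi^\Lambda(k).
\end{equation*}
Thus $(\tilde\Pi^n_t)_{t<\sigma_{n+1}}$ has precisely the law of a $\Lambda'$-coalescent on $\{1,\ldots,n\}$ subject to an independent killing at rate $\phi^\Lambda(k)$ while it has $k$ blocks. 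A standard Feynman--Kac argument (projecting out the independent exponential killing and comparing generators, or equivalently using the Dynkin formula for the multiplicative functional $\exp(-\int_0^{\cdot}\phi^\Lambda(|\Pi^n_s|)\dd s)$) then yields
\begin{equation*}
\mathbb{P}(\sigma_{n+1}>\tau_n)=\mathbb{E}_{\Lambda'}\!\left[\exp\!\Big(-\int_0^{\tau_n}\phi^\Lambda(|\Pi^n_s|)\dd s\Big)\right]=\Es^{\Lambda'}_n(-\phi^\Lambda),
\end{equation*}
which combined with the coupling bound proves \eqref{coaltimentonplus1expr}.

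The main obstacle is the rigorous identification of the killed coupled process with the $\Lambda'$-coalescent with multiplicative discount $\exp(-\int_0^\cdot\phi^\Lambda(|\cdot|)\dd s)$. The cleanest way to handle this is to work directly from the Poisson point process construction of the $\Lambda$-coalescent on $(0,\infty)\times[0,1]$ with intensity $\dd t\otimes r^{-2}\Lambda(\dd r)$ and the independent Bernoulli$(r)$ coin flips per block at each atom: the coin flips of $\{n+1\}$ are independent of those of the other blocks, so conditional on the realization of $\tilde\Pi^n$ (which lives in the $\sigma$-algebra generated by the Poisson process together with the other blocks' coin flips) the non-merger of $\{n+1\}$ factorizes as a product of $(1-r)$ terms at the relevant atoms, whose expectation reassembles into the claimed exponential functional. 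The assumption $\Lambda(\{0\})=0$ ensures $\phi^\Lambda(k)<\infty$ and avoids having to treat an additional Kingman-type contribution separately.
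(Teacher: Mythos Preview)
Your proof is correct and follows essentially the same route as the paper's. Both arguments couple $\Pi^{n+1}$ with its restriction to $\{1,\ldots,n\}$, decompose the driving Poisson point process according to the coin flip of the block $\{n+1\}$, identify the $z_{n+1}=0$ part as a $\Lambda'$-coalescent and the $z_{n+1}=1$ part as a killing at rate $\phi^\Lambda(|\Pi^n_\cdot|)$, and conclude via the resulting exponential functional; the only cosmetic difference is that you first pass to the $t$-free bound $\mathbb{P}(\sigma_{n+1}>\tau_n)$ via the identity $\tau_{n+1}=\max(\tau_n,\sigma_{n+1})$ and then identify this probability exactly with $\Es^{\Lambda'}_n(-\phi^\Lambda)$, whereas the paper keeps the $t$-dependent event $\{|\Pi^n_t|=1,|\Pi^{n+1}_t|=2\}$, identifies it as $\E_{\Lambda'}[\mathds{1}_{|\Pi^n_t|=1}e^{-\int_0^t\phi^\Lambda(|\Pi^n_s|)\dd s}]$, and bounds at the end.
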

In view of Proposition \ref{coaltimentonplus1}, the proof of Theorem \ref{boundkolmdistmain} can then be reduced to understanding the asymptotic behavior of the sequence $\phi^{\beta_{a,b}}(n)$ as $n$ goes to infinity and to applying Theorem \ref{asymptfunctionalbis}. We note that Theorem \ref{asymptfunctional} can very well be applied as well but will result in a less precise bound. The proofs of Proposition \ref{coaltimentonplus1} and Theorem \ref{boundkolmdistmain} are given in Section \ref{proofboundkoldist}. 

Finally, let us mention that, for the $\Lambda$-coalescent $(\Pi^{\infty}_t)_{t\geq 0}$ starting with infinitely many blocks (see Remark \ref{massesofblocks}), there is a natural coupling between all the absorption times $(\tau_n)_{n \geq 1}$ that consists in setting $\tau_n$ as the absorption time of the system restricted to the integers $\{1,\ldots,n\}$. Then, the probability $\mathbb{P}_{\Lambda}(\tau_n \neq \tau_{n+1})$ is called \textit{record probability}, as it is the probability that adding level $n+1$ breaks the previous record for the absorption time. Some aspects of the record probabilities are studied in \cite{10.1214/14-AAP1077} in the case of the $\beta_{a,b}$-coalescent where $a+b=2$, i.e. case of the $Beta(2-\alpha,\alpha)$-coalescent. In particular, when $\alpha=3/2$ (i.e. $a=1/2$ and $b=3/2$) the expression of the record probability is derived explicitly: 
\begin{align} \label{recordproba3/2}
\mathbb{P}_{\beta_{1/2,3/2}}(\tau_n \neq \tau_{n+1}) = \frac{3}{2} \frac1{(2n+1)(2n-1)} + \frac{3}{4} \frac{\Gamma(3/2)\Gamma(n)}{\Gamma(n+3/2)}. 
\end{align}
As a by-product of the proofs of Proposition \ref{coaltimentonplus1} and Theorem \ref{boundkolmdistmain}, we obtain the following corollary on the record probabilities. 
\begin{cor} \label{recordproba}
For any $\Lambda\in\Ms_f([0,1])$, and $n \geq 1$, the record probability $\mathbb{P}_{\Lambda}(\tau_n \neq \tau_{n+1})$ equals $\Es^{\Lambda'}_{n}(-\phi^{\Lambda})$ (the latter quantity being as in proposition \ref{coaltimentonplus1}). Moreover, for $\Lambda=\beta_{a,b}$ with $a \in (0,1), b>1-a$, there are constants $c_1(a,b),c_2(a,b)>0$ such that 
\begin{align} \label{recordproba0}
\frac{c_1(a,b)}{n^{2-a}} \leq \mathbb{P}_{\beta_{a,b}}(\tau_n \neq \tau_{n+1}) \leq \frac{c_2(a,b)}{n^{2-a}}. 
\end{align}
\end{cor}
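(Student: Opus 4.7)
The plan has two parts. First I will derive the identity $\mathbb{P}_\Lambda(\tau_n \neq \tau_{n+1}) = \Es^{\Lambda'}_n(-\phi^\Lambda)$ (this is essentially what falls out of the proof of Proposition \ref{coaltimentonplus1}); then I will apply Theorem \ref{asymptfunctionalbis} to upgrade this identity to the two-sided estimate \eqref{recordproba0}. For the identity, I use the standard Poisson construction of the $\Lambda$-coalescent on $\{1,\ldots,n+1\}$, so that $\Pi^n$ is realized as the trace of $\Pi^{n+1}$ on $\{1,\ldots,n\}$ and $\tau_n \leq \tau_{n+1}$. The record event then coincides with the event $E_n$ that the singleton $\{n+1\}$ has not been absorbed into another block by time $\tau_n$. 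While $E_n$ holds, the transitions of $\Pi^{n+1}$ split into (i) $k$-mergers among the $p$ blocks of $\Pi^n$ not involving $\{n+1\}$, at rate $\binom{p}{k}\lambda_{p+1,k}(\Lambda) = \binom{p}{k}\lambda_{p,k}(\Lambda')$ (the factor $(1-r)$ reflecting that $\{n+1\}$ must stay unmarked at the Poisson event precisely converts $\Lambda$ into $\Lambda'$); and (ii) $k$-mergers absorbing $\{n+1\}$, whose total rate is $\sum_{j=1}^p\binom{p}{j}\lambda_{p+1,j+1}(\Lambda) = \int_{(0,1)} r^{-1}(1-(1-r)^p)\Lambda(dr) = \phi^\Lambda(p)$ by the binomial theorem. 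A standard killing-time argument applied to the Markov chain $(|\Pi^n|, \mathbf{1}_{E_n})$ then identifies $\mathbb{P}_\Lambda(E_n)$ with $\mathbb{E}_{\Lambda'}[\exp(-\int_0^{\tau_n}\phi^\Lambda(|\Pi^n_s|)\,ds)] = \Es^{\Lambda'}_n(-\phi^\Lambda)$, proving the first claim.

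For \eqref{recordproba0}, I apply Theorem \ref{asymptfunctionalbis} with parameters $(a, b+1)$ (admissible since $\beta_{a,b}' = \beta_{a,b+1}$ and $b+1 > 0$) and $\psi = \phi^{\beta_{a,b}}$. The candidate constant $d$ emerges from leading-order asymptotics: expanding $1-(1-r)^n = r\sum_{j=0}^{n-1}(1-r)^j$ yields the exact identity $\phi^{\beta_{a,b}}(n) = \sum_{j=0}^{n-1} B(a, b+j)$, and Stirling gives $B(a, b+j) \sim \Gamma(a) j^{-a}$, hence $\phi^{\beta_{a,b}}(n) \sim \Gamma(a) n^{1-a}/(1-a)$. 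Combined with $\lambda_n(\beta_{a,b+1}) \sim \Gamma(a) n^{2-a}/(2-a)$ (via Lemma \ref{equivlambdanab}), this produces $d = (2-a)/(1-a)$, and the admissibility constraint $d < (b+1)/(1-a)$ of Theorem \ref{asymptfunctionalbis} simplifies exactly to the standing hypothesis $b > 1-a$. The conclusion of Theorem \ref{asymptfunctionalbis} then yields $K_\pm > 0$ with $K_-/n^{d(1-a)} \leq \Es^{\beta_{a,b+1}}_n(-\phi^{\beta_{a,b}}) \leq K_+/n^{d(1-a)}$; substituting $d(1-a) = 2-a$ and combining with the identity from the first paragraph gives \eqref{recordproba0}.

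The main technical obstacle I expect is the absolute summability hypothesis $\sum_{n\geq 2}|\phi^{\beta_{a,b}}(n)/\lambda_n(\beta_{a,b+1}) - d/n| < \infty$ required in Theorem \ref{asymptfunctionalbis}. For this I would push both of the above asymptotics one order further: the expansions in Appendix \ref{coeflambdanab} should yield $\lambda_n(\beta_{a,b+1}) = \Gamma(a) n^{2-a}/(2-a) + O(n^{1-a})$, while the refined ratio estimate $B(a, b+j) = \Gamma(a) j^{-a}(1 + O(1/j))$ combined with the classical expansion $\sum_{j=1}^{n-1} j^{-a} = n^{1-a}/(1-a) + \zeta(a) + O(n^{-a})$ gives $\phi^{\beta_{a,b}}(n) = \Gamma(a) n^{1-a}/(1-a) + O(1)$. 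Together these imply $\phi^{\beta_{a,b}}(n)/\lambda_n(\beta_{a,b+1}) - d/n = O(n^{-(2-a)})$, which is summable precisely because $a \in (0,1)$, closing the plan.
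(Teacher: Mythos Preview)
Your proposal is correct and follows essentially the same route as the paper. The paper's proof simply points back to the argument for Proposition \ref{coaltimentonplus1} (the Poisson decomposition $N=N_0+N_1$ is exactly your rate splitting (i)/(ii)) for the identity, and then invokes the estimate \eqref{boundkolmdisteq3} already obtained in the proof of Theorem \ref{boundkolmdistmain}, which is precisely your application of Theorem \ref{asymptfunctionalbis} with parameters $(a,b+1)$, $\psi=\phi^{\beta_{a,b}}$, and $d=(2-a)/(1-a)$. The only cosmetic difference is that the paper verifies the summability hypothesis \eqref{condpsiprecise0} via the closed form $\phi^{\beta_{a,b}}(n)=\frac{\Gamma(a)}{1-a}\big(\frac{\Gamma(n+b)}{\Gamma(n-1+a+b)}-\mathds{1}_{a+b\neq1}\frac{\Gamma(b)}{\Gamma(a+b-1)}\big)$ (Lemma \ref{equivphinab}), which makes the $O(n^{-(2-a)})$ error immediate, whereas you reach the same bound by a term-by-term Stirling expansion of $\sum_j B(a,b+j)$; both are valid.
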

We see from Lemma \ref{equivquotientgamma} that, as $n \to \infty$, the asymptotic of the expression \eqref{recordproba3/2} indeed agrees with Corollary \ref{recordproba}. 

\subsection{Organization of the paper}
The rest of the paper is organized as follows. In Section \ref{studyintfct} we study the Laplace transforms \eqref{defexpfunctional} of the integral functionals and prove the results sated in Section \ref{sectionintfunct}. In Section \ref{proofldp} we apply these results to prove the large deviation principles sated in Section \ref{sectionldpmain}. In Section \ref{proofboundkoldist} we study the convergence rate of the distribution of the absorption time in the case of coming down from infinity and prove the results sated in Section \ref{cvkolcasecdi}. Appendix \ref{coeflambdanab} gathers some expressions and asymptotics of the coefficients $\lambda_n(\beta_{a,b})$. Appendix \ref{fctzetaab} gathers properties of the quotients of two Gamma functions and of the Digamma function. Appendix \ref{appendlgn} identifies the limits from \eqref{lgn} and \eqref{lgnclassical}. 

\section{Integral functionals of the Beta-coalescent} \label{studyintfct}
\subsection{Basic properties} \label{usefulprop}
In this section we establish some basic properties of the Laplace transforms $\Es^{\Lambda}_{n}(\theta \psi)$. In spite of their simplicity, these properties, when combined with the construction of particular cases from Section \ref{changeofmeasuretrick}, will play a role in the derivation of the asymptotics from Theorem \ref{asymptfunctional}. 

For any $k \geq 2$, let $A^n_k$ denote the event where the state $k$ is visited by $(|\Pi^n_t|)_{t \geq 0}$, i.e. 
\begin{align}
A^n_k := \{ \exists t \geq 0 \ \text{s.t.} \ |\Pi^n_t|=k \}. \label{defak}
\end{align}
In order to manage the thresholds in the large deviation principles from Theorems \ref{thmpgd}-\ref{thmpgdlevelk}, we do not only need to study the Laplace transforms of the functionals $\int_0^{\tau_n} \psi(|\Pi^n_s|) \dd s$ under $\mathbb{P}_{\beta_{a,b}}(\cdot)$, but also under $\mathbb{P}_{\beta_{a,b}}(\cdot | A^n_k)$. We thus define, for $\psi:\{2,3,\ldots\} \rightarrow \mathbb{R}$, 
\begin{align}
\Es^{\Lambda}_{n,k}(\theta \psi):=\E_{\Lambda} [ e^{\theta \int_0^{\tau_n} \psi(|\Pi^n_s|) \dd s} \mathds{1}_{A^n_k} ] \in (0, \infty], \ n \geq 1. \label{defexpfunctionalonak}
\end{align}
Assuming \eqref{condlaplacefinitenosign} holds true, we have $\Es^{\Lambda}_{n,k}(\theta \psi)=0$ for all $n<k$, $\Es^{\Lambda}_{k,k}(\theta \psi)=(1-\theta\psi(k)/\lambda_k(\Lambda) )^{-1}$, and $\Es^{\Lambda}_{n,k}(\theta \psi) \in (0, \infty)$ for all $n\geq k$. 

The following lemma provides the recursion satisfied by $\Es^{\Lambda}_{n}(\theta \psi)$ and $\Es^{\Lambda}_{n,k}(\theta \psi)$. The version of that recursion for the case $\psi(k)=1$ already appeared in \cite[Eq. (33)]{pitman1999}. 
\begin{lemma}[recursion] \label{relrecfunctional}
For $\psi:\{2,3,\ldots\} \rightarrow \mathbb{R}$ and $\theta \in \mathbb{R}$ such that \eqref{condlaplacefinitenosign} is satisfied we have, for $n \geq 2$, 
\begin{align}
\Es^{\Lambda}_{n}(\theta \psi) & = \left (1-\theta\frac{\psi(n)}{\lambda_n(\Lambda)} \right )^{-1} \times \sum_{j=1}^{n-1} \binom{n}{j-1} \frac{\lambda_{n,n-j+1}(\Lambda)}{\lambda_n(\Lambda)} \Es^{\Lambda}_{j}(\theta \psi). \label{relrecfunctional0} 
\end{align}
For $\psi:\{2,3,\ldots\} \rightarrow \mathbb{R}$ and $\theta \in \mathbb{R}$ such that \eqref{condlaplacefinitenosign} is satisfied we have, for $n \geq k+1$, 
\begin{align}
\Es^{\Lambda}_{n,k}(\theta \psi) & = \left (1-\theta\frac{\psi(n)}{\lambda_n(\Lambda)} \right )^{-1} \times \sum_{j=k}^{n-1} \binom{n}{j-1} \frac{\lambda_{n,n-j+1}(\Lambda)}{\lambda_n(\Lambda)} \Es^{\Lambda}_{j,k}(\theta \psi). \label{relrecfunctional0onak}
\end{align}
\end{lemma}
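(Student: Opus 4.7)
\textbf{Plan for the proof of Lemma \ref{relrecfunctional}.}
The plan is to perform a first-step analysis at the initial jump of the block-counting chain $(|\Pi^n_t|)_{t \geq 0}$. Let $T$ denote the holding time at the initial state $n$, and let $J$ denote the post-jump state. By construction of the $\Lambda$-coalescent, $T$ is exponentially distributed with parameter $\lambda_n(\Lambda)$ and is independent of $J$. A transition in which $k \in \{2,\ldots,n\}$ blocks merge happens at total rate $\binom{n}{k}\lambda_{n,k}(\Lambda)$, and the resulting state is $n-k+1$. Writing $j = n-k+1$ (so $k = n-j+1$ and $\binom{n}{k}=\binom{n}{j-1}$), this yields
\begin{align*}
\mathbb{P}_{\Lambda}(J=j) = \frac{\binom{n}{j-1}\lambda_{n,n-j+1}(\Lambda)}{\lambda_n(\Lambda)}, \qquad j \in \{1,\ldots,n-1\}.
\end{align*}

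Next, I would split the integral functional as $\int_0^{\tau_n}\psi(|\Pi^n_s|)\dd s = T\psi(n) + \int_T^{\tau_n}\psi(|\Pi^n_s|)\dd s$. By the strong Markov property, conditionally on $\{J=j\}$ the shifted process $(|\Pi^n_{T+s}|)_{s \geq 0}$ is distributed like a $\Lambda$-coalescent started from $j$ blocks, independent of $T$. Consequently,
\begin{align*}
\mathbb{E}_{\Lambda}\!\left[e^{\theta\int_T^{\tau_n}\psi(|\Pi^n_s|)\dd s}\,\big|\, T, J=j\right] = \Es^{\Lambda}_{j}(\theta\psi),
\end{align*}
and the factor $\mathbb{E}[e^{\theta T\psi(n)}] = (1-\theta\psi(n)/\lambda_n(\Lambda))^{-1}$ follows from the Laplace transform of an exponential random variable, which is well-defined and positive under the assumption \eqref{condlaplacefinitenosign} since $\theta\psi(n)<\lambda_n(\Lambda)$. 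Multiplying the factors and summing over $j$ yields exactly \eqref{relrecfunctional0}. The condition \eqref{condlaplacefinitenosign} also guarantees by induction that each $\Es^{\Lambda}_{j}(\theta\psi)$ appearing on the right-hand side is finite, so Fubini's/Tonelli's theorem legitimately decouples the expectations.

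For the second identity \eqref{relrecfunctional0onak}, the same decomposition applies, with the additional indicator $\mathds{1}_{A^n_k}$. When $n \geq k+1$, the event $A^n_k = \{\exists t \geq 0 :|\Pi^n_t|=k\}$ occurs if and only if the post-jump state satisfies $J \geq k$ and the restarted coalescent from $J$ subsequently visits $k$ (i.e.\ $A^J_k$ holds in the sense of the restarted process). The strong Markov property then gives
\begin{align*}
\mathbb{E}_{\Lambda}\!\left[e^{\theta\int_T^{\tau_n}\psi(|\Pi^n_s|)\dd s}\mathds{1}_{A^n_k}\,\big|\, T, J=j\right] = \Es^{\Lambda}_{j,k}(\theta\psi) \quad\text{for } j \geq k,
\end{align*}
and vanishes for $j<k$. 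Restricting the sum accordingly and repeating the argument of the first part produces \eqref{relrecfunctional0onak}.

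The proof is essentially a routine application of the strong Markov property at the first jump of the block-counting chain, and I do not anticipate a genuine obstacle. The main points to be careful about are purely bookkeeping: correctly identifying the combinatorial factor $\binom{n}{j-1}$ arising from the relabeling $j = n-k+1$, and justifying the interchange of expectation and summation by invoking \eqref{condlaplacefinitenosign} to ensure every Laplace transform involved is finite (so that no $\infty \cdot 0$ ambiguity arises in the unconditioned identity).
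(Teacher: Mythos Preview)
Your proposal is correct and follows essentially the same approach as the paper: the paper's proof is a one-sentence appeal to the Markov property of the block-counting process at its first jump time, noting that the holding time is exponential with parameter $\lambda_n(\Lambda)$ and that the transition probabilities to state $j$ are as you computed. You have simply spelled out the bookkeeping (the binomial coefficient via $j=n-k+1$, the treatment of $\mathds{1}_{A^n_k}$, and the finiteness justification) in more detail than the paper does.
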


\begin{proof}
The claims follow immediately from the Markov property for the block counting process $(|\Pi^{n}_t|)_{t\geq 0}$ at its first transition time, noting that the later follows an exponential distribution with parameter $\lambda_n(\Lambda)$ and that, conditionally on this transition time, transition to the state $j \in \{1,\ldots,n-1\}$ occurs with probability $\lambda_{n,n-j+1}(\Lambda)/\lambda_n(\Lambda)$. 
\end{proof}

The following lemma is a useful comparison principle for $\Es^{\Lambda}_{n}(\cdot)$ and $\Es^{\Lambda}_{n,k}(\cdot)$. 
\begin{lemma}[monotonicity] \label{comparisonprinciple}
For $\psi_1, \psi_2:\{2,3,\ldots\} \rightarrow \mathbb{R}$ and $\theta_1, \theta_2 \in \mathbb{R}$ such that $(\psi_1,\theta_1)$ and $(\psi_2,\theta_2)$ both satisfy \eqref{condlaplacefinitenosign}, if $\theta_1 \psi_1(n) \leq \theta_2 \psi_2(n)$ for all large $n$ then there is a constant $C\in(0,\infty)$ (resp. $C_k\in(0,\infty)$) such that for all $n \geq 1$, $\Es^{\Lambda}_{n}(\theta_1 \psi_1) \leq C \Es^{\Lambda}_{n}(\theta_2 \psi_2)$ (resp. $\Es^{\Lambda}_{n,k}(\theta_1 \psi_1) \leq C_k \Es^{\Lambda}_{n,k}(\theta_2 \psi_2)$). If $\theta_1 \psi_1(n) \leq \theta_2 \psi_2(n)$ for all $n \geq 2$ then we can choose $C=1$ (resp. $C_k=1$). 
\end{lemma}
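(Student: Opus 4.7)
The plan is to argue by strong induction on $n$ using the recursions from Lemma \ref{relrecfunctional}, exploiting the fact that both sides of \eqref{relrecfunctional0} depend on $\psi$ and $\theta$ only through the scalar $\theta\psi(n)$ in the prefactor and through the values $\Es_j^\Lambda(\theta\psi)$ for $j<n$. The key observation is that, whenever $(\psi,\theta)$ satisfies \eqref{condlaplacefinitenosign}, the map $s \mapsto (1-s/\lambda_n(\Lambda))^{-1}$ is increasing on $(-\infty,\lambda_n(\Lambda))$, so that $\theta_1\psi_1(n)\leq \theta_2\psi_2(n)<\lambda_n(\Lambda)$ gives
\begin{equation*}
\left(1-\theta_1\frac{\psi_1(n)}{\lambda_n(\Lambda)}\right)^{-1} \leq \left(1-\theta_2\frac{\psi_2(n)}{\lambda_n(\Lambda)}\right)^{-1}.
\end{equation*}
Moreover all the weights $\binom{n}{j-1}\lambda_{n,n-j+1}(\Lambda)/\lambda_n(\Lambda)$ appearing in \eqref{relrecfunctional0} are non-negative.

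I would first handle the stronger assumption $\theta_1\psi_1(n)\leq \theta_2\psi_2(n)$ for \emph{every} $n\geq 2$. Since $\Es^{\Lambda}_1(\theta\psi)=1$ for any $\psi,\theta$, the base case is trivial. For the inductive step, apply \eqref{relrecfunctional0} to both $(\psi_1,\theta_1)$ and $(\psi_2,\theta_2)$, bound the prefactor as above, and use the induction hypothesis $\Es_j^\Lambda(\theta_1\psi_1)\leq \Es_j^\Lambda(\theta_2\psi_2)$ termwise in the sum. This yields $\Es_n^\Lambda(\theta_1\psi_1)\leq \Es_n^\Lambda(\theta_2\psi_2)$, so that $C=1$ works.

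For the general statement, let $N\geq 2$ be chosen so that $\theta_1\psi_1(n)\leq \theta_2\psi_2(n)$ for all $n\geq N$. Because \eqref{condlaplacefinitenosign} holds for both couples, the quantities $\Es^{\Lambda}_n(\theta_i\psi_i)$ are positive and finite for $n\leq N$, so one can simply set
\begin{equation*}
C := \max_{1\leq n\leq N}\frac{\Es^{\Lambda}_n(\theta_1\psi_1)}{\Es^{\Lambda}_n(\theta_2\psi_2)}\in(0,\infty),
\end{equation*}
and then run the same induction from $n=N+1$ onward: the inductive hypothesis $\Es_j^\Lambda(\theta_1\psi_1)\leq C\,\Es_j^\Lambda(\theta_2\psi_2)$ for $j<n$, together with the prefactor comparison and the non-negativity of the weights, gives $\Es_n^\Lambda(\theta_1\psi_1)\leq C\,\Es_n^\Lambda(\theta_2\psi_2)$, and $C$ absorbs the base layer $n\leq N$ by construction.

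The $\Es^{\Lambda}_{n,k}$ version is obtained by exactly the same argument, now based on \eqref{relrecfunctional0onak}. The only differences are that $\Es^{\Lambda}_{n,k}(\theta\psi)$ vanishes for $n<k$ (so the induction starts at $n=k$, where $\Es^{\Lambda}_{k,k}(\theta\psi)=(1-\theta\psi(k)/\lambda_k(\Lambda))^{-1}$ is explicit and positive) and that the sum in \eqref{relrecfunctional0onak} starts at $j=k$. The constant $C_k$ is defined as the maximum of the ratios over $k\leq n\leq N$. I do not anticipate any substantive obstacle: once Lemma \ref{relrecfunctional} is in place, this is a straightforward induction, and the only subtlety is the bookkeeping needed to absorb the finitely many indices where the pointwise inequality may fail into the multiplicative constant.
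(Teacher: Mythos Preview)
Your proof is correct and follows essentially the same approach as the paper: define $C$ as the maximum of the ratios over the finitely many initial indices, then propagate the inequality by induction using the recursion \eqref{relrecfunctional0} (respectively \eqref{relrecfunctional0onak}), exploiting positivity of the weights and monotonicity of the prefactor. The only cosmetic difference is that for the $C=1$ case the paper bypasses the induction and reads the inequality directly from the definition \eqref{defexpfunctional} (pointwise $\theta_1\psi_1\leq\theta_2\psi_2$ gives a pathwise inequality for the integral functionals, hence for the expectations of their exponentials), whereas you obtain it by the same inductive scheme; both are fine.
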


\begin{proof}
Let us prove the claims for $\Es^{\Lambda}_{n}(\cdot)$. By assumption there is $n_0$ such that $\theta_1 \psi_1(n) \leq \theta_2 \psi_2(n)$ for all $n \geq n_0$. Setting $C:= \max \{ \Es^{\Lambda}_{j}(\theta_1 \psi_1)/\Es^{\Lambda}_{j}(\theta_2 \psi_2); j \in \{1,\ldots,n_0\}\}$ we have $\Es^{\Lambda}_{n}(\theta_1 \psi_1) \leq C \Es^{\Lambda}_{n}(\theta_2 \psi_2)$ for all $n \leq n_0$. Then the first claim follows by induction on $n \geq n_0$ using Lemma \ref{relrecfunctional}. The last claim is a direct consequence of \eqref{defexpfunctional}. The claims for $\Es^{\Lambda}_{n,k}(\cdot)$ are proved similarly setting $C_k:= \max \{ \Es^{\Lambda}_{j,k}(\theta_1 \psi_1)/\Es^{\Lambda}_{j,k}(\theta_2 \psi_2); j \in \{k,\ldots,n_0\}\}$. 
\end{proof}

The following proposition shows that $\Es^{\Lambda}_{n}(-\psi)$ is stable with respect to small variations of $\psi$. 
\begin{lemma}[stability property] \label{stabilityprinciple}
For $\psi_1, \psi_2:\{2,3,\ldots\} \rightarrow [0,\infty)$, if 
\begin{align}
\sum_{n \geq 2} \frac{|\psi_1(n)-\psi_2(n)|}{\lambda_n(\Lambda)} < \infty, \label{condsummability}
\end{align}
then there is a constant $C\geq 1$ such that, for any $n \geq 1$, we have 
\begin{align}
C \ \Es^{\Lambda}_{n}(-\psi_2) \geq \Es^{\Lambda}_{n}(-\psi_1) \geq C^{-1} \ \Es^{\Lambda}_{n}(-\psi_2). \label{equivasymptfunctional}
\end{align}
\end{lemma}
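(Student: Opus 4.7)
The plan is to establish a conditional product representation of $\Es^{\Lambda}_{n}(-\psi)$ from which the ratio $\Es^{\Lambda}_{n}(-\psi_1)/\Es^{\Lambda}_{n}(-\psi_2)$ is bounded by a product of pointwise factors controlled via the summability assumption \eqref{condsummability}. Writing the (random, strictly decreasing) sequence of values visited by the block-counting process $(|\Pi^n_t|)_{t \geq 0}$ as $K_0=n>K_1>\cdots>K_L=1$, the strong Markov property gives that, conditionally on $(K_0,\ldots,K_L)$, the sojourn times at the successive states $K_0,\ldots,K_{L-1}$ are independent exponential variables with respective parameters $\lambda_{K_i}(\Lambda)$. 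Integrating the exponential of $-\psi(K_i)$ against each such sojourn time yields
\[
\Es^{\Lambda}_{n}(-\psi) \;=\; \E_\Lambda\!\left[\prod_{i=0}^{L-1}\frac{1}{1+\psi(K_i)/\lambda_{K_i}(\Lambda)}\right],
\]
a representation in which, crucially, the joint law of $(K_0,\ldots,K_L)$ does not depend on $\psi$.

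The key point is then that, by strict decrease, each state in $\{2,\ldots,n\}$ is visited at most once along any trajectory. Setting $R(k):=(1+\psi_2(k)/\lambda_k(\Lambda))/(1+\psi_1(k)/\lambda_k(\Lambda))$, the elementary inequality $|\log(1+x)-\log(1+y)|\leq |x-y|$ for $x,y\geq 0$ gives $|\log R(k)|\leq |\psi_1(k)-\psi_2(k)|/\lambda_k(\Lambda)$, so the single-visit property yields almost surely
\[
\left|\log \prod_{i=0}^{L-1} R(K_i)\right| \;\leq\; \sum_{k\geq 2}\frac{|\psi_1(k)-\psi_2(k)|}{\lambda_k(\Lambda)} \;=:\; S \;<\; \infty
\]
by \eqref{condsummability}. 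Inserting the pointwise bound $\prod_i R(K_i)\in[e^{-S},e^S]$ into the ratio of the product representations of $\Es^{\Lambda}_{n}(-\psi_1)$ and $\Es^{\Lambda}_{n}(-\psi_2)$ and taking expectation then delivers $e^{-S}\Es^{\Lambda}_{n}(-\psi_2)\leq\Es^{\Lambda}_{n}(-\psi_1)\leq e^{S}\Es^{\Lambda}_{n}(-\psi_2)$, i.e.\ \eqref{equivasymptfunctional} with $C=e^{S}$.

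An alternative route, more in the spirit of Section \ref{usefulprop}, would use the recursion of Lemma \ref{relrecfunctional} directly: setting $r_n:=\Es^{\Lambda}_{n}(-\psi_1)/\Es^{\Lambda}_{n}(-\psi_2)$ and $M_n:=\max_{1\le j\le n}r_j$, the recursion expresses $r_n$ as a convex combination of the $r_j$ for $j<n$ multiplied by the factor $(1+\psi_2(n)/\lambda_n(\Lambda))/(1+\psi_1(n)/\lambda_n(\Lambda))$, whose logarithm is bounded in absolute value by $|\psi_1(n)-\psi_2(n)|/\lambda_n(\Lambda)$; the summability \eqref{condsummability} then furnishes a uniform bound on $\log M_n$, and symmetrically on $\log \min_{1\le j\le n}r_j$. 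I do not expect any serious obstacle; the only mildly technical point is the justification of the conditional-exponential product representation, which is standard for pure-jump Markov chains with exponential holding times. The role of assumption \eqref{condsummability} is precisely to absorb the cumulative contribution of the at-most-one-visit factors into a constant $C$ that is uniform in $n$.
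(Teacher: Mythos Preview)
Your main argument is correct and takes a genuinely different route from the paper. You exploit the skeleton/holding-time decomposition of the block-counting chain to write $\Es^{\Lambda}_{n}(-\psi)$ as an expectation of a product over the visited states, then compare the two products pointwise via the Lipschitz bound $|\log(1+x)-\log(1+y)|\le |x-y|$; the single-visit property and \eqref{condsummability} give the uniform constant $C=e^{S}$. The paper instead works entirely through the recursion of Lemma~\ref{relrecfunctional}: it first treats the ordered case $\psi_1\ge\psi_2$, proving by induction that $\Es^{\Lambda}_{n}(-\psi_1)\ge\Es^{\Lambda}_{n}(-\psi_2)\prod_{k=2}^{n}(1+(\psi_1(k)-\psi_2(k))/\lambda_k(\Lambda))^{-1}$, pairs this with the monotonicity bound of Lemma~\ref{comparisonprinciple}, and then reduces the general case to the ordered one by introducing an intermediate $\psi_0(n):=\psi_2(n)+|\psi_1(n)-\psi_2(n)|$. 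Your approach is more conceptual and handles both directions and the unordered case at once; the paper's approach stays self-contained within the recursive framework of Section~\ref{usefulprop} and yields the slightly sharper constant $C=\prod_{k\ge 2}(1+|\psi_1(k)-\psi_2(k)|/\lambda_k(\Lambda))\le e^{S}$. Your ``alternative route'' sketch is essentially the paper's argument, though note that what you call a ``convex combination of the $r_j$'' is really a $\Es^{\Lambda}_{j}(-\psi_2)$-weighted average (the weights do not sum to one before normalisation), which is exactly what the paper's induction makes precise.
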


\begin{proof}
We first consider the case of $\psi_1, \psi_2$ satisfying \eqref{condsummability} and the extra assumption 
\begin{align}
\forall n \geq 2, \ \psi_1(n) \geq \psi_2(n). \label{extracond}
\end{align}
We claim that, in this case, for all $n \geq 1$, we have 
\begin{align}
\Es^{\Lambda}_{n}(-\psi_1) \geq \Es^{\Lambda}_{n}(-\psi_2) \times \prod_{k=2}^{n} \left (1+\frac{\psi_1(k)-\psi_2(k)}{\lambda_k(\Lambda)} \right )^{-1}, \label{inductioncompfunctional}
\end{align}
with the convention $\prod_{k=2}^{1} \dots =1$. Once the claim \eqref{inductioncompfunctional} is established, the assumption \eqref{condsummability} ensures that the product in the right-hand side converges to a positive number so, combining with the obvious inequality $\Es^{\Lambda}_{n}(-\psi_2) \geq \Es^{\Lambda}_{n}(-\psi_1)$ from Lemma \ref{comparisonprinciple}, we obtain that, for all $n \geq 1$, 
\begin{align}
\Es^{\Lambda}_{n}(-\psi_2) \geq \Es^{\Lambda}_{n}(-\psi_1) \geq \Es^{\Lambda}_{n}(-\psi_2) \times \prod_{k=2}^{\infty} \left (1+\frac{\psi_1(k)-\psi_2(k)}{\lambda_k(\Lambda)} \right )^{-1}. \label{ineqexactconst}
\end{align}

We now prove the claim \eqref{inductioncompfunctional} by induction. Since $\Es^{\Lambda}_{1}(-\psi_1) = \Es^{\Lambda}_{1}(-\psi_2) =1$, \eqref{inductioncompfunctional} holds for $n=1$. We now assume that \eqref{inductioncompfunctional} holds true for all indices $1,\ldots,n-1$ and justify it for index $n$. We note that 
\begin{align*}
\left (1+\frac{\psi_1(n)}{\lambda_n(\Lambda)} \right )^{-1} \geq \left (1+\frac{\psi_1(n)-\psi_2(n)}{\lambda_n(\Lambda)} \right )^{-1} \times \left (1+\frac{\psi_2(n)}{\lambda_n(\Lambda)} \right )^{-1}. 
\end{align*}
Plugging this and the induction hypothesis in \eqref{relrecfunctional0} (applied to $\Es^{\Lambda}_{n}(-\psi_1)$) we get 
\begin{align*}
\Es^{\Lambda}_{n}(-\psi_1) & \geq \left (1+\frac{\psi_1(n)-\psi_2(n)}{\lambda_n(\Lambda)} \right )^{-1} \\
& \times \left (1+\frac{\psi_2(n)}{\lambda_n(\Lambda)} \right )^{-1} \times \sum_{j=1}^{n-1} \binom{n}{j-1} \frac{\lambda_{n,n-j+1}(\Lambda)}{\lambda_n(\Lambda)} \Es^{\Lambda}_{j}(-\psi_2) \prod_{k=2}^{j} \left (1+\frac{\psi_1(k)-\psi_2(k)}{\lambda_k(\Lambda)} \right )^{-1} \\
& \geq \left ( \prod_{k=2}^{n} \left (1+\frac{\psi_1(k)-\psi_2(k)}{\lambda_k(\Lambda)} \right )^{-1} \right ) \times \left (1+\frac{\psi_2(n)}{\lambda_n(\Lambda)} \right )^{-1} \times \sum_{j=1}^{n-1} \binom{n}{j-1} \frac{\lambda_{n,n-j+1}(\Lambda)}{\lambda_n(\Lambda)} \Es^{\Lambda}_{j}(-\psi_2) \\
& = \Es^{\Lambda}_{n}(-\psi_2) \times \prod_{k=2}^{n} \left (1+\frac{\psi_1(k)-\psi_2(k)}{\lambda_k(\Lambda)} \right )^{-1}, 
\end{align*}
where we have used \eqref{relrecfunctional0} (applied to $\Es^{\Lambda}_{n}(-\psi_2)$) for the last equality. This proves that \eqref{inductioncompfunctional} holds for the index $n$, completing the induction. 

We now consider the case of $\psi_1, \psi_2$ satisfying only the summability assumption \eqref{condsummability}. We define $\psi_0$ via $\psi_0(n) := \psi_2(n) + |\psi_1(n)-\psi_2(n)|$. Then we clearly have that the couples of sequences $(\psi_0, \psi_2)$ and $(\psi_0, \psi_1)$ both satisfy \eqref{condsummability} and \eqref{extracond} so, applying \eqref{ineqexactconst} to these couples, we get that, for all $n \geq 1$, 
\begin{align}
\Es^{\Lambda}_{n}(-\psi_1) & \geq \Es^{\Lambda}_{n}(-\psi_0) \geq \Es^{\Lambda}_{n}(-\psi_2) \times \prod_{k=2}^{\infty} \left (1+\frac{|\psi_1(k)-\psi_2(k)|}{\lambda_k(\Lambda)} \right )^{-1} =: C^{-1} \ \Es^{\Lambda}_{n}(-\psi_2), \label{equivasymptfunctionalhalf}
\end{align}
and this yields one of the bounds from \eqref{equivasymptfunctional}. Then, by reversing the roles of $\psi_1$ and $\psi_2$ in \eqref{equivasymptfunctionalhalf} we obtain the other bound from \eqref{equivasymptfunctional}. 
\end{proof}


\subsection{Construction of a particular case} \label{changeofmeasuretrick}

In view of Lemmas \ref{comparisonprinciple}-\ref{stabilityprinciple}, a path to the proof of Theorems \ref{asymptfunctional}-\ref{asymptfunctionalbis} opens: in order to prove Theorem \ref{asymptfunctional} (resp. Theorem \ref{asymptfunctionalbis}) we only need to find, for $c>0$ (resp. $d>0$), a sequence $\psi(\cdot)$ satisfying \eqref{condpsi0} (resp. \eqref{condpsiprecise0}) and for which the asymptotic behavior of $\Es^{\Lambda}_{n}(\psi)$ can be determined. Then Lemma \ref{comparisonprinciple} (resp. \ref{stabilityprinciple}) ensures that the result can be extended to all sequences satisfying \eqref{condpsi0} (resp. \eqref{condpsiprecise0}) for this $c>0$ (resp. $d>0$). Unfortunately, we are not aware of trivial particular cases of $\psi$ of this kind for all $\Lambda$, and producing one when $\Lambda=\beta_{a,b}$ is already rather involved. This section is dedicated to the construction of such particular cases when $\Lambda=\beta_{a,b}$, and most of the work consists in proving that they indeed satisfy \eqref{condpsi0}-\eqref{condpsiprecise0}, using the trick mentioned in Remark \ref{statmechtrick}. In the following lemma, we choose an explicit sequence that we want to identify as $(\Es^{\beta_{a,b}}_{n}(\psi))_{n\geq 1}$, and build the corresponding $\psi$. 
\begin{lemma} \label{targetsequence}
For any $a,b>0$, $\ell \in (-b,\infty)$, and $n \geq 2$ we set 
\begin{align}
sgn(\ell) \psi_{a,b,\ell}(n) := \lambda_n(\beta_{a,b}) - \sum_{j=1}^{n-1} \binom{n}{j-1} \lambda_{n,n-j+1}(\beta_{a,b}) \frac{\Gamma(b+n-1)\Gamma(b+\ell+j-1)}{\Gamma(b+\ell+n-1)\Gamma(b+j-1)}, \label{defpsiabln}
\end{align}
where $sgn(\ell):=1$ if $\ell \geq 0$ and $-1$ if $\ell<0$. Then $\psi_{a,b,\ell}(\cdot)$ is non-negative, $(\psi_{a,b,\ell},sgn(\ell))$ satisfies \eqref{condlaplacefinitenosign}, and for any $n \geq 1$ we have 
\begin{align}
\Es^{\beta_{a,b}}_{n}(sgn(\ell)\psi_{a,b,\ell}) = \frac{\Gamma(b+\ell+n-1)\Gamma(b)}{\Gamma(b+n-1)\Gamma(b+\ell)}, \label{targetsequence0}
\end{align}
so in particular 
\begin{align}
\Es^{\beta_{a,b}}_{n}(sgn(\ell)\psi_{a,b,\ell}) \underset{n \rightarrow \infty}{\sim} \frac{\Gamma(b)}{\Gamma(b+\ell)} n^{\ell}. \label{targetsequencefinal}
\end{align}
\end{lemma}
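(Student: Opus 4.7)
The plan is to reverse-engineer the definition \eqref{defpsiabln} from the recursion \eqref{relrecfunctional0}. I would introduce the target sequence $u_n := \Gamma(b+\ell+n-1)\Gamma(b)/[\Gamma(b+n-1)\Gamma(b+\ell)]$, which is strictly positive for every $n \geq 1$ since $\ell > -b$ and satisfies $u_1 = 1$. If one \emph{demands} that $u_n = \Es^{\beta_{a,b}}_n(sgn(\ell)\psi)$ and solves for $\psi(n)$ using the recursion \eqref{relrecfunctional0} of Lemma \ref{relrecfunctional}, the outcome is exactly the expression $\psi_{a,b,\ell}(n)$ given by \eqref{defpsiabln}. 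Thus, once $\psi_{a,b,\ell}$ has been shown to be admissible (i.e.\ non-negative with $(\psi_{a,b,\ell}, sgn(\ell))$ satisfying \eqref{condlaplacefinitenosign}), the identity \eqref{targetsequence0} follows by strong induction on $n$ with trivial base case $\Es^{\beta_{a,b}}_1 = 1 = u_1$, and the asymptotic \eqref{targetsequencefinal} is an immediate consequence of the standard ratio-of-Gammas estimate (Lemma \ref{equivquotientgamma}).

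For admissibility the crucial rewriting is the combinatorial identity
\begin{align*}
\lambda_n(\beta_{a,b}) \;=\; \sum_{k=2}^n \binom{n}{k}\lambda_{n,k}(\beta_{a,b}) \;=\; \sum_{j=1}^{n-1}\binom{n}{j-1}\lambda_{n,n-j+1}(\beta_{a,b}),
\end{align*}
obtained by the change of index $k = n-j+1$. Plugging this into \eqref{defpsiabln} produces the much cleaner form
\begin{align*}
sgn(\ell)\,\psi_{a,b,\ell}(n) \;=\; \sum_{j=1}^{n-1}\binom{n}{j-1}\lambda_{n,n-j+1}(\beta_{a,b})\Bigl(1 - \tfrac{u_j}{u_n}\Bigr).
\end{align*}
Non-negativity then reduces to the monotonicity of $j \mapsto u_j$: since $u_{j+1}/u_j = (b+\ell+j-1)/(b+j-1)$, this ratio exceeds $1$ when $\ell>0$ and is smaller than $1$ when $\ell<0$, so $1 - u_j/u_n$ carries the sign of $\ell$ for $1 \leq j \leq n-1$ and is absorbed by the prefactor $sgn(\ell)$. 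Condition \eqref{condlaplacefinitenosign}, namely $sgn(\ell)\psi_{a,b,\ell}(n) < \lambda_n(\beta_{a,b})$, becomes through the same rewrite the strict inequality $\sum_{j=1}^{n-1}\binom{n}{j-1}\lambda_{n,n-j+1}(\beta_{a,b})\,u_j/u_n > 0$, which is immediate since each $u_j$ and each $\lambda_{n,n-j+1}(\beta_{a,b})$ is strictly positive.

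I do not anticipate a serious obstacle: the only genuinely inspired step is recognising that the cryptic right-hand side of \eqref{defpsiabln} is engineered precisely so that $u_n$ satisfies the Laplace-transform recursion. After that the proof is pure bookkeeping — the combinatorial rewrite of $\lambda_n(\beta_{a,b})$, the monotonicity of a Gamma ratio, and a one-line induction. The statistical-mechanics heuristic of Remark \ref{statmechtrick} would additionally allow one to collapse the sum $\sum_j \binom{n}{j-1}\lambda_{n,n-j+1}(\beta_{a,b})u_j$ into the quantity $\lambda_n(\beta_{a,b+\ell})$ multiplied by an explicit Gamma prefactor, via the beta-function expression for $\lambda_{n,k}(\beta_{a,b'})$; this alternative closed form is not needed for the lemma itself but clarifies why one should have expected the choice \eqref{defpsiabln} in the first place.
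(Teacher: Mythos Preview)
Your proposal is correct and follows essentially the same route as the paper: the paper also rewrites $\lambda_n(\beta_{a,b})$ as $\sum_{j=1}^{n-1}\binom{n}{j-1}\lambda_{n,n-j+1}(\beta_{a,b})$, invokes the monotonicity of the Gamma ratio (there via Lemma \ref{fctzetaablem}, whereas you compute $u_{j+1}/u_j$ directly) to get non-negativity and \eqref{condlaplacefinitenosign}, and then closes with the same strong induction against the recursion of Lemma \ref{relrecfunctional} and the asymptotic of Lemma \ref{equivquotientgamma}. Your explicit computation of $u_{j+1}/u_j=(b+\ell+j-1)/(b+j-1)$ is a slightly more self-contained way to handle the sign, but the logic is identical.
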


\begin{proof}
Using the identity $\lambda_n(\Lambda)=\sum_{\ell =2}^n \binom{n}{\ell}\lambda_{n,\ell}(\Lambda)$ and Lemma \ref{fctzetaablem} we see that the quantity $\psi_{a,b,\ell}(n)$ defined in \eqref{defpsiabln} is always positive when $\ell\neq0$ and null when $\ell=0$. The positivity of the second term in the right-hand side of \eqref{defpsiabln} shows that $(\psi_{a,b,\ell},sgn(\ell))$ satisfies \eqref{condlaplacefinitenosign}. Since $\Es^{\beta_{a,b}}_{1}(sgn(\ell)\psi_{a,b,\ell})=1$, we see that \eqref{targetsequence0} holds true for $n=1$. By induction, for $k \geq 2$, if \eqref{targetsequence0} holds true for all $n \in \{1,\ldots,k-1\}$ then we can re-write \eqref{defpsiabln}, at $n=k$, as
\begin{align*}
sgn(\ell)\psi_{a,b,\ell}(k) = \lambda_k(\beta_{a,b}) - \frac{\Gamma(b+k-1)\Gamma(b+\ell)}{\Gamma(b+\ell+k-1)\Gamma(b)} \sum_{j=1}^{k-1} \binom{k}{j-1} \lambda_{k,k-j+1}(\beta_{a,b}) \Es^{\beta_{a,b}}_{j}(\psi_{a,b,\ell}). 
\end{align*}
Comparing this with Lemma \ref{relrecfunctional} (applied at $n=k$) we obtain that \eqref{targetsequence0} holds for $n=k$, concluding the proof by induction. Then, \eqref{targetsequencefinal} follows from \eqref{targetsequence0} and Lemma \ref{equivquotientgamma}. 
\end{proof}

The following lemma provides the asymptotic behavior of the sequence $\psi_{a,b,\ell}$ from Lemma \ref{targetsequence}. 
\begin{lemma} \label{behavspecialpsi}
Let $\psi_{a,b,\ell}$ be as in \eqref{defpsiabln} and $L_{a,b}(\cdot)$ as in \eqref{deffcttoinvert}. 
\begin{itemize}
\item For any $a \in (0,2) \setminus \{1\}$, $b>0$ and $\ell \in (-b,\infty)$, we have 
\begin{align}
sgn(\ell) \psi_{a,b,\ell}(n) & = \frac{\Gamma(a) \ell}{(2-a)(1-a)} n^{1-a} + L_{a,b}(\ell) + \underset{n \rightarrow \infty}{o}\left (n^{0\wedge (1-a)}\right ). \label{behavspecialpsi3}
\end{align}
\item For $a=1$, $b>0$, $\ell \in (-b,\infty)$, we have $sgn(\ell) \psi_{a,b,\ell}(n) \sim \ell \log n$ as $n \rightarrow \infty$. 
\item For $a\geq 2$, $b>0$, $\ell \in (-b,\infty)$, we have $sgn(\ell) \psi_{a,b,\ell}(n) \to L_{a,b}(\ell)$ as $n \rightarrow \infty$. 
\end{itemize}
\end{lemma}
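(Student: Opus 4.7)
The first step is a decisive algebraic simplification of the sum defining $\psi_{a,b,\ell}(n)$. Using the explicit form $\lambda_{n,m}(\beta_{a,b})=\Gamma(a+m-2)\Gamma(b+n-m)/\Gamma(a+b+n-2)$ from \eqref{exprelambdapk}, one verifies the telescoping identity
\begin{align*}
\lambda_{n,m}(\beta_{a,b})\,\frac{\Gamma(b+\ell+n-m)}{\Gamma(b+n-m)}=\frac{\Gamma(a+b+\ell+n-2)}{\Gamma(a+b+n-2)}\,\lambda_{n,m}(\beta_{a,b+\ell}).
\end{align*}
After the substitution $m=n-j+1$ in \eqref{defpsiabln} this identity is applied term-by-term; the prefactor $\Gamma(a+b+\ell+n-2)/\Gamma(a+b+n-2)$ is $m$-independent and factors out, and recognizing $\sum_{m=2}^n \binom{n}{m}\lambda_{n,m}(\beta_{a,b+\ell})=\lambda_n(\beta_{a,b+\ell})$ leaves the compact identity
\begin{align*}
sgn(\ell)\,\psi_{a,b,\ell}(n)=\lambda_n(\beta_{a,b})-R_n\,\lambda_n(\beta_{a,b+\ell}),\quad R_n:=\frac{\Gamma(b+n-1)\,\Gamma(a+b+\ell+n-2)}{\Gamma(a+b+n-2)\,\Gamma(b+\ell+n-1)}.
\end{align*}
This is the concrete realization of the ``statistical mechanics'' trick of Remark \ref{statmechtrick}: $\psi_{a,b,\ell}(n)$ is reduced to the comparison of two ``partition functions'' $\lambda_n(\beta_{a,\cdot})$ evaluated at $b$ and $b+\ell$.

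The remainder of the proof is then an asymptotic analysis of the right-hand side, based on (i) the expansions of $\lambda_n(\beta_{a,b})$ gathered in Appendix \ref{coeflambdanab} and (ii) standard asymptotics of ratios of Gamma functions (Appendix \ref{fctzetaab}) applied to $R_n$. The latter give $R_n=1+\ell(a-1)/n+O(1/n^2)$. The crucial structural feature of the former is that, when $a\in(0,2)\setminus\{1\}$, the leading behaviour $\lambda_n(\beta_{a,b})\sim\Gamma(a)/(2-a)\cdot n^{2-a}$ has a coefficient \emph{independent of $b$}, so the leading contributions of the two $\lambda_n$'s cancel in the difference; one therefore needs to retain two further orders (a $b$-dependent correction of order $n^{1-a}$ and a constant term matching $D_{a,b}$ by analytic continuation from the $a>2$ case).

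The case analysis proceeds as follows. For $a>2$, both $\lambda_n$'s converge to their finite limits and $R_n\to 1$, producing the limit $D_{a,b}-D_{a,b+\ell}$, which a direct computation from \eqref{deffcttoinvert} identifies with $L_{a,b}(\ell)$ (the key relation being $\Gamma(a)/[(2-a)(a-1)]\cdot\Gamma(b)/\Gamma(a+b-2)=-D_{a,b}$). The case $a=2$ is analogous, the Digamma function emerging naturally from the $b$-dependent constant in the expansion of $\lambda_n(\beta_{2,b})$. For $a\in(0,2)\setminus\{1\}$, after the cancellation of the leading $n^{2-a}$ contribution, combining the $n^{1-a}$ corrections of the two $\lambda_n$'s with the $\ell(a-1)/n$ term of $R_n$ acting on the $n^{2-a}$ leading term of $\lambda_n(\beta_{a,b+\ell})$ yields the coefficient $\Gamma(a)\ell/[(2-a)(1-a)]$ in front of $n^{1-a}$, and the same manipulation identifies the remaining constant term with $L_{a,b}(\ell)$. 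For $a=1$, the leading behaviour is of order $n\log n$ rather than $n^{2-a}$, and the cancellation leaves a logarithmic contribution $\ell\log n$ arising from $(1-R_n)$ acting on $\lambda_n(\beta_{1,b})$.

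The main technical obstacle is the precision required in the expansions of $\lambda_n(\beta_{a,b})$ in the regime $a\in(0,2)\setminus\{1\}$: one must track not only the leading order but two further terms and verify that, after cancellation and combination with the expansion of $R_n$, the resulting algebraic expression matches exactly the coefficients $\Gamma(a)\ell/[(2-a)(1-a)]$ and $L_{a,b}(\ell)$. Fortunately, the relevant expansions are already provided in exploitable form by Appendix \ref{coeflambdanab}, so this reduces to careful bookkeeping.
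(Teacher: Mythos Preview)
Your approach is essentially identical to the paper's: the key identity $sgn(\ell)\,\psi_{a,b,\ell}(n)=\lambda_n(\beta_{a,b})-R_n\,\lambda_n(\beta_{a,b+\ell})$ is exactly the paper's equation \eqref{exprquotient} rewritten (the paper divides through by $\lambda_n(\beta_{a,b})$ and works with the ratio, but this is cosmetic), and the subsequent case-by-case asymptotic analysis via Appendices \ref{coeflambdanab}--\ref{fctzetaab} follows the same route.

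One small inaccuracy in your description of the $a=1$ case: the leading order of $\lambda_n(\beta_{1,b})$ is $n$, not $n\log n$, and since $R_n=1+\ell(a-1)/n+O(1/n^2)$ reduces to $1+O(1/n^2)$ when $a=1$, the term $(1-R_n)\lambda_n(\beta_{1,b})$ is $O(1/n)$ and cannot produce the $\ell\log n$ contribution. The latter comes instead from the $b$-dependent subleading term in the difference $\lambda_n(\beta_{1,b})-\lambda_n(\beta_{1,b+\ell})=\big[-(b-1)+(b+\ell-1)\big]\log n+o(\log n)=\ell\log n+o(\log n)$; this is harmless for the overall argument but worth correcting in the write-up.
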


\begin{proof}
Using \eqref{exprelambdapk} we see that 
\begin{align}
\lambda_{n,n-j+1}(\beta_{a,b}) \frac{\Gamma(b+\ell+j-1)}{\Gamma(b+j-1)} & = \frac{\Gamma(a+n-j-1)\Gamma(b+j-1) \Gamma(b+\ell+j-1)}{\Gamma(a+b+n-2) \Gamma(b+j-1)} \nonumber \\ 
& = \frac{\Gamma(a+b+\ell+n-2)}{\Gamma(a+b+n-2)} \frac{\Gamma(a+n-j-1) \Gamma(b+\ell+j-1)}{\Gamma(a+b+\ell+n-2)} \nonumber \\ 
& = \frac{\Gamma(a+b+\ell+n-2)}{\Gamma(a+b+n-2)} \lambda_{n,n-j+1}(\beta_{a,b+\ell}). \label{exprlambdanj}
\end{align}
Using \eqref{defpsiabln} and combining with \eqref{exprlambdanj} and the identity $\lambda_n(\Lambda)=\sum_{\ell =2}^n \binom{n}{\ell}\lambda_{n,\ell}(\Lambda)$, we get 
\begin{align}
1-sgn(\ell) \frac{\psi_{a,b,\ell}(n)}{\lambda_n(\beta_{a,b})} & = \frac{\Gamma(b+n-1)}{\Gamma(b+\ell+n-1)} \frac1{\lambda_n(\beta_{a,b})} \sum_{j=1}^{n-1} \binom{n}{j-1} \lambda_{n,n-j+1}(\beta_{a,b}) \frac{\Gamma(b+\ell+j-1)}{\Gamma(b+j-1)} \nonumber \\
& = \frac{\Gamma(b+n-1)}{\Gamma(b+\ell+n-1)} \frac{\Gamma(a+b+\ell+n-2)}{\Gamma(a+b+n-2)} \frac1{\lambda_n(\beta_{a,b})} \sum_{j=1}^{n-1} \binom{n}{j-1} \lambda_{n,n-j+1}(\beta_{a,b+\ell}) \nonumber \\
& = \frac{\Gamma(b+n-1)}{\Gamma(b+\ell+n-1)} \frac{\Gamma(a+b+\ell+n-2)}{\Gamma(a+b+n-2)} \frac{\lambda_{n}(\beta_{a,b+\ell})}{\lambda_n(\beta_{a,b})}. \label{exprquotient}
\end{align}
Using Lemma \ref{equivquotientgamma} and that $a>0$, we get 
\begin{align}
\frac{\Gamma(b+n-1)}{\Gamma(b+\ell+n-1)} & = n^{-\ell} \left ( 1 - \frac{\ell (2b-3+\ell)}{2n} + \underset{n \rightarrow \infty}{o}\left (\frac1{n^{1\vee(2-a)}}\right ) \right ), \label{asympquotient1} \\
\frac{\Gamma(a+b+\ell+n-2)}{\Gamma(a+b+n-2)} & = n^{\ell} \left ( 1 + \frac{\ell (2a+2b+\ell-5)}{2n} + \underset{n \rightarrow \infty}{o}\left (\frac1{n^{1\vee(2-a)}}\right ) \right ). \label{asympquotient2}
\end{align}
We now assume that $a \in (0,2) \setminus \{1\}$. We get from Lemma \ref{equivlambdanab} that
\begin{align}
\frac{\lambda_{n}(\beta_{a,b+\ell})}{\lambda_n(\beta_{a,b})} = 1 - \frac{(2-a)a \ell}{n(1-a)} + \frac{J(a,b+\ell)-J(a,b)}{n^{2-a}} + \underset{n \rightarrow \infty}{o}\left (\frac1{n^{1\vee(2-a)}}\right ). \label{asympquotient3}
\end{align}
Plugging \eqref{asympquotient1}, \eqref{asympquotient2} and \eqref{asympquotient3} into \eqref{exprquotient} we get, after a few cancellations, 
\begin{align}
1-sgn(\ell) \frac{\psi_{a,b,\ell}(n)}{\lambda_n(\beta_{a,b})} = 1 - \frac{\ell}{n(1-a)} + \frac{J(a,b+\ell)-J(a,b)}{n^{2-a}} + \underset{n \rightarrow \infty}{o}\left (\frac1{n^{1\vee(2-a)}}\right ). \label{asympquotient0}
\end{align}
Simplifying the $1$ on both sides of \eqref{asympquotient0}, multiplying by $-\lambda_n(\beta_{a,b})$, using Lemma \ref{equivlambdanab} again, and using the definitions of $J(\cdot,\cdot)$ and $L_{a,b}(\cdot)$ (in respectively Lemma \ref{equivlambdanab} and \eqref{deffcttoinvert}) we obtain \eqref{behavspecialpsi3}. 

We now assume that $a=1$ and get from Lemma \ref{equlambdnabspecialvalues} that
\begin{align}
\frac{\lambda_{n}(\beta_{a,b+\ell})}{\lambda_n(\beta_{a,b})} = 1 - \ell \frac{\log n}{n} +  \underset{n \rightarrow \infty}{o} \left (\frac{\log n}{n}\right ). \label{asympquotient3aequal1}
\end{align}
Plugging \eqref{asympquotient1}, \eqref{asympquotient2} and \eqref{asympquotient3aequal1} into \eqref{exprquotient} we get, 
\begin{align}
1-sgn(\ell) \frac{\psi_{a,b,\ell}(n)}{\lambda_n(\beta_{a,b})} = 1 - \ell \frac{\log n}{n} +  \underset{n \rightarrow \infty}{o} \left (\frac{\log n}{n}\right ). \label{asympquotient0aequal1}
\end{align}
Simplifying the $1$ on both sides of \eqref{asympquotient0aequal1}, multiplying by $-\lambda_n(\beta_{a,b})$, and using Lemma \ref{equlambdnabspecialvalues} again we obtain $sgn(\ell) \psi_{a,b,\ell}(n) \sim \ell \log n$ as claimed. 

We now assume that $a=2$ and get from Lemma \ref{equlambdnabspecialvalues} that
\begin{align}
\frac{\lambda_{n}(\beta_{a,b+\ell})}{\lambda_n(\beta_{a,b})} = 1 + \frac{H(b+\ell)-H(b)}{\log n} + \underset{n \rightarrow \infty}{o}\left (\frac1{\log n}\right ). \label{asympquotient3aequal2}
\end{align}
Plugging \eqref{asympquotient1}, \eqref{asympquotient2} and \eqref{asympquotient3aequal2} into \eqref{exprquotient} we get, 
\begin{align}
1-sgn(\ell) \frac{\psi_{a,b,\ell}(n)}{\lambda_n(\beta_{a,b})} = 1 + \frac{H(b+\ell)-H(b)}{\log n} + \underset{n \rightarrow \infty}{o}\left (\frac1{\log n}\right ). \label{asympquotient0aequal2}
\end{align}
Then we can conclude as in the other cases and get $sgn(\ell) \psi_{a,b,\ell}(n) \to L_{2,b}(\ell)$ as claimed. 

We now assume that $a>2$ and get from Lemma \ref{equlambdnabspecialvalues} that
\begin{align}
\frac{\lambda_{n}(\beta_{a,b+\ell})}{\lambda_n(\beta_{a,b})} \underset{n \rightarrow \infty}{\longrightarrow} \frac{\Gamma(b+\ell)\Gamma(a+b-2)}{\Gamma(b)\Gamma(a+b+\ell-2)}. \label{asympquotient3a>2}
\end{align}
Plugging \eqref{asympquotient1}, \eqref{asympquotient2} and \eqref{asympquotient3a>2} into \eqref{exprquotient} we get, 
\begin{align}
1-sgn(\ell) \frac{\psi_{a,b,\ell}(n)}{\lambda_n(\beta_{a,b})} \underset{n \rightarrow \infty}{\longrightarrow} \frac{\Gamma(b+\ell)\Gamma(a+b-2)}{\Gamma(b)\Gamma(a+b+\ell-2)}. \label{asympquotient0a>2}
\end{align}
Then we can conclude as in the other cases and get $sgn(\ell) \psi_{a,b,\ell}(n) \to L_{a,b}(\ell)$ as claimed. 
\end{proof}

The following lemma is analogue to Lemma \ref{targetsequence} but for $\Es^{\Lambda}_{n,k}(\cdot)$. Since the proof is identical to the proof of Lemma \ref{targetsequence} we omit it. 
\begin{lemma} \label{targetsequenceonak}
For any $a,b>0$, $\ell \in (-b,\infty)$, and $k \geq 2$ we set $\psi_{a,b,\ell,k}(n)=0$ when $n\in \{2,\ldots,k\}$ and, for $n>k$, 
\begin{align}
sgn(\ell) \psi_{a,b,\ell,k}(n) := \lambda_n(\beta_{a,b}) - \sum_{j=k}^{n-1} \binom{n}{j-1} \lambda_{n,n-j+1}(\beta_{a,b}) \frac{\Gamma(b+n-1)\Gamma(b+\ell+j-1)}{\Gamma(b+\ell+n-1)\Gamma(b+j-1)}, \label{defpsiablnonak}
\end{align}
where $sgn(\ell):=1$ if $\ell \geq 0$ and $sgn(\ell):=-1$ if $\ell<0$. Then $(\psi_{a,b,\ell,k},sgn(\ell))$ satisfies \eqref{condlaplacefinitenosign}, and, for any $n \geq k$, we have 
\begin{align}
\Es^{\beta_{a,b}}_{n,k}(sgn(\ell)\psi_{a,b,\ell,k}) = \frac{\Gamma(b+\ell+n-1)\Gamma(b+k-1)}{\Gamma(b+n-1)\Gamma(b+\ell+k-1)}, \label{targetsequence0onak}
\end{align}
so in particular 
\begin{align}
\Es^{\beta_{a,b}}_{n,k}(sgn(\ell)\psi_{a,b,\ell,k}) \underset{n \rightarrow \infty}{\sim} \frac{\Gamma(b+k-1)}{\Gamma(b+\ell+k-1)} n^{\ell}. \label{targetsequencefinalonak}
\end{align}
\end{lemma}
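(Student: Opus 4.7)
The plan is to follow the same strategy as the proof of Lemma \ref{targetsequence}, with two modifications: the induction on $n$ is initialized at $n=k$ rather than $n=1$, and the recursion \eqref{relrecfunctional0} is replaced by its restricted counterpart \eqref{relrecfunctional0onak}. The target formula \eqref{targetsequence0onak} will emerge from the induction, and the asymptotic \eqref{targetsequencefinalonak} is then immediate from Lemma \ref{equivquotientgamma}.

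First I would verify that $(\psi_{a,b,\ell,k}, sgn(\ell))$ satisfies \eqref{condlaplacefinitenosign}. For $n \in \{2,\ldots,k\}$ this is immediate since $\psi_{a,b,\ell,k}(n)=0$. For $n > k$, the defining identity \eqref{defpsiablnonak} expresses $sgn(\ell)\psi_{a,b,\ell,k}(n)$ as $\lambda_n(\beta_{a,b})$ minus a strictly positive sum (each Gamma ratio is positive thanks to $\ell > -b$ and $j \geq k \geq 2$), yielding $sgn(\ell)\psi_{a,b,\ell,k}(n) < \lambda_n(\beta_{a,b})$.

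The base case $n=k$ is straightforward: since $\psi_{a,b,\ell,k}(k)=0$, the identity $\Es^{\Lambda}_{k,k}(\theta \psi) = (1-\theta\psi(k)/\lambda_k(\Lambda))^{-1}$ noted just after \eqref{defexpfunctionalonak} yields $\Es^{\beta_{a,b}}_{k,k}(sgn(\ell)\psi_{a,b,\ell,k}) = 1$, which matches the right-hand side of \eqref{targetsequence0onak} evaluated at $n=k$. For the induction step at level $n \geq k+1$, I would substitute the induction hypothesis into \eqref{relrecfunctional0onak}, factor out the constant $\Gamma(b+k-1)/\Gamma(b+\ell+k-1)$, and recognize the remaining inner sum via \eqref{defpsiablnonak} as
\[
\frac{\Gamma(b+\ell+n-1)}{\Gamma(b+n-1)}\bigl(\lambda_n(\beta_{a,b}) - sgn(\ell)\psi_{a,b,\ell,k}(n)\bigr).
\]
The prefactor $(1-sgn(\ell)\psi_{a,b,\ell,k}(n)/\lambda_n(\beta_{a,b}))^{-1}$ then cancels against the parenthesis, and \eqref{targetsequence0onak} at index $n$ falls out.

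I do not anticipate any genuine obstacle: the argument is structurally identical to that of Lemma \ref{targetsequence}, and the only bookkeeping to get right is the shift of the base case from $n=1$ to $n=k$ together with the vanishing $\psi_{a,b,\ell,k}(k)=0$, and the alignment of the truncated sum in \eqref{defpsiablnonak} (starting at $j=k$) with the truncated sum in the recursion \eqref{relrecfunctional0onak} (also starting at $j=k$). This alignment is precisely what makes the identity \eqref{targetsequence0onak} come out cleanly.
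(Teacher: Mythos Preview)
Your proposal is correct and follows essentially the same approach as the paper, which in fact omits the proof and states that it is identical to that of Lemma \ref{targetsequence}. Your handling of the shifted base case $n=k$ (via $\psi_{a,b,\ell,k}(k)=0$ and $\Es^{\beta_{a,b}}_{k,k}(0)=1$) and the alignment between the truncated sum in \eqref{defpsiablnonak} and the recursion \eqref{relrecfunctional0onak} is exactly the adaptation intended.
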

We note that the sequence $\psi_{a,b,\ell,k}(\cdot)$ from Lemma \ref{targetsequenceonak} is not guaranteed to be non-negative everywhere but this will not be an issue. The following lemma is analogue to Lemma \ref{behavspecialpsi} but for $\psi_{a,b,\ell,k}$. Since we only need the result in the case $a>1$, the lemma only studies this case. 
\begin{lemma} \label{behavspecialpsionak}
For any $a>1$, $b>0$, $\ell \in (-b,\infty)$ and $k\geq 2$, let $\psi_{a,b,\ell,k}$ be as in \eqref{defpsiablnonak}. Then we have $sgn(\ell) \psi_{a,b,\ell,k}(n) \to L_{a,b}(\ell)$ as $n \to \infty$, 
where $L_{a,b}(\cdot)$ is as in \eqref{deffcttoinvert}. 
\end{lemma}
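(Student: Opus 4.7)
\textbf{Proof proposal for Lemma \ref{behavspecialpsionak}.} The strategy is to reduce the statement to Lemma \ref{behavspecialpsi} by expressing $\psi_{a,b,\ell,k}$ as $\psi_{a,b,\ell}$ plus an explicit finite correction that I will show vanishes as $n \to \infty$. Comparing the definitions \eqref{defpsiabln} and \eqref{defpsiablnonak}, the only difference is the starting index of the sum, so for every $n > k$,
\begin{align*}
sgn(\ell)\psi_{a,b,\ell,k}(n) = sgn(\ell)\psi_{a,b,\ell}(n) + \sum_{j=1}^{k-1}\binom{n}{j-1}\lambda_{n,n-j+1}(\beta_{a,b})\frac{\Gamma(b+n-1)\Gamma(b+\ell+j-1)}{\Gamma(b+\ell+n-1)\Gamma(b+j-1)}.
\end{align*}

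First I invoke Lemma \ref{behavspecialpsi}: since $a > 1$, in each of the subcases $a \in (1,2)$, $a = 2$, and $a > 2$, the lemma yields $sgn(\ell)\psi_{a,b,\ell}(n) \to L_{a,b}(\ell)$ as $n \to \infty$. In the subcase $a \in (1,2)$ this crucially uses that the leading term $\Gamma(a)\ell/[(2-a)(1-a)] \cdot n^{1-a}$ from \eqref{behavspecialpsi3} vanishes because $1-a < 0$. In the subcases $a = 2$ and $a > 2$ the convergence to $L_{a,b}(\ell)$ is given directly by the last two bullet points of Lemma \ref{behavspecialpsi}.

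Next I show that each of the finitely many correction terms tends to $0$. Fix $j \in \{1,\ldots,k-1\}$. From \eqref{exprelambdapk},
\begin{align*}
\lambda_{n,n-j+1}(\beta_{a,b}) = \frac{\Gamma(a+n-j-1)\Gamma(b+j-1)}{\Gamma(a+b+n-2)},
\end{align*}
and Lemma \ref{equivquotientgamma} gives the elementary asymptotics $\binom{n}{j-1} \sim n^{j-1}/(j-1)!$, $\Gamma(a+n-j-1)/\Gamma(a+b+n-2) \sim n^{1-b-j}$, and $\Gamma(b+n-1)/\Gamma(b+\ell+n-1) \sim n^{-\ell}$. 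Multiplying these together, the $j$-th summand is asymptotically a positive constant times $n^{(j-1) + (1-b-j) - \ell} = n^{-(b+\ell)}$. Since $\ell > -b$, the exponent is strictly negative, so each of the $k-1$ correction terms tends to $0$. Combining this with the previous paragraph yields $sgn(\ell)\psi_{a,b,\ell,k}(n) \to L_{a,b}(\ell)$, as desired. No genuine obstacle is expected: the statement is a direct corollary of Lemma \ref{behavspecialpsi} together with standard ratio asymptotics for the Gamma function, and the only point requiring mild care is verifying that the leading $n^{1-a}$ term in the $a \in (1,2)$ subcase indeed vanishes, which is precisely where the hypothesis $a > 1$ enters.
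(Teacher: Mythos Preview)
Your proof is correct. The approach is closely related to the paper's but organized slightly differently: the paper re-derives the analogue of \eqref{exprquotient} for $\psi_{a,b,\ell,k}$, obtaining an expression involving $\lambda_n(\beta_{a,b+\ell})-\sum_{j=1}^{k-1}\binom{n}{j-1}\lambda_{n,n-j+1}(\beta_{a,b+\ell})$ in the numerator, and then shows that the extra sum is $O(n^{-(b+\ell)})$ before concluding as in Lemma \ref{behavspecialpsi}. You instead use Lemma \ref{behavspecialpsi} as a black box and bound the difference $sgn(\ell)(\psi_{a,b,\ell,k}(n)-\psi_{a,b,\ell}(n))$ directly. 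The two computations are equivalent (via the identity \eqref{exprlambdanj} the correction terms match up exactly), and the key estimate --- that each of the finitely many extra terms is of order $n^{-(b+\ell)}$ with $b+\ell>0$ --- is the same in both. Your organization is arguably a bit cleaner since it avoids repeating the intermediate manipulations of Lemma \ref{behavspecialpsi}.
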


\begin{proof}
Using \eqref{defpsiablnonak} and proceeding as in \eqref{exprquotient} we get 
\begin{align}
1-sgn(\ell) \frac{\psi_{a,b,\ell,k}(n)}{\lambda_n(\beta_{a,b})} & = \frac{\Gamma(b+n-1)}{\Gamma(b+\ell+n-1)} \frac{\Gamma(a+b+\ell+n-2)}{\Gamma(a+b+n-2)} \nonumber \\
& \times \frac{\lambda_{n}(\beta_{a,b+\ell})-\sum_{j=1}^{k-1} \binom{n}{j-1} \lambda_{n,n-j+1}(\beta_{a,b+\ell})}{\lambda_n(\beta_{a,b})}. \label{exprquotientonak}
\end{align}
By \eqref{exprelambdapk} and Lemma \ref{equivquotientgamma} we have 
\begin{align}
\sum_{j=1}^{k-1} \binom{n}{j-1} \lambda_{n,n-j+1}(\beta_{a,b+\ell}) = \frac{C}{n^{b+\ell}} + \underset{n \rightarrow \infty}{o} \left (\frac1{n^{b+\ell}}\right ), \label{extraterms}
\end{align}
where $C$ is a positive constant whose precise value is unimportant. Combining \eqref{extraterms} with Lemma \ref{equivlambdanab} we get, when $a \in (1,2)$,  
\begin{align}
\frac{\lambda_{n}(\beta_{a,b+\ell})-\sum_{j=1}^{k-1} \binom{n}{j-1} \lambda_{n,n-j+1}(\beta_{a,b+\ell})}{\lambda_n(\beta_{a,b})} = 1 + \frac{J(a,b+\ell)-J(a,b)}{n^{2-a}} + \underset{n \rightarrow \infty}{o}\left (\frac1{n^{2-a}}\right ). \label{asympquotient3onak}
\end{align}
Plugging \eqref{asympquotient1}, \eqref{asympquotient2} and \eqref{asympquotient3onak} into \eqref{exprquotientonak} we get, after a few cancellations, 
\begin{align*}
1-sgn(\ell) \frac{\psi_{a,b,\ell,k}(n)}{\lambda_n(\beta_{a,b})} = 1 + \frac{J(a,b+\ell)-J(a,b)}{n^{2-a}} + \underset{n \rightarrow \infty}{o}\left (\frac1{n^{2-a}}\right ), 
\end{align*}
so, when $a \in (1,2)$, we can conclude as in the proof of Lemma \ref{behavspecialpsi}. Similarly, when $a=2$ (resp. $a>2$), combining \eqref{extraterms} with Lemma \ref{equlambdnabspecialvalues} we get that the estimate \eqref{asympquotient3aequal2} (resp. \eqref{asympquotient3a>2}) still holds when $\lambda_{n}(\beta_{a,b+\ell})/\lambda_n(\beta_{a,b})$ is replaced by the left-hand side of \eqref{asympquotient3onak}. We then deduce that \eqref{asympquotient0aequal2} (resp. \eqref{asympquotient0a>2}) holds true when $\psi_{a,b,\ell}(n)$ is replaced by $\psi_{a,b,\ell,k}(n)$ and we can conclude as in the proof of Lemma \ref{behavspecialpsi}. 
\end{proof}

\subsection{Proof of Theorems \ref{asymptfunctional} and \ref{asymptfunctionalbis}}

We can now put pieces together and prove the main results about the asymptotic behavior of the Laplace transforms of the integral functionals. 

\begin{proof}[Proof of Theorem \ref{asymptfunctionalbis}]
Let $a,b, d$ and $\psi$ be as in the statement of the theorem. In particular $\psi$ satisfies \eqref{condpsiprecise0} with this value of $d$. We set $\ell:=-d(1-a)$ and consider $\psi_{a,b,\ell}$ as defined in \eqref{defpsiabln}. By \eqref{asympquotient0} we get that $\psi_{a,b,\ell}$ also satisfies \eqref{condpsiprecise0} with the current value of $d$. Therefore the couple of sequences $(\psi, \psi_{a,b,\ell})$ satisfies \eqref{condsummability} so, by Lemma \ref{stabilityprinciple}, there is $C>1$ such that for all $n \geq 1$, $C \Es^{\beta_{a,b}}_{n}(-\psi_{a,b,\ell}) \geq \Es^{\beta_{a,b}}_{n}(-\psi) \geq C^{-1} \Es^{\beta_{a,b}}_{n}(-\psi_{a,b,\ell})$. 
The combination of this and \eqref{targetsequencefinal} yields \eqref{asymptfunctionalprecise0}, concluding the proof. 
\end{proof}

\begin{proof}[Proof of Theorem \ref{asymptfunctional}]
\textit{Case $a \in (0,1)$, $b>0$.} Let $\psi:\{2,3,\ldots\} \rightarrow [0,\infty)$ and $c>0$ such that \eqref{condpsi0} holds true. Let also $\theta \in (-b\Gamma(a)/c(2-a)(1-a),M^{\beta_{a,b}}(\psi))$. We set $\ell_1(\epsilon):=-\epsilon+\theta c(2-a)(1-a)/\Gamma(a)$ and $\ell_2(\epsilon):=\epsilon+\theta c(2-a)(1-a)/\Gamma(a)$, where $\epsilon>0$ is chosen small enough so that $\ell_1(\epsilon)>-b$ and $\ell_1(\epsilon),\ell_2(\epsilon)\neq0$. We consider $\psi_{a,b,\ell_1(\epsilon)}$ and $\psi_{a,b,\ell_2(\epsilon)}$ as defined in \eqref{defpsiabln}, but with $\ell$ replaced by respectively $\ell_1(\epsilon)$ and $\ell_2(\epsilon)$. By Lemma \ref{behavspecialpsi} we get that $\psi_{a,b,\ell_1(\epsilon)}$ and $\psi_{a,b,\ell_2(\epsilon)}$ satisfy 
\begin{align*}
& sgn(\ell_1(\epsilon)) \psi_{a,b,\ell_1(\epsilon)}(n) \underset{n \rightarrow \infty}{\sim} \left ( \theta c - \frac{\Gamma(a) \epsilon}{(2-a)(1-a)} \right ) n^{1-a}, \\ 
& sgn(\ell_2(\epsilon)) \psi_{a,b,\ell_2(\epsilon)}(n) \underset{n \rightarrow \infty}{\sim} \left ( \theta c + \frac{\Gamma(a) \epsilon}{(2-a)(1-a)} \right ) n^{1-a}. 
\end{align*}
Therefore we have $sgn(\ell_1(\epsilon)) \psi_{a,b,\ell_1(\epsilon)}(n) \leq \theta \psi(n) \leq sgn(\ell_2(\epsilon)) \psi_{a,b,\ell_2(\epsilon)}(n)$ for all large $n$. We deduce from Lemma \ref{comparisonprinciple} that there are constants $c_1, c_2>0$ such that, for all $n \geq 1$, 
\begin{align}
c_1 \Es^{\beta_{a,b}}_{n}(sgn(\ell_1(\epsilon)) \psi_{a,b,\ell_1(\epsilon)}) \leq \Es^{\beta_{a,b}}_{n}(\theta \psi) \leq c_2 \Es^{\beta_{a,b}}_{n}(sgn(\ell_2(\epsilon)) \psi_{a,b,\ell_2(\epsilon)}). \label{compineqfinalthetapos}
\end{align}
Recall from Lemma \ref{targetsequence} that $\Es^{\beta_{a,b}}_{n}(sgn(\ell_1(\epsilon)) \psi_{a,b,\ell_1(\epsilon)})$ and $\Es^{\beta_{a,b}}_{n}(sgn(\ell_2(\epsilon)) \psi_{a,b,\ell_2})$ satisfy \eqref{targetsequencefinal} (but with $\ell$ replaced by respectively $\ell_1(\epsilon)$ and $\ell_2(\epsilon)$). Now taking the logarithm in \eqref{compineqfinalthetapos}, diving by $\log n$, and using the \eqref{targetsequencefinal} we get 
\begin{align*}
\ell_1(\epsilon) \leq \liminf_{n \rightarrow \infty} \frac{\log \Es^{\beta_{a,b}}_{n}(\theta \psi)}{\log n} \leq \limsup_{n \rightarrow \infty} \frac{\log \Es^{\beta_{a,b}}_{n}(\theta \psi)}{\log n} \leq \ell_2(\epsilon). 
\end{align*}
Then letting $\epsilon$ go to $0$ yields \eqref{asymptfunctional0}. This concludes the proof in the case $a \in (0,1)$, $b>0$. 

\textit{Case $a=1$, $b>0$.} This case is proved exactly as the previous case, using that in this case the estimate $sgn(\ell) \psi_{a,b,\ell}(n) \sim \ell \log n$ from Lemma \ref{behavspecialpsi} holds instead of \eqref{behavspecialpsi3}. 

\textit{Case $a>1$, $b>0$.} Let $\psi:\{2,3,\ldots\} \rightarrow [0,\infty)$ and $c>0$ such that \eqref{condpsi02} holds true. Let also $\theta \in (-\infty,M^{\beta_{a,b}}(\psi))$. We set $\ell_1(\epsilon):=-\epsilon+\zeta_{a,b}(\theta c)$ and $\ell_2(\epsilon):=\epsilon+\zeta_{a,b}(\theta c)$, where $\epsilon>0$ is chosen small enough so that $\ell_1(\epsilon)>-b$ and $\ell_1(\epsilon),\ell_2(\epsilon)\neq0$. As before we consider $\psi_{a,b,\ell_1(\epsilon)}$ and $\psi_{a,b,\ell_2(\epsilon)}$ and get from Lemma \ref{behavspecialpsi} that $\psi_{a,b,\ell_1(\epsilon)}$ and $\psi_{a,b,\ell_2(\epsilon)}$ satisfy 
\begin{align*}
& sgn(\theta) \psi_{a,b,\ell_1(\epsilon)}(n) \underset{n \rightarrow \infty}{\longrightarrow} L_{a,b}(-\epsilon+\zeta_{a,b}(\theta c)), \\ 
& sgn(\theta) \psi_{a,b,\ell_2(\epsilon)}(n) \underset{n \rightarrow \infty}{\longrightarrow} L_{a,b}(\epsilon+\zeta_{a,b}(\theta c)). 
\end{align*}
Also, as $n \to \infty$, $\theta \psi(n) \to \theta c=L_{a,b}(\zeta_{a,b}(\theta c))$ by \eqref{condpsi02} and definition of $\zeta_{a,b}(\cdot)$. Since $L_{a,b}(\cdot)$ is an increasing function by Lemma \ref{fctzetaablem}, we get $sgn(\ell_1(\epsilon)) \psi_{a,b,\ell_1(\epsilon)}(n) \leq \theta \psi(n) \leq sgn(\ell_2(\epsilon)) \psi_{a,b,\ell_2(\epsilon)}(n)$ for all large $n$. Then we can concludes the proof as in the case $a \in (0,1)$, $b>0$ and get \eqref{asymptfunctional02}. 
\end{proof}


Recall the event $A^n_k$ defined in \eqref{defak}. When $a \in (1,2)$, the following conditional version of Theorem \ref{asymptfunctional} will be useful for the proof of Theorems \ref{thmpgd} and \ref{thmpgdlevelk}. 
\begin{prop} \label{asymptfunctionalonak}
Let us fix $\psi:\{2,3,\ldots\} \rightarrow [0,\infty)$, $a>1$, $b>0$. If there is a constant $c>0$ such that \eqref{condpsi02} is satisfied, then, for any $\theta \in (-\infty,M^{\beta_{a,b}}(\psi))$,  
\begin{align}
\frac{\log \E_{\beta_{a,b}} [ e^{\theta \int_0^{\tau_n} \psi(|\Pi^n_s|) \dd s}|A^n_k ]}{\log n} \underset{n \rightarrow \infty}{\longrightarrow} \zeta_{a,b}(\theta c). \label{asymptfunctional02onak}
\end{align}
\end{prop}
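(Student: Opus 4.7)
The plan is to write
\[
\E_{\beta_{a,b}} [ e^{\theta \int_0^{\tau_n} \psi(|\Pi^n_s|) \dd s} \mid A^n_k ] = \frac{\Es^{\beta_{a,b}}_{n,k}(\theta \psi)}{\mathbb{P}_{\beta_{a,b}}(A^n_k)} = \frac{\Es^{\beta_{a,b}}_{n,k}(\theta \psi)}{\Es^{\beta_{a,b}}_{n,k}(0)},
\]
and to treat numerator and denominator separately using the conditional versions of the tools already developed in Section \ref{studyintfct}. Concretely, it suffices to show that $\log \Es^{\beta_{a,b}}_{n,k}(\theta \psi)/\log n \to \zeta_{a,b}(\theta c)$ and $\log \mathbb{P}_{\beta_{a,b}}(A^n_k)/\log n \to 0$, and to subtract.

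For the first convergence, I would rerun essentially verbatim the squeezing argument from the $a>1$ case of the proof of Theorem \ref{asymptfunctional}, with Lemmas \ref{targetsequenceonak} and \ref{behavspecialpsionak} in place of their unconditioned counterparts. Fix $\epsilon>0$ small enough that $\ell_1(\epsilon):=-\epsilon+\zeta_{a,b}(\theta c)$ and $\ell_2(\epsilon):=\epsilon+\zeta_{a,b}(\theta c)$ both lie in $(-b,\infty)\setminus\{0\}$ (possible since $b>0$ and $\zeta_{a,b}(\theta c)>-b$ by the constraint $\theta<M^{\beta_{a,b}}(\psi)$). Lemma \ref{behavspecialpsionak} yields $sgn(\ell_i(\epsilon))\psi_{a,b,\ell_i(\epsilon),k}(n)\to L_{a,b}(\ell_i(\epsilon))$, and combining with $\theta\psi(n)\to\theta c=L_{a,b}(\zeta_{a,b}(\theta c))$ and the strict monotonicity of $L_{a,b}$ (Lemma \ref{fctzetaablem}) produces, for all large $n$, the two-sided comparison
\[
sgn(\ell_1(\epsilon))\psi_{a,b,\ell_1(\epsilon),k}(n) \;\leq\; \theta \psi(n) \;\leq\; sgn(\ell_2(\epsilon))\psi_{a,b,\ell_2(\epsilon),k}(n).
\]
The conditional form of Lemma \ref{comparisonprinciple}, together with the asymptotic $\Es^{\beta_{a,b}}_{n,k}(sgn(\ell_i(\epsilon))\psi_{a,b,\ell_i(\epsilon),k})\sim C_i n^{\ell_i(\epsilon)}$ supplied by \eqref{targetsequencefinalonak}, then sandwiches $\log \Es^{\beta_{a,b}}_{n,k}(\theta\psi)/\log n$ between $\ell_1(\epsilon)$ and $\ell_2(\epsilon)$; letting $\epsilon\to 0$ delivers the claimed limit $\zeta_{a,b}(\theta c)$.

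For the second convergence, the upper bound $\limsup\leq 0$ is immediate from $\mathbb{P}_{\beta_{a,b}}(A^n_k)\leq 1$. For the matching lower bound, I would apply the conditional Lemma \ref{comparisonprinciple} to the pair $(\theta_1\psi_1,\theta_2\psi_2)=(sgn(\ell)\psi_{a,b,\ell,k},0)$ for arbitrary $\ell\in(-b,0)$: Lemma \ref{behavspecialpsionak} gives $sgn(\ell)\psi_{a,b,\ell,k}(n)\to L_{a,b}(\ell)<0$, so the comparison hypothesis holds for large $n$ and yields $\Es^{\beta_{a,b}}_{n,k}(sgn(\ell)\psi_{a,b,\ell,k})\leq C\,\mathbb{P}_{\beta_{a,b}}(A^n_k)$ for some finite $C$. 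Combined with $\Es^{\beta_{a,b}}_{n,k}(sgn(\ell)\psi_{a,b,\ell,k})\sim C'n^{\ell}$ from \eqref{targetsequencefinalonak}, this forces $\mathbb{P}_{\beta_{a,b}}(A^n_k)\geq c_\ell n^{\ell}$, and passing $\ell\to 0^-$ concludes. The only slightly delicate point is checking that Lemma \ref{comparisonprinciple} still applies at the degenerate choice $\theta_2\psi_2\equiv 0$, which is a direct inspection of its induction proof via the recursion \eqref{relrecfunctional0onak}; once this bookkeeping is done, assembling the two limits completes the argument with no new ideas beyond those of Section \ref{studyintfct}.
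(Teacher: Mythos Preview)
Your proof is correct and follows essentially the same approach as the paper: decompose the conditional Laplace transform as $\Es^{\beta_{a,b}}_{n,k}(\theta\psi)/\Es^{\beta_{a,b}}_{n,k}(0)$ and handle the numerator by rerunning the $a>1$ squeezing argument of Theorem~\ref{asymptfunctional} with Lemmas~\ref{targetsequenceonak}--\ref{behavspecialpsionak} in place of Lemmas~\ref{targetsequence}--\ref{behavspecialpsi}. The only difference is that the paper dispatches the denominator in one line by observing it is the $\theta=0$ instance of the convergence just proved (so $\log\Es^{\beta_{a,b}}_{n,k}(0)/\log n\to\zeta_{a,b}(0)=0$), whereas you re-derive this separately via a one-sided comparison with $\ell\in(-b,0)$; your concern about the ``degenerate'' case $\theta_2\psi_2\equiv0$ in Lemma~\ref{comparisonprinciple} is unfounded, since \eqref{condlaplacefinitenosign} is trivially satisfied there.
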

\begin{proof}
Proceeding as in the proof of Theorem \ref{asymptfunctional} (case $a>1$, $b>0$) but using $\psi_{a,b,\ell_i(\epsilon),k}$ and Lemmas \ref{targetsequenceonak}-\ref{behavspecialpsionak} instead of $\psi_{a,b,\ell_i(\epsilon)}$ and Lemmas \ref{targetsequence}-\ref{behavspecialpsi}, we get that the estimate \eqref{asymptfunctional02} is also satisfied for $\Es^{\beta_{a,b}}_{n,k}(\theta \psi)$ instead of $\Es^{\beta_{a,b}}_{n}(\theta \psi)$. Since $\E_{\beta_{a,b}} [ e^{\theta \int_0^{\tau_n} \psi(|\Pi^n_s|) \dd s}|A^n_k ]=\Es^{\beta_{a,b}}_{n,k}(\theta \psi)/\Es^{\beta_{a,b}}_{n,k}(0)$, and $\zeta_{a,b}(0)=0$, the result follows. 
\end{proof}

\begin{remark} \label{asymptpak}
For $n,k \geq 2$, $\mathbb{P}_{\Lambda} ( A^n_k )$ is the probability that $k$ belongs to the range of the $\Lambda$-coalescent started with $n$ blocks. As a by-product of the previous proof we get that, for $a>1$, $b>0$, and $k \geq 2$, 
\begin{align}
\frac{\log \mathbb{P}_{\beta_{a,b}} ( A^n_k )}{\log n} \underset{n \rightarrow \infty}{\longrightarrow} 0. \label{asymptpak0}
\end{align}
This will come useful in the proof of Theorems \ref{thmpgd}-\ref{thmpgdlevelk}. It is proved in \cite[Cor. 3.6]{10.1214/14-AAP1077} that, in the case where $a+b=2$ (i.e. case of the $Beta(2-\alpha,\alpha)$-coalescent), the probability $\mathbb{P}_{\beta_{a,b}} ( A^n_k )$ converges as $n \to \infty$ and an expression of the limit is given. It is reasonable to bet that such a convergence also holds in the case $a>1$, $b>0$. However, we will not need such a precise estimate in the proof of Theorems \ref{thmpgd}-\ref{thmpgdlevelk}, and \eqref{asymptpak0} will be sufficient for our purpose. 
\end{remark}

\section{Large deviation principle for the absorption time} \label{proofldp}

In this section we prove Theorem \ref{thmpgdlevelk} (which contains Theorem \ref{thmpgd}). For any $k \geq 2$ we set $\psi_k(n):=\mathds{1}_{[k,\infty)}(n)$ and we note that, for any $n \geq 1$, $T^n_k=\int_0^{\tau_n} \psi_k(|\Pi^n_s|) \dd s$. The first step of the proof consists in applying Theorem \ref{asymptfunctional} to this integral functional and in combining that result with the G\"artner-Ellis Theorem. This will yield the upper bound \eqref{ldpupperboundlvlk} and a partial version of the lower bound \eqref{ldplowerboundlvlk}. The second step of the proof consists into completing the lower bound. 

\begin{proof}[Proof of Theorem \ref{thmpgdlevelk}]
We fix $a>1$, $b>0$, and $k \geq 2$. For $\psi_k$ as in the previous discussion we have $M^{\beta_{a,b}}(\psi_k)=\lambda_k(\beta_{a,b})$. By the discussion before \eqref{condlaplacefinitenosign}, if $\theta \geq \lambda_k(\beta_{a,b})$, we have $\Es^{\beta_{a,b}}_{n}(\theta \psi_k)=\infty$ for all $n\geq k$. Also, we trivially have that $\psi_k$ satisfies \eqref{condpsi02} with $c=1$ so the estimate \eqref{asymptfunctional02} from Theorem \ref{asymptfunctional} holds true. Combining all this we get that, for any $\theta \in \mathbb{R}$,  
\begin{align}
\frac{\log \E_{\beta_{a,b}} [ e^{\theta T^n_k} ]}{\log n} \underset{n \rightarrow \infty}{\longrightarrow} \zeta_{a,b}(\theta) + \infty \mathds{1}_{\theta\geq \lambda_k(\beta_{a,b})}, \label{asymptfunctional02psik}
\end{align}
where $\zeta_{a,b}(\cdot)$ is defined in Section \ref{sectionldpmain}, and where we use the convention $\infty \times 0=0$. The function in the right-hand side of \eqref{asymptfunctional02psik} is finite on the domain $(-\infty,\lambda_k(\beta_{a,b}))$ which contains the origin. Therefore it satisfies Assumption 2.3.2 of \cite{dembozeitouni}. Moreover, it is not difficult to see that the Legendre transform of that function is precisely the function $\mathcal{I}^k_{a,b}(\cdot)$ defined in \eqref{defikab}. Therefore, the upper bound in G\"artner-Ellis Theorem (see \cite[Thm. 2.3.6(a)]{dembozeitouni}) yields directly the upper bound \eqref{ldpupperboundlvlk}. 

The lower bound in G\"artner-Ellis Theorem (see \cite[Thm. 2.3.6(b)]{dembozeitouni}) yields that, for any open set $G \subset \mathbb{R}$, 
\begin{align}
\liminf_{n \rightarrow \infty} \frac{1}{\log n} \log \mathbb{P}_{\beta_{a,b}} \left ( \frac{T_n^k}{\log n} \in G \right ) \geq -\inf_{x \in G \cap \mathcal{F}_k} \mathcal{I}^k_{a,b}(x), \label{ldplowerboundlvlkpartial}
\end{align}
where $\mathcal{F}_k$ is, in the terminology of \cite[Def. 2.3.3]{dembozeitouni}, the set of \textit{exposed points} of $\mathcal{I}^k_{a,b}(\cdot)$ whose \textit{exposing hyperplane} belongs to $(-\infty,\lambda_k(\beta_{a,b}))$. 
We see from the discussion before \eqref{defiaslegendrezeta} that $\zeta'_{a,b}(\cdot)$ is continuous on $(-\infty,\lambda_k(\beta_{a,b}))$. Moreover $\zeta'_{a,b}(\theta) \to 0$ as $\theta \to -\infty$ and $\zeta'_{a,b}(\lambda_k(\beta_{a,b}))=x_k$ by the definition of $x_k$ just before \eqref{defikab}. Therefore, for any $x \in (0,x_k)$ there is a $\theta \in (-\infty,\lambda_k(\beta_{a,b}))$ such that $\zeta'_{a,b}(\theta)=x$. According to \cite[Lem. 2.3.9]{dembozeitouni} we deduce that $(0,x_k) \subset \mathcal{F}_k$. 

To prove \eqref{ldplowerboundlvlk} we only need to show that, for any open set $G \subset \mathbb{R}$ and $\epsilon>0$ we have 
\begin{align}
\liminf_{n \rightarrow \infty} \frac{1}{\log n} \log \mathbb{P}_{\beta_{a,b}} \left ( \frac{T_n^k}{\log n} \in G \right ) \geq -\inf_{x \in G} \mathcal{I}^k_{a,b}(x)-\epsilon. \label{ldplowerboundlvlkepsilon}
\end{align}
We thus fix an open set $G \subset \mathbb{R}$ and $\epsilon>0$. If $G \subset (-\infty,0)$ there is nothing to prove so we assume that $G \cap [0,\infty) \neq \emptyset$. If there is $z \in (0,x_k) \cap G$ such that 
\begin{align}
\mathcal{I}^k_{a,b}(z) < \inf_{x \in G} \mathcal{I}^k_{a,b}(x)+\epsilon, \label{approxinf}
\end{align}
then \eqref{ldplowerboundlvlkepsilon} follows from \eqref{ldplowerboundlvlkpartial} and from $(0,x_k) \subset \mathcal{F}_k$. If there is no such $z$ in $(0,x_k)$, then there is $z \in (x_k,\infty) \cap G$ satisfying \eqref{approxinf}. Let us fix $\delta \in (0,z\wedge x_{k+1}-x_k)$ such that $(z-\delta,z+\delta)\subset G$. Recall the event $A^n_k$ defined in \eqref{defak}. By the Markov property at the hitting time of $k$ by $(|\Pi^n_t|)_{t \geq 0}$ we see that, conditionally on $A^n_k$, $T^n_k$ is distributed as $e_k+F_k$ where $e_k$ and $F_k$ are independent, $e_k$ follows an exponential distribution with parameter $\lambda_k(\beta_{a,b})$, and $F_k$ is distributed as $\int_0^{\tau_n} \psi_{k+1}(|\Pi^n_s|) \dd s$ conditionally on $A^n_k$. We thus get 
\begin{align}
\mathbb{P}_{\beta_{a,b}} \left ( \frac{T_n^k}{\log n} \in G \right ) & \geq \mathbb{P}_{\beta_{a,b}} \left ( A^n_k \right ) \times \mathbb{P}_{\beta_{a,b}} \left ( \frac{T_n^k}{\log n} \in (z-\delta,z+\delta) | A^n_k \right ) \nonumber \\
& \geq \mathbb{P}_{\beta_{a,b}} \left ( A^n_k \right ) \times \mathbb{P}_{\beta_{a,b}} \left ( \frac{\int_0^{\tau_n} \psi_{k+1}(|\Pi^n_s|) \dd s}{\log n} \in (x_k-\delta/2,x_k+\delta/2) | A^n_k \right ) \nonumber \\ 
& \qquad \qquad \qquad \times \mathbb{P}_{\beta_{a,b}} \left ( \frac{e_k}{\log n} \in (z-x_k-\delta/2,z-x_k+\delta/2) \right ). \label{manualldplowerbound}
\end{align}
Then we have 
\begin{align*}
\mathbb{P}_{\beta_{a,b}} \left ( \frac{e_k}{\log n} \in (z-x_k-\delta/2,z-x_k+\delta/2) \right ) = e^{-\lambda_k(\beta_{a,b})(z-x_k-\delta/2) \log n} - e^{-\lambda_k(\beta_{a,b})(z-x_k+\delta/2) \log n}, 
\end{align*}
so 
\begin{align}
\liminf_{n \rightarrow \infty} \frac{1}{\log n} \log \mathbb{P}_{\beta_{a,b}} \left ( \frac{e_k}{\log n} \in (z-x_k-\delta/2,z-x_k+\delta/2) \right ) \geq -\lambda_k(\beta_{a,b})(z-x_k). \label{asymptexpo}
\end{align}

Proceeding as in \eqref{asymptfunctional02psik}, but for $\psi_{k+1}$ and $\mathbb{P}_{\beta_{a,b}}(\cdot | A^n_k)$ instead of $\psi_k$ and $\mathbb{P}_{\beta_{a,b}}(\cdot)$ (and, in particular, using Proposition \ref{asymptfunctionalonak} instead of Theorem \ref{asymptfunctional}) we get that for any $\theta \in \mathbb{R}$,  
\begin{align}
\frac{\log \E_{\beta_{a,b}} [ e^{\theta \int_0^{\tau_n} \psi_{k+1}(|\Pi^n_s|) \dd s}  | A^n_k]}{\log n} \underset{n \rightarrow \infty}{\longrightarrow} \zeta_{a,b}(\theta) + \infty \mathds{1}_{\theta\geq \lambda_{k+1}(\beta_{a,b})}. \label{asymptfunctional02psikcond}
\end{align}
We can thus apply again G\"artner-Ellis Theorem. The lower bound part (see \cite[Thm. 2.3.6(b)]{dembozeitouni}) yields that, for any open set $\tilde G \subset \mathbb{R}$, 
\begin{align*}
\liminf_{n \rightarrow \infty} \frac{1}{\log n} \log \mathbb{P}_{\beta_{a,b}} \left ( \frac{\int_0^{\tau_n} \psi_{k+1}(|\Pi^n_s|) \dd s}{\log n} \in \tilde G | A^n_k \right ) \geq -\inf_{x \in \tilde G \cap \mathcal{F}_{k+1}} \mathcal{I}^{k+1}_{a,b}(x). 
\end{align*}
Since $x_k \in (x_k-\delta/2,x_k+\delta/2) \subset (0,x_{k+1}) \subset \mathcal{F}_{k+1}$ we obtain 
\begin{align}
& \liminf_{n \rightarrow \infty} \frac{1}{\log n} \log \mathbb{P}_{\beta_{a,b}} \left ( \frac{\int_0^{\tau_n} \psi_{k+1}(|\Pi^n_s|) \dd s}{\log n} \in (x_k-\delta/2,x_k+\delta/2) | A^n_k \right ) \nonumber \\
& \qquad \qquad \qquad \geq -\inf_{x \in (x_k-\delta/2,x_k+\delta/2)} \mathcal{I}^{k+1}_{a,b}(x) \geq -\mathcal{I}^{k+1}_{a,b}(x_k) = -\mathcal{I}_{a,b}(x_k), \label{ldplowerboundlvlkcond}
\end{align}
where the last equality comes from the definition of $\mathcal{I}^{\cdot}_{a,b}(\cdot)$ in \eqref{defikab}. 

Finally, taking the logarithm in \eqref{manualldplowerbound}, dividing by $\log n$ and using the estimates from \eqref{asymptexpo}, \eqref{ldplowerboundlvlkcond}, and \eqref{asymptpak0} we obtain 
\begin{align*}
\liminf_{n \rightarrow \infty} \frac{1}{\log n} \log \mathbb{P}_{\beta_{a,b}} \left ( \frac{T_n^k}{\log n} \in G \right ) \geq -(\mathcal{I}_{a,b}(x_k) + \lambda_k(\beta_{a,b})(z-x_k)) = -\mathcal{I}^k_{a,b}(z), 
\end{align*}
where the last equality comes from the definition of $\mathcal{I}^k_{a,b}(\cdot)$ in \eqref{defikab}. Since $z$ has been chosen so that it satisfies \eqref{approxinf}, we deduce that \eqref{ldplowerboundlvlkepsilon} holds true, concluding the proof. 

\end{proof}

\section{Convergence rate for the absorption time} \label{proofboundkoldist}
\subsection{Connection with an integral functional: Proof of Proposition \ref{coaltimentonplus1}}

For $\Lambda\in\Ms_f([0,1])$ such that $\Lambda(\{0\})=0$, let us recall the natural construction, initially from \cite[Cor. 3]{pitman1999}, of the $\Lambda$-coalescent started with $n$ blocks. Let $N$ be a Poisson point process on $(0,\infty) \times (0,1) \times (\{0,1\}^{n})$ with intensity measure $m(ds,dr,dz):=ds \times (\mathcal{B}(r)^{\times n}(dz)) r^{-2} \Lambda(dr)$, where we write $\mathcal{B}(r)^{\times n}$ for the distribution of a sequence of $n$ iid Bernoulli random variables with parameter $r$. We build a process $(\Pi^{n}_t(N))_{t\geq 0}$ of random partitions of $\{1,\ldots,n\}$ where, at any time $t$, the blocks of the partitions are denoted by $A^1_t,A^2_t,\ldots$ and ordered by their smallest element. At any time $t$ the number of non-empty blocks $|\Pi^{n}_t(N)|$ is smaller or equal to $n$ (for convenience we set $A^i_t=\emptyset$ for all $i \in \{|\Pi^{n}_t(N)|+1,\ldots,n\}$). Let $Z^n_0:=\{ z=(z_i)_{1 \leq i \leq n} \in \{0,1\}^{n}; \exists i,j \ \text{s.t.} \ i \neq j \ \text{and} \ z_i=z_j=1\}$. Then the set of points of $N$ in $(0,\infty) \times (0,1) \times Z^n_0$ is discrete. For each such point $(s,r,z)$, we select all the blocks $A^i_{s-}$ such that $z_i=1$ and let them merge at time $s$, leaving other blocks untouched. No other transitions of the system are allowed. By \cite[Cor. 3]{pitman1999}, the process $(\Pi^{n}_t(N))_{t\geq 0}$ constructed above is a $\Lambda$-coalescent started with $n$ blocks. 

\begin{proof}[Proof of Proposition \ref{coaltimentonplus1}]
We fix $n \geq 1$. Let $N$ and $(\Pi^{n+1}_t(N))_{t\geq 0}$ be as in the discussion above, with the exception that, if one of the blocks at time $t$ is $\{n+1\}$, we shift the index of that block to $n+1$, so that $A^{n+1}_t=\{n+1\}$. Clearly, $(\Pi^{n+1}_t(N))_{t\geq 0}$ is a $\Lambda$-coalescent started with $n+1$ blocks and its restriction $(\Pi^{n}_t(N))_{t\geq 0}$ to $\{1,\ldots,n\}$ is a $\Lambda$-coalescent started with $n$ blocks. Then for any $t \geq 0$ we have 
\begin{align*}
\mathbb{P}_{\Lambda}(\tau_{n+1}>t) = \mathbb{P}(|\Pi^{n+1}_t(N)|>1) & = \mathbb{P}(|\Pi^{n}_t(N)|>1) + \mathbb{P}(|\Pi^{n}_t(N)|=1, |\Pi^{n+1}_t(N)|>1) \\
& = \mathbb{P}_{\Lambda}(\tau_n>t) + \mathbb{P}(|\Pi^{n}_t(N)|=1, |\Pi^{n+1}_t(N)|=2). 
\end{align*}
In particular, 
\begin{align}
\mathbb{P}_{\Lambda}(\tau_n\leq t)-\mathbb{P}_{\Lambda}(\tau_{n+1}\leq t) = \mathbb{P}(|\Pi^{n}_t(N)|=1, |\Pi^{n+1}_t(N)|=2). \label{decomptaunplus1}
\end{align}
For $i \in \{0,1\}$, let $S_i := (0,\infty) \times (0,1) \times (\{0,1\}^{n} \times \{i\})$ and let $N_i$ denote the restriction of $N$ to $S_i$, projected on $S:=(0,\infty) \times (0,1) \times (\{0,1\}^{n})$. Then $N_0$ and $N_1$ are independent Poisson random measures on $S$ with intensity respectively $m_0(ds,dr,dz):=ds \times (\mathcal{B}(r)^{\times n}(dz)) r^{-2} \Lambda'(dr)$ and $m_1(ds,dr,dz):=ds \times (\mathcal{B}(r)^{\times n}(dz)) r^{-1} \Lambda(dr)$. In particular, $(\Pi^{n}_t(N_0))_{t\geq 0}$ is a $\Lambda'$-coalescent started with $n$ blocks. Any atom of $N_0$ (resp. $N_1$) represents a potential merging event that does not (resp. does) involve the block with index $n+1$. The event $\{|\Pi^{n}_t(N)|=1, |\Pi^{n+1}_t(N)|=2\}$ signifies that the first $n$ blocks have merged within time $t$ while the block $\{n+1\}$ has not merged with any other non-empty block on $[0,t]$. Therefore, this event means that $N_1$ has contributed to no non-trivial merger on $[0,t]$, while $N_0$ has contributed to merging the first $n$ blocks. 
If there are currently $k$ blocks in $\Pi^{n}_t(N)$, then $N_1$ contributes to a non-trivial merger at rate $\phi^{\Lambda}(k)$, where $\phi^{\Lambda}(\cdot)$ is a in \eqref{defcoefphilamdak}. We thus get 
\begin{align}
\mathbb{P}(|\Pi^{n}_t(N)|=1, |\Pi^{n+1}_t(N)|=2) & = \E \left [ \mathds{1}_{|\Pi^{n}_t(N_0)|=1} e^{-\int_0^t \phi^{\Lambda}(|\Pi^n_s(N_0)|) \dd s} \right ] \nonumber \\
& = \E_{\Lambda'} \left [ \mathds{1}_{|\Pi^{n}_t|=1} e^{-\int_0^t \phi^{\Lambda}(|\Pi^n_s|) \dd s} \right ] \leq \E_{\Lambda'} \left [ e^{-\int_0^{\tau_n} \phi^{\Lambda}(|\Pi^n_s|) \dd s} \right ]. \label{lastestimntonplus1}
\end{align}
Then combining \eqref{defkolmogorovdist}, \eqref{decomptaunplus1} and \eqref{lastestimntonplus1} we obtain \eqref{coaltimentonplus1expr}. 
\end{proof}

\subsection{Estimate for the Kolmogorov distance: Proof of Theorem \ref{boundkolmdistmain}}

The next step in the proof of Theorem \ref{boundkolmdistmain} is to determine the asymptotic behaviors of $\phi^{\beta_{a,b}}(n)$ as $n \to \infty$. This is done in the following lemma. 
\begin{lemma} \label{equivphinab}
For any $a \in (0,1)$ and $b>0$ we have 
\begin{align}
\phi^{\beta_{a,b}}(n) & = \frac{\Gamma(a)}{1-a} n^{1-a} \left ( 1 - \mathds{1}_{a+b\neq 1} \frac{\Gamma(b)}{\Gamma(a+b-1)n^{1-a}} + \frac{(1-a)(a+2b-2)}{2n} + \underset{n \rightarrow \infty}{o} \left (\frac1{n}\right ) \right ). \label{equivphinab2}
\end{align}
\end{lemma}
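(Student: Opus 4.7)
The plan is to obtain a closed-form identity for $\phi^{\beta_{a,b}}(n)$ and then read off the asymptotic expansion from the expansion of a quotient of Gamma functions (Lemma \ref{equivquotientgamma}). The main steps are as follows.

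First I would use the elementary factorization $1-(1-r)^n=r\sum_{k=0}^{n-1}(1-r)^k$ in the definition \eqref{defcoefphilamdak}. Since $\beta_{a,b}(\dd r)=r^{a-1}(1-r)^{b-1}\dd r$, this converts the integral into a finite sum of Beta functions:
\begin{align*}
\phi^{\beta_{a,b}}(n)=\sum_{k=0}^{n-1}\int_0^1 r^{a-1}(1-r)^{k+b-1}\,\dd r=\Gamma(a)\sum_{k=0}^{n-1}\frac{\Gamma(k+b)}{\Gamma(a+k+b)}.
\end{align*}

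Second, the key observation is that the summand telescopes. Using $\Gamma(a+k+b)=(a+k+b-1)\Gamma(a+k+b-1)$ and $\Gamma(k+b+1)=(k+b)\Gamma(k+b)$, a direct computation gives
\begin{align*}
\frac{\Gamma(k+b)}{\Gamma(a+k+b-1)}-\frac{\Gamma(k+b+1)}{\Gamma(a+k+b)}=(a-1)\frac{\Gamma(k+b)}{\Gamma(a+k+b)}.
\end{align*}
Summing from $k=0$ to $n-1$ and plugging back yields the closed form
\begin{align*}
\phi^{\beta_{a,b}}(n)=\frac{\Gamma(a)}{1-a}\left(\frac{\Gamma(n+b)}{\Gamma(n+a+b-1)}-\frac{\Gamma(b)}{\Gamma(a+b-1)}\right),
\end{align*}
with the convention that $\Gamma(b)/\Gamma(a+b-1)=0$ when $a+b=1$ (which is forced by continuity since $\Gamma$ has a pole at $0$); this is precisely what the indicator $\mathds{1}_{a+b\neq 1}$ in \eqref{equivphinab2} records.

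Finally, I would apply Lemma \ref{equivquotientgamma} with $x=n$, $\alpha=b$, $\beta=a+b-1$ (so $\alpha-\beta=1-a$ and $\alpha+\beta-1=a+2b-2$) to get
\begin{align*}
\frac{\Gamma(n+b)}{\Gamma(n+a+b-1)}=n^{1-a}\left(1+\frac{(1-a)(a+2b-2)}{2n}+\underset{n\to\infty}{o}\!\left(\tfrac1n\right)\right),
\end{align*}
factor out $\Gamma(a)\,n^{1-a}/(1-a)$ from the closed form, and divide the constant term by $n^{1-a}$ to absorb it into the parenthesis. This produces exactly the expansion \eqref{equivphinab2}.

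There is no real obstacle: once the factorization $1-(1-r)^n=r\sum(1-r)^k$ is exploited, everything reduces to a telescoping sum and the standard Gamma-quotient expansion. The only mildly delicate point is the case $a+b=1$, which must be treated by continuity to justify dropping the constant term; this is why the statement carries the indicator $\mathds{1}_{a+b\neq 1}$.
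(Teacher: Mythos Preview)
Your proof is correct and follows essentially the same route as the paper: factor $1-(1-r)^n$, reduce $\phi^{\beta_{a,b}}(n)$ to a finite sum of Beta values, collapse the sum to the closed form $\frac{\Gamma(a)}{1-a}\bigl(\frac{\Gamma(n+b)}{\Gamma(n+a+b-1)}-\mathds{1}_{a+b\neq 1}\frac{\Gamma(b)}{\Gamma(a+b-1)}\bigr)$, and then apply Lemma~\ref{equivquotientgamma}. The only difference is that the paper invokes an external lemma for the summation, whereas you supply the telescoping identity directly; your version is slightly more self-contained but otherwise identical.
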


\begin{proof}
For any $a \in (0,1)$, $b>0$ and $n \geq 2$ we have 
\begin{align} \label{calphinab1}
\phi^{\beta_{a,b}}(n) & = \frac{\Gamma(a)}{1-a} \left ( \frac{\Gamma(n+b)}{\Gamma(n-1+a+b)} - \mathds{1}_{a+b\neq 1} \frac{\Gamma(b)}{\Gamma(a+b-1)} \right ). 
\end{align}
Indeed, we have $1-(1-r)^n = r ( \sum_{k=0}^{n-1} (1-r)^k )$ so, combining with \eqref{defcoefphilamdak}, we get 
\begin{align*}
\phi^{\beta_{a,b}}(n) = \sum_{j=0}^{n-1} \int_{(0,1)} (1-r)^j \beta_{a,b}(\dd r) = \sum_{j=1}^{n} B(a,b+j-1) = \Gamma(a) \sum_{j=1}^{n} \frac{\Gamma(b-1+j)}{\Gamma(a+b-1+j)}. 
\end{align*}
Then using \cite[Lem. A.1]{10.1214/24-EJP1198} we get \eqref{calphinab1}. Then \eqref{equivphinab2} follows from the combination of \eqref{calphinab1} with Lemma \ref{equivquotientgamma}. 
\end{proof}

We can now bring the pieces together to prove Theorem \ref{boundkolmdistmain}. 
\begin{proof}[Proof of Theorem \ref{boundkolmdistmain}]
Let $a,b$ be as in the statement of the theorem. Using Proposition \ref{coaltimentonplus1} and the fact that $\beta_{a,b}'=\beta_{a,b+1}$ we get that 
\begin{align}
d_K(\rho_n(\beta_{a,b}),\rho_{\infty}(\beta_{a,b})) \leq \sum_{k \geq n} d_K(\rho_k(\beta_{a,b}),\rho_{k+1}(\beta_{a,b})) \leq \sum_{k \geq n} \Es^{\beta_{a,b+1}}_{k}(-\phi^{\beta_{a,b}}). \label{boundkolmdisteq1}
\end{align}
Lemma \ref{equivphinab} shows that $\phi^{\beta_{a,b}}$ satisfies \eqref{condpsi0} with $c=\Gamma(a)/(1-a)$. 
By the combination of Lemmas \ref{equivphinab} and \ref{equivlambdanab} we get  
\begin{align}
\frac{\phi^{\beta_{a,b}}(n)}{\lambda_n(\beta_{a,b+1})} = \frac{2-a}{1-a} \frac1{n} \left ( 1 - \mathds{1}_{a+b\neq 1} \frac{\Gamma(b)}{\Gamma(a+b-1)n^{1-a}} + \underset{n \rightarrow \infty}{o} \left (\frac1{n^{1-a}}\right ) \right ). \label{boundkolmdisteq2}
\end{align}
Since $a \in (0,1)$, this shows in particular that $\phi^{\beta_{a,b}}(n)/\lambda_n(\beta_{a,b+1})$ satisfies \eqref{condpsiprecise0} with $d=(2-a)/(1-a)$. Since $a+b>1$, we have moreover that $d \in (0,(b+1)/(1-a))$ so, by Theorem \ref{asymptfunctionalbis}, there are $K_{+},K_{-}>0$ such that for all $k \geq 1$, 
\begin{align}
\frac{K_{-}}{k^{2-a}} \leq \Es^{\beta_{a,b+1}}_{k}(-\phi^{\beta_{a,b}}) \leq \frac{K_{+}}{k^{2-a}}. \label{boundkolmdisteq3}
\end{align}
The combination of \eqref{boundkolmdisteq1} and \eqref{boundkolmdisteq3} yields 
\begin{align*}
d_K(\rho_n(\beta_{a,b}),\rho_{\infty}(\beta_{a,b})) \leq K_{+} \sum_{k \geq n} \frac{1}{k^{2-a}} \underset{n \rightarrow \infty}{\sim} \frac{K_{+}}{(1-a) n^{1-a}}, 
\end{align*}
and this yields \eqref{boundkolmdisteq}. 
\end{proof}

\begin{proof}[Proof of Corollary \ref{recordproba}]
The identification $\mathbb{P}_{\Lambda}(\tau_n \neq \tau_{n+1})=\Es^{\Lambda'}_{n}(-\phi^{\Lambda})$ follows from the same argument proving Proposition \ref{coaltimentonplus1}. Then, the bound \eqref{recordproba0} follows from this identification together with \eqref{boundkolmdisteq3}. 
\end{proof}

\appendix
\section{ } \label{append}

\subsection{The coefficients $\lambda_n(\beta_{a,b})$} \label{coeflambdanab}
In this appendix we gather some expressions and asymptotics of the coefficients $\lambda_n(\beta_{a,b})$ (see \eqref{deflambdan}). For the sake of completeness, we provide all the details of the derivations of these formulas and asymptotics. 

\begin{lemma} \label{calclambdanab}
For any $a \in (0,\infty) \setminus \{1,2\}$, $b>0$ and $n \geq 2$ we have 
\begin{align}
\lambda_n(\beta_{a,b}) & = \frac{\Gamma(a)}{2-a} \frac{\Gamma(n+b)}{\Gamma(n-2+a+b)} - \frac{(b-1)\Gamma(a)}{1-a} \frac{\Gamma(n-1+b)}{\Gamma(n-2+a+b)} + \frac{\Gamma(a)\Gamma(b)}{\Gamma(a+b-2)} \frac1{(1-a)(2-a)}, \label{calclambdanab1}
\end{align}
For $a=1$, $b>0$ and $n \geq 2$ we have 
\begin{align}
\lambda_n(\beta_{a,b}) & = n-1 - (b-1) \sum_{j=1}^{n-1} \frac{1}{b-1+j}. \label{calclambdanab1caseaequal1}
\end{align}
For $a=2$, $b>0$ and $n \geq 2$ we have 
\begin{align}
\lambda_n(\beta_{a,b}) & = \sum_{j=1}^{n-1} \frac{1}{b+j} - 1 + \frac{1}{b} + \frac{b-1}{b+n-1}. \label{calclambdanab1caseaequal2}
\end{align}
\end{lemma}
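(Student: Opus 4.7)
The plan is to start from the definition of $\lambda_n(\beta_{a,b})$, reduce the integrand to a manageable form, integrate termwise against $r^{a-1}(1-r)^{b-1}\dd r$, and then evaluate the resulting finite sum in closed form via two successive summation-by-parts identities for ratios of Gamma functions. The identities for $\Gamma$ quotients degenerate at $a=1$ and $a=2$, which is the reason the statement separates these values.

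First I would establish the polynomial identity
\begin{align*}
1-(1-r)^n - nr(1-r)^{n-1} = r^2 \sum_{m=0}^{n-2}(m+1)(1-r)^m,
\end{align*}
which can be proved either by writing $1=(r+(1-r))^n$ and collecting the terms of order $\geq 2$ in $r$, or by iterating the geometric-type identity $\frac{1-(1-r)^n}{r}=\sum_{k=0}^{n-1}(1-r)^k$. Inserting this into \eqref{deflambdan} and integrating using the beta integral yields
\begin{align*}
\lambda_n(\beta_{a,b}) = \Gamma(a) \sum_{j=1}^{n-1} j \, \frac{\Gamma(b+j-1)}{\Gamma(a+b+j-1)}.
\end{align*}
This representation is well-defined for all $a,b>0$ and is the common starting point for the three cases.

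For $a\notin\{1,2\}$, I would evaluate the sum by two telescopings. A direct computation using $\Gamma(x+1)=x\Gamma(x)$ gives the identities
\begin{align*}
(1-a) \, \frac{\Gamma(b+j-1)}{\Gamma(a+b+j-1)} &= \frac{\Gamma(b+j)}{\Gamma(a+b+j-1)} - \frac{\Gamma(b+j-1)}{\Gamma(a+b+j-2)},\\
(2-a) \, \frac{\Gamma(b+j)}{\Gamma(a+b+j-1)} &= \frac{\Gamma(b+j+1)}{\Gamma(a+b+j-1)} - \frac{\Gamma(b+j)}{\Gamma(a+b+j-2)}.
\end{align*}
Applying summation by parts to the first one turns $\sum j\,\frac{\Gamma(b+j-1)}{\Gamma(a+b+j-1)}$ into $(n-1)\frac{\Gamma(b+n-1)}{\Gamma(a+b+n-2)}$ minus a sum of $\frac{\Gamma(b+j)}{\Gamma(a+b+j-1)}$, and telescoping with the second identity evaluates that second sum explicitly. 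Substituting back and combining the two $\Gamma(b+n-1)/\Gamma(a+b+n-2)$ contributions with the help of $\Gamma(a+b+n-2)=(a+b+n-3)\Gamma(a+b+n-3)$ and $\Gamma(n+b)=(n+b-1)\Gamma(n-1+b)$ yields \eqref{calclambdanab1}.

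The cases $a=1$ and $a=2$ are residual: in each one exactly one of the two telescoping identities collapses to $0=0$. For $a=1$ the ratio $\Gamma(b+j-1)/\Gamma(b+j)$ reduces to $1/(b+j-1)$, and writing $j=(b+j-1)-(b-1)$ separates the sum into $n-1$ and a harmonic-type tail, producing \eqref{calclambdanab1caseaequal1}. For $a=2$ the first telescoping is still valid (with $1-a=-1$) while the second is replaced by the trivial evaluation $\Gamma(b+j)/\Gamma(b+j+1)=1/(b+j)$; the algebraic manipulation $-n/(b+n-1) = -1 + (b-1)/(b+n-1)$ then matches \eqref{calclambdanab1caseaequal2}. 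The main (and only) obstacle in the whole argument is the bookkeeping in the simplification after the two telescopings for generic $a$; every other step is a routine manipulation of Gamma-function ratios.
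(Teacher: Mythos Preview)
Your proposal is correct and essentially follows the same route as the paper: both start from the polynomial identity $1-(1-r)^n-nr(1-r)^{n-1}=r^2\sum_{m=0}^{n-2}(m+1)(1-r)^m$, integrate termwise to obtain $\lambda_n(\beta_{a,b})=\Gamma(a)\sum_{j=1}^{n-1} j\,\Gamma(b+j-1)/\Gamma(a+b+j-1)$, and then evaluate the sum via the telescoping identities for Gamma ratios, with the degenerate cases $a=1,2$ handled separately. The only organizational difference is that the paper eliminates the factor $j$ by the algebraic split $j=(b+j-1)-(b-1)$ (turning the weighted sum into two unweighted sums that are then each telescoped via \cite[Lem.~A.1]{10.1214/24-EJP1198}), whereas you use Abel summation with the first telescoping identity and then telescope the remainder with the second; these are equivalent rearrangements of the same computation.
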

The Gamma function has a poles at $0$ and $-1$ but $1/\Gamma(0)$ and $1/\Gamma(-1)$ can be defined by continuity by setting them to be zero. In particular, the expression in \eqref{calclambdanab1} is indeed well-defined when $a+b \in \{1,2\}$. 
\begin{proof}
We have 
\begin{align*}
1-(1-r)^n-nr(1-r)^{n-1} & = r \left ( \sum_{k=0}^{n-1} (1-r)^k \right ) - nr(1-r)^{n-1} = r \sum_{k=0}^{n-2} \left ( (1-r)^k - (1-r)^{n-1} \right ) \\
& = r^2 \sum_{k=0}^{n-2} \sum_{j=k}^{n-2} (1-r)^j = r^2 \sum_{j=0}^{n-2} (j+1) (1-r)^j. 
\end{align*}
Combining with \eqref{deflambdan} we get, for any $a,b>0$ and $n \geq 2$, 
\begin{align*}
\lambda_n(\beta_{a,b}) = \sum_{j=0}^{n-2} (j+1) \int_{(0,1)} (1-r)^j \beta_{a,b}(\dd r) = \sum_{j=1}^{n-1} j B(a,b+j-1) = \Gamma(a) \sum_{j=1}^{n-1} j \frac{\Gamma(b-1+j)}{\Gamma(a+b-1+j)}, 
\end{align*}
We first consider the case where $a \in (0,\infty) \setminus \{1,2\}$ and $b>0$. From the above we get 
\begin{align}
\lambda_n(\beta_{a,b}) & = \Gamma(a) \left ( \sum_{j=1}^{n-1} (b-1+j) \frac{\Gamma(b-1+j)}{\Gamma(a+b-1+j)} - (b-1) \sum_{j=1}^{n-1} \frac{\Gamma(b-1+j)}{\Gamma(a+b-1+j)} \right ) \nonumber \\
& = \Gamma(a) \left ( \sum_{j=1}^{n-1} \frac{\Gamma(b+j)}{\Gamma(a+b-1+j)} - (b-1) \sum_{j=1}^{n-1} \frac{\Gamma(b-1+j)}{\Gamma(a+b-1+j)} \right ) \label{pointer} \\
& = \frac{\Gamma(a)}{2-a} \left ( \frac{\Gamma(b+n)}{\Gamma(a+b+n-2)} - \mathds{1}_{a+b\neq 1} \frac{\Gamma(b+1)}{\Gamma(a+b-1)} \right ) \nonumber \\
& - \frac{(b-1)\Gamma(a)}{1-a} \left ( \frac{\Gamma(b+n-1)}{\Gamma(a+b+n-2)} - \mathds{1}_{a+b\neq 1} \frac{\Gamma(b)}{\Gamma(a+b-1)} \right ), \nonumber
\end{align}
where we have used $(b-1+j)\Gamma(b-1+j)=\Gamma(b+j)$ for the second equality and \cite[Lem. A.1]{10.1214/24-EJP1198} for the third equality. 
The terms depending on $n$ are identical to the ones announced in \eqref{calclambdanab1}, it thus remains to calculate the constant term. The latter equals 
\begin{align*}
& \mathds{1}_{a+b\neq 1} \frac{\Gamma(a)}{\Gamma(a+b-1)} \left ( - \frac{\Gamma(b+1)}{2-a} + \frac{(b-1)\Gamma(b)}{1-a} \right ) \\
= & \mathds{1}_{a+b\neq 1} \frac{\Gamma(a)\Gamma(b)}{\Gamma(a+b-1)} \left ( - \frac{b}{2-a} + \frac{b-1}{1-a} \right ) \\
= & \mathds{1}_{a+b\neq 1} \frac{\Gamma(a)\Gamma(b)}{\Gamma(a+b-1)} \frac{a+b-2}{(1-a)(2-a)} = \mathds{1}_{a+b \notin \{1,2\}} \frac{\Gamma(a)\Gamma(b)}{\Gamma(a+b-2)} \frac1{(1-a)(2-a)}. 
\end{align*}
This yields \eqref{calclambdanab1}. 

If $a=1$ and $b>0$, note that \eqref{pointer} still holds true. Setting $a=1$ in that expression and using $(b-1+j)\Gamma(b-1+j)=\Gamma(b+j)$ we get \eqref{calclambdanab1caseaequal1}. Similarly, if $a=2$ and $b>0$, setting $a=2$ in \eqref{pointer} and using the same property of the Gamma function we get 
\begin{align*}
\lambda_n(\beta_{a,b}) & = \sum_{j=1}^{n-1} \frac{1}{b+j} - (b-1) \sum_{j=1}^{n-1} \frac{1}{(b-1+j)(b+j)}. 
\end{align*}
Simplifying the second sum yields \eqref{calclambdanab1caseaequal2}. 

\end{proof}

\begin{lemma} \label{equivlambdanab}
For any $a \in (0,2)\setminus\{1\}$, $b>0$ and $n \geq 2$ we have 
\begin{align} \label{calclambdanab2}
\lambda_n(\beta_{a,b}) = \frac{\Gamma(a)}{2-a} n^{2-a} \left ( 1 + \frac{2-a}{2n}\left (a+2b-3-\frac{2(b-1)}{1-a}\right ) + \frac{J(a,b)}{n^{2-a}} + \underset{n \rightarrow \infty}{o}\left (\frac1{n^{1\vee(2-a)}}\right ) \right ), 
\end{align}
where $J(a,b) := \Gamma(b)/\Gamma(a+b-2)(1-a)$ (we note that $1/\Gamma(a+b-2)$ is understood as $0$ when $a+b \in \{1,2\}$). In particular we have that $\lambda_n(\beta_{a,b}) \sim \Gamma(a)(2-a)^{-1} n^{2-a}$ as $n \rightarrow \infty$. 
\end{lemma}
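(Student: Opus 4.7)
The plan is to combine the exact closed-form expression from Lemma \ref{calclambdanab} with the standard asymptotic expansion of a quotient of two Gamma functions (namely Lemma \ref{equivquotientgamma} of the appendix), which has the shape
\[
\frac{\Gamma(n+x)}{\Gamma(n+y)} = n^{x-y}\left(1+\frac{(x-y)(x+y-1)}{2n}+O\!\left(\tfrac{1}{n^{2}}\right)\right).
\]
I would apply this expansion with $(x,y)=(b,a+b-2)$ to the first Gamma-ratio appearing in \eqref{calclambdanab1} (producing leading order $n^{2-a}$) and with $(x,y)=(b-1,a+b-2)$ to the second (producing leading order $n^{1-a}$).

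Next I would factor $\Gamma(a)n^{2-a}/(2-a)$ out of the whole right-hand side of \eqref{calclambdanab1}. This rescaling converts the leading term of the second Gamma-ratio into a $1/n$ contribution (since $n^{1-a}/n^{2-a}=n^{-1}$) and converts the purely constant third summand of \eqref{calclambdanab1} into an $n^{-(2-a)}$ contribution. Adding the $1/n$ correction coming from the first expansion to the rescaled leading term of the second ratio, a one-line algebraic simplification produces exactly $\frac{2-a}{2n}\!\left(a+2b-3-\frac{2(b-1)}{1-a}\right)$, while the rescaled constant term matches $J(a,b)/n^{2-a}$ on the nose. The borderline cases $a+b\in\{1,2\}$ require no separate treatment: the convention $1/\Gamma(0)=1/\Gamma(-1)=0$ makes the third summand of \eqref{calclambdanab1} and the factor $\Gamma(b)/\Gamma(a+b-2)$ in $J(a,b)$ vanish consistently on both sides.

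For the remainder, the $O(1/n^{2})$ term of the first Gamma expansion and the $O(1/n)$ sub-leading correction of the second (carrying its extra $n^{1-a}$ prefactor) each contribute to $\lambda_n(\beta_{a,b})$ at absolute order $n^{-a}$, i.e.\ at relative order $1/n^{2}$ once the leading $n^{2-a}$ has been factored out. Since $1/n^{2}=o(1/n^{2-a})$ when $a\in(0,1)$ and $1/n^{2}=o(1/n)$ when $a\in(1,2)$, this matches the announced error $o(n^{-(1\vee(2-a))})$ uniformly in the two sub-cases. There is no real obstacle in this argument; the only mild care required is the bookkeeping through the two asymptotic regimes $a<1$ and $a>1$, and the verification that the two apparently disparate sub-leading terms $n^{-1}$ and $n^{-(2-a)}$ are both genuinely present and correctly identified as coming from, respectively, the Gamma-ratio expansions and the constant third summand of Lemma \ref{calclambdanab}.
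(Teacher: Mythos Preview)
Your proposal is correct and follows exactly the same approach as the paper, whose proof is the single sentence ``This follows from the combination of Lemmas \ref{calclambdanab} and \ref{equivquotientgamma}.'' Your write-up simply spells out the bookkeeping that the paper leaves implicit.
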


\begin{proof}
This follows from the combination of Lemmas \ref{calclambdanab} and \ref{equivquotientgamma}. 
\end{proof}

\begin{lemma} \label{equlambdnabspecialvalues}
\begin{itemize}
\item For $a=1$ and $b>0$ we have 
\begin{align}
\lambda_n(\beta_{a,b}) = n \left ( 1 -(b-1) \frac{\log n}{n} +  \underset{n \rightarrow \infty}{o} \left (\frac{\log n}{n}\right ) \right ). \label{equivlambdanabaequal11}
\end{align}
In particular we have that $\lambda_n(\beta_{a,b}) \sim n$ as $n \rightarrow \infty$. 
\item For $a=2$ and $b>0$ we have 
\begin{align}
\lambda_n(\beta_{a,b}) = \log n \left ( 1 + \frac{H(b)}{\log n} +  \underset{n \rightarrow \infty}{o} \left (\frac{1}{\log n}\right ) \right ), \label{equivlambdanabaequal12}
\end{align}
where $H(b)=-1-\Gamma'(b)/\Gamma(b)$. In particular we have that $\lambda_n(\beta_{a,b}) \sim \log n$ as $n \rightarrow \infty$. 
\item For $a>2$ and $b>0$ we have 
\begin{align}
\lambda_n(\beta_{a,b}) \underset{n \rightarrow \infty}{\longrightarrow} \frac{\Gamma(a)\Gamma(b)}{\Gamma(a+b-2)} \frac1{(a-1)(a-2)}. \label{equivlambdanaba>2}
\end{align}
Moreover, for any $n \geq 2$, $\lambda_n(\beta_{a,b})$ is strictly smaller than its limit. 
\end{itemize}
\end{lemma}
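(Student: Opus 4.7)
The plan is to treat each of the three cases separately, using the explicit formulas from Lemma \ref{calclambdanab} (namely \eqref{calclambdanab1caseaequal1}, \eqref{calclambdanab1caseaequal2} and \eqref{calclambdanab1} for $a=1$, $a=2$, and $a>2$ respectively) combined with standard asymptotics of partial sums of harmonic-like series and of ratios of Gamma functions (Lemma \ref{equivquotientgamma}). None of the three cases requires any genuinely new idea beyond what was already used in Lemma \ref{equivlambdanab}.

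For $a=1$, I would start from \eqref{calclambdanab1caseaequal1} and expand the harmonic-type sum by writing $\sum_{j=1}^{n-1} 1/(b-1+j) = \Psi(b+n-1)-\Psi(b)$, where $\Psi := \Gamma'/\Gamma$ denotes the digamma function. Since $\Psi(b+n-1) = \log n + O(1/n)$ as $n \to \infty$, this gives $\sum_{j=1}^{n-1} 1/(b-1+j) = \log n + O(1)$. Plugging into \eqref{calclambdanab1caseaequal1} yields $\lambda_n(\beta_{1,b}) = n - (b-1)\log n + O(1)$, which rearranges into \eqref{equivlambdanabaequal11}.

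For $a=2$, I would apply the same tool to \eqref{calclambdanab1caseaequal2}. Writing $\sum_{j=1}^{n-1} 1/(b+j) = \Psi(b+n)-\Psi(b+1)$ and using the refined asymptotic $\Psi(b+n) = \log n + o(1)$ together with the recursion $\Psi(b+1) = 1/b + \Psi(b) = 1/b + \Gamma'(b)/\Gamma(b)$, the sum equals $\log n - 1/b - \Gamma'(b)/\Gamma(b) + o(1)$. Plugging into \eqref{calclambdanab1caseaequal2}, the $1/b$ and $-1$ terms combine with the constant $-1 + 1/b$ and the vanishing term $(b-1)/(b+n-1) = o(1)$ to give $\lambda_n(\beta_{2,b}) = \log n - 1 - \Gamma'(b)/\Gamma(b) + o(1) = \log n + H(b) + o(1)$, which is \eqref{equivlambdanabaequal12}.

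For $a>2$, I would apply Lemma \ref{equivquotientgamma} to the two $n$-dependent terms in \eqref{calclambdanab1}: both $\Gamma(n+b)/\Gamma(n-2+a+b)$ and $\Gamma(n-1+b)/\Gamma(n-2+a+b)$ are of order $n^{2-a}$ and thus vanish as $n \to \infty$ since $a>2$. What remains is precisely the constant $\Gamma(a)\Gamma(b)/(\Gamma(a+b-2)(1-a)(2-a))$, which equals $\Gamma(a)\Gamma(b)/(\Gamma(a+b-2)(a-1)(a-2))$ and yields \eqref{equivlambdanaba>2}. For the strict upper bound $\lambda_n(\beta_{a,b}) < \lim_{m \to \infty}\lambda_m(\beta_{a,b})$, I would argue from \eqref{deflambdan} that $\lambda_n$ is strictly increasing in $n$: indeed, a direct computation shows $(1-(1-r)^{n+1}-(n+1)r(1-r)^n) - (1-(1-r)^n - nr(1-r)^{n-1}) = nr^2(1-r)^{n-1} > 0$ on $(0,1)$, so the integrand in \eqref{deflambdan} increases strictly in $n$. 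Hence $(\lambda_n(\beta_{a,b}))_{n\ge 2}$ is strictly increasing and bounded, converging to its supremum, which is strictly greater than each $\lambda_n(\beta_{a,b})$.

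I do not anticipate a serious obstacle in any of the three cases; the most delicate point is merely ensuring the constant term in case $a=2$ is correctly computed (with the digamma recursion producing the exact form $H(b)=-1-\Gamma'(b)/\Gamma(b)$), but this is a routine manipulation.
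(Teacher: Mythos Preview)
Your proposal is correct and follows essentially the same approach as the paper: in each case you start from the explicit formula of Lemma \ref{calclambdanab} and apply standard asymptotics (digamma/harmonic sums for $a=1,2$, Lemma \ref{equivquotientgamma} for $a>2$), with the monotonicity for $a>2$ read off from \eqref{deflambdan}. Your handling of the $a=2$ case via the telescoping identity $\sum_{j=1}^{n-1}1/(b+j)=\Psi(b+n)-\Psi(b+1)$ is in fact slightly more direct than the paper's detour through Euler's constant and the series representation of the digamma function, but the two arguments are equivalent.
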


\begin{proof}
The estimate \eqref{equivlambdanabaequal11} follows directly from Lemma \ref{calclambdanab}. Similarly, the estimate \eqref{equivlambdanaba>2} follows directly from Lemmas \ref{calclambdanab} and \ref{equivquotientgamma} (the claim that $\lambda_n(\beta_{a,b})$ is strictly smaller than its limit comes from $\lambda_n(\beta_{a,b})$ being increasing in $n$ by \eqref{deflambdan}). For \eqref{equivlambdanabaequal12}, using \eqref{calclambdanab1caseaequal2} and the fact that $-\log n + \sum_{j=1}^{n-1} \frac{1}{j}=\gamma + o(1)$ as $n \to \infty$ (where $\gamma$ is Euler's constant) we get 
\begin{align*}
\lambda_n(\beta_{a,b}) & = - 1 + \frac{1}{b} + \log n + \gamma - \sum_{j=1}^{n-1} \left ( \frac{1}{j} - \frac{1}{b+j} \right ) + \underset{n \rightarrow \infty}{o} (1) \\
& = - 1 + \frac{1}{b} + \log n + \gamma - \sum_{j=1}^{\infty} \frac{b}{j(b+j)} + \underset{n \rightarrow \infty}{o} (1) \\
& = - 1 + \frac{1}{b} + \log n - \frac{\Gamma'(b+1)}{\Gamma(b+1)} + \underset{n \rightarrow \infty}{o} (1) = - 1 + \log n - \frac{\Gamma'(b)}{\Gamma(b)} + \underset{n \rightarrow \infty}{o} (1), 
\end{align*}
where we have used the series representation of the Digamma function $\psi(\cdot):=\Gamma'(\cdot)/\Gamma(\cdot)$ at $b+1$ and the recurrence property $\psi(z+1)=\psi(z)+1/z$ of the Digamma function. This yields \eqref{equivlambdanabaequal12}. 
\end{proof}

\subsection{Quotients of Gamma functions, Digamma function} \label{fctzetaab}
In this appendix we gather some useful properties of the quotients of two Gamma functions and of the Digamma function. 

The following lemma is a useful estimate for the quotient of two Gamma functions, which can be found in, for example \cite{pjm/1102613160}. 
\begin{lemma} \label{equivquotientgamma}
For any $\alpha, \beta \in \R$ we have 
\begin{align*}
\frac{\Gamma(z+\alpha)}{\Gamma(z+\beta)} = z^{\alpha-\beta} \left ( 1 + \frac{(\alpha-\beta)(\alpha+\beta-1)}{2z} + \frac{C(\alpha,\beta)}{z^2} + \underset{z \rightarrow \infty}{o} \left (\frac1{z^2}\right ) \right ), 
\end{align*}
where $C(\alpha,\beta):=\frac1{12} \binom{\alpha-\beta}{2}(3(\alpha-\beta-1)^2-\alpha+\beta-1)$. 
\end{lemma}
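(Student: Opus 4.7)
The plan is to derive the expansion from Stirling's asymptotic formula; indeed, this is a classical identity and one could simply invoke \cite{pjm/1102613160}, but a self-contained derivation is routine. The starting point is the Stirling expansion
\begin{equation*}
\log\Gamma(z+c) = (z+c-\tfrac12)\log(z+c) - (z+c) + \tfrac12\log(2\pi) + \frac{1}{12(z+c)} + O(z^{-3}),
\end{equation*}
applied with $c=\alpha$ and $c=\beta$. Substituting $\log(z+c) = \log z + \log(1+c/z)$, expanding $\log(1+c/z) = c/z - c^2/(2z^2) + c^3/(3z^3) + O(z^{-4})$, and expanding $1/(z+c)$ in powers of $1/z$, one rewrites
\begin{equation*}
\log\Gamma(z+c) = (z+c-\tfrac12)\log z - z + \tfrac12\log(2\pi) + \frac{a_1(c)}{z} + \frac{a_2(c)}{z^2} + O(z^{-3})
\end{equation*}
for explicit polynomials $a_1(c), a_2(c)$ in $c$.

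Subtracting the versions for $c=\alpha$ and $c=\beta$ makes the unbounded terms collapse to $(\alpha-\beta)\log z$, leaving
\begin{equation*}
\log\frac{\Gamma(z+\alpha)}{\Gamma(z+\beta)} - (\alpha-\beta)\log z = \frac{a_1(\alpha)-a_1(\beta)}{z} + \frac{a_2(\alpha)-a_2(\beta)}{z^2} + O(z^{-3}).
\end{equation*}
Exponentiating and using $e^x = 1 + x + x^2/2 + O(x^3)$ yields the desired expansion, with $1/z$-coefficient $a_1(\alpha)-a_1(\beta)$, which simplifies to $(\alpha-\beta)(\alpha+\beta-1)/2$, and $1/z^2$-coefficient
\begin{equation*}
a_2(\alpha)-a_2(\beta) + \tfrac12\bigl(a_1(\alpha)-a_1(\beta)\bigr)^2.
\end{equation*}
Matching this with the claimed formula $C(\alpha,\beta) = \tfrac{1}{12}\binom{\alpha-\beta}{2}\bigl(3(\alpha-\beta-1)^2 - (\alpha-\beta) - 1\bigr)$ completes the proof.

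The main obstacle is not conceptual but purely algebraic: the $1/z^2$ coefficient collects three separate contributions (the cubic term of $\log(1+c/z)$ weighted by $z+c-1/2$, the Stirling correction $1/(12(z+c))$, and the square of the $1/z$ contribution appearing via the exponential), and one must carefully verify that these combine into the compact binomial-coefficient form of $C(\alpha,\beta)$. Since this expansion is standard and widely used, in the final paper it is cleanest to simply cite \cite{pjm/1102613160} and omit the routine bookkeeping.
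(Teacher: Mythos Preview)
Your proposal is correct and in fact goes beyond what the paper does: the paper provides no proof of this lemma at all, merely stating that the expansion ``can be found in, for example \cite{pjm/1102613160}.'' Your recommendation at the end to simply cite that reference is precisely the approach the paper takes, and your additional Stirling-based sketch, while left incomplete at the $1/z^2$ bookkeeping stage, is a valid route to the result should a self-contained derivation be wanted.
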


\begin{lemma} \label{fctzetaablem}
Let $\alpha,\beta>0$ be such that $0<\alpha-\beta<1$ (resp. $\alpha-\beta<0$), then the function $x \mapsto \Gamma(x+\alpha)/\Gamma(x+\beta)$ is $\mathcal{C}^1$ and increasing (resp. decreasing) on $(-\alpha,\infty)$. It has limit $-\infty$ (resp. $\infty$) as $x \to -\alpha$ and $\infty$ (resp. $0$) as $x \to \infty$. It's inverse function is well-defined from $\mathbb{R}$ (resp. $(0,\infty)$) to $(-\alpha,\infty)$ and it is $\mathcal{C}^1$ and increasing (resp. decreasing). 
\end{lemma}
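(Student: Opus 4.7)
The plan is to treat the two cases separately, using as main tool the logarithmic derivative identity $h'(x)/h(x) = \psi(x+\alpha) - \psi(x+\beta)$, where $h(x) := \Gamma(x+\alpha)/\Gamma(x+\beta)$ and $\psi := \Gamma'/\Gamma$ is the Digamma function. I would rely on the classical fact that $\psi$ is strictly increasing on $(0,\infty)$ (from the series $\psi'(z) = \sum_{k \geq 0}(k+z)^{-2}$) and on Lemma \ref{equivquotientgamma} for the behavior at $+\infty$.

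For the case $\alpha - \beta < 0$ the situation is straightforward: since $\beta > \alpha$, both arguments $x+\alpha$ and $x+\beta$ are positive on $(-\alpha,\infty)$, so $h$ is $\mathcal{C}^\infty$ and positive, and $x+\alpha < x+\beta$ yields $h' < 0$ through the log-derivative identity and the monotonicity of $\psi$. The endpoint limits come out cleanly: $h(-\alpha^+) = +\infty$ (the pole of $\Gamma(x+\alpha)$ divided by $\Gamma(\beta-\alpha) > 0$) and $h(\infty) = 0$ from $h(x) \sim x^{\alpha-\beta}$ via Lemma \ref{equivquotientgamma}.

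For the case $0 < \alpha - \beta < 1$ the obstacle is the sign change of $h$ at $x = -\beta$, where $\Gamma(x+\beta)$ has a pole and $h$ passes from $-\infty$ (near $-\alpha^+$, where $\Gamma(\beta-\alpha) < 0$ because $\beta - \alpha \in (-1,0)$) up through $0$ to $+\infty$. My plan is to bypass this by the decomposition $h(x) = (x+\beta)\tilde h(x)$ with $\tilde h(x) := \Gamma(x+\alpha)/\Gamma(x+\beta+1)$, in which both Gamma arguments are positive throughout $(-\alpha,\infty)$ (since $\beta + 1 - \alpha > 0$). This will give $\mathcal{C}^\infty$ regularity of $h$ automatically, and the log-derivative identity applied to $\tilde h$ gives $\tilde h' < 0$. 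On $(-\alpha, -\beta]$ I would then conclude $h' = \tilde h + (x+\beta)\tilde h' > 0$, since the first term is strictly positive and the second has two factors of sign $\leq 0$. On $[-\beta,\infty)$ I would return to the direct identity $h'/h = \psi(x+\alpha) - \psi(x+\beta) > 0$ (both arguments in $(0,\infty)$, $\psi$ increasing) together with $h \geq 0$. At the joining point $x = -\beta$ consistency is ensured by $h'(-\beta) = \tilde h(-\beta) = \Gamma(\alpha-\beta) > 0$.

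In both cases $h$ will thus be $\mathcal{C}^1$ and a strictly monotone bijection from $(-\alpha,\infty)$ onto its image ($\mathbb{R}$ in Case 1, $(0,\infty)$ in Case 2) with nowhere-vanishing derivative, so the inverse function theorem delivers the claimed properties of the inverse. The main delicate point, as indicated, is the sign change in Case 1; the rescaling $h = (x+\beta)\tilde h$ is the device that reduces the monotonicity question back to ratios of Gamma functions with positive arguments, where the Digamma monotonicity applies cleanly.
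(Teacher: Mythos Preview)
Your proof is correct and, at the high level, follows the same line as the paper's: both rely on the logarithmic derivative identity $h'/h=\psi(x+\alpha)-\psi(x+\beta)$ and the strict monotonicity of the Digamma function on $(0,\infty)$, and both handle the limits via the pole of $\Gamma$ at $0$ and Lemma~\ref{equivquotientgamma}. The only genuine difference is in the case $0<\alpha-\beta<1$ on the subinterval $(-\alpha,-\beta]$. The paper stays with $f'=f(\psi(x+\alpha)-\psi(x+\beta))$ and uses the Digamma recurrence $\psi(x+\beta)=\psi(x+\beta+1)-1/(x+\beta)$ to see that $\psi(x+\beta)>\psi(x+\alpha)$ there, which combined with $f<0$ gives $f'>0$; the point $x=-\beta$ is then treated by a separate limit computation to obtain $f'(-\beta)=\Gamma(\alpha-\beta)$ and $\mathcal{C}^1$ regularity. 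You instead apply the same recurrence on the Gamma side, factoring $h(x)=(x+\beta)\,\tilde h(x)$ with $\tilde h=\Gamma(x+\alpha)/\Gamma(x+\beta+1)$ smooth and strictly positive on all of $(-\alpha,\infty)$. This buys you $\mathcal{C}^\infty$ regularity across $x=-\beta$ for free and reduces the monotonicity on $(-\alpha,-\beta]$ to the sign analysis $h'=\tilde h+(x+\beta)\tilde h'>0$. The two arguments are equivalent in content; yours is slightly cleaner for the regularity claim.
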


\begin{proof}
Let us denote by $f$ the function in the statement of the lemma. Smoothness of $f$ on $(-\alpha,\infty)\setminus\{-\beta\}$ comes from smoothness of the Gamma function and, for $x \in (-\alpha,\infty)\setminus\{-\beta\}$, an easy calculation yields 
\begin{align}
f'(x)=f(x)(\psi(x+\alpha)-\psi(x+\beta)), \label{exprfprime}
\end{align}
where $\psi(\cdot):=\Gamma'(\cdot)/\Gamma(\cdot)$ is the Digamma function. When $0<\alpha-\beta<1$, the value $f(-\beta)$ is defined by continuity and equals $0$ and, using \eqref{exprfprime} and that, as $z \to 0$, we have the asymptotics $\Gamma(z)\sim1/z$ and $\psi(z)\sim-1/z$, we obtain that $f'(x) \to \Gamma(\alpha-\beta)>0$ as $x \to -\beta$. By classical theorems we deduce that $f$ is $\mathcal{C}^1$ on $(-\alpha,\infty)$ even when $0<\alpha-\beta<1$. When $0<\alpha-\beta<1$ (resp. $\alpha-\beta<0$), for $x \in (-\beta,\infty)$ (resp. $x \in (-\alpha,\infty)$) we have $f(x)>0$ and, since the Digamma function is increasing on $(0,\infty)$, we deduce from \eqref{exprfprime} that $f'(x)>0$ (resp. $f'(x)<0$). When $0<\alpha-\beta<1$, for $x \in (-\alpha,-\beta)$, using the recurrence property of the Gamma function we get 
\begin{align}
\psi(x+\beta) = \psi(x+\beta+1) - \frac1{x+\beta} > \psi(x+\alpha), \label{ineqdigamma}
\end{align}
where, for the last inequality, we have used that $x+\beta+1>x+\alpha>0$, that the Digamma function is increasing on $(0,\infty)$, and that $x+\beta<0$. Combining \eqref{ineqdigamma} with the fact that $f(x)<0$ when $x \in (-\alpha,-\beta)$ and \eqref{exprfprime} we obtain that $f'(x)>0$ for $x \in (-\alpha,-\beta)$. Altogether, we obtain the claim about monotonicity on $(-\alpha,\infty)$. The limit at $-\alpha$ comes from the fact that $\Gamma(x+\alpha) \to \infty$ and $\Gamma(x+\beta) \to \Gamma(-\alpha+\beta)$ as $x \to -\alpha$. The limit at infinity comes from Lemma \ref{equivquotientgamma}. Since $f$ has non-vanishing derivative on $(-\alpha,\infty)$, its inverse is also $\mathcal{C}^1$ with non-vanishing derivative. 
\end{proof}

\begin{lemma} \label{convexity}
Let $\alpha,\beta>0$ be such that $0<\alpha-\beta<1$ (resp. $\alpha-\beta<0$), then the function $x \mapsto \Gamma(x+\alpha)/\Gamma(x+\beta)$ is concave (resp. convex) on $(-\alpha,\infty)$. Equivalently, it's inverse function is convex (resp. concave). 
\end{lemma}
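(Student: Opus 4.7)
The plan is to compute $F''$ where $F(x) := \Gamma(x+\alpha)/\Gamma(x+\beta)$ and analyze its sign on each region of constant sign of $F$. From the identity $F'(x) = F(x)(\psi(x+\alpha) - \psi(x+\beta))$ established in the proof of Lemma~\ref{fctzetaablem} (with $\psi := \Gamma'/\Gamma$), differentiation yields
\begin{equation*}
F''(x) = F(x)\bigl[(\psi(x+\alpha) - \psi(x+\beta))^2 + \psi'(x+\alpha) - \psi'(x+\beta)\bigr].
\end{equation*}
The case $\alpha - \beta < 0$ is immediate: $F > 0$ throughout $(-\alpha, \infty)$, and strict monotonicity of $\psi'$ on $(0, \infty)$ makes the bracket strictly positive, so $F'' > 0$ and $F$ is convex. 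The rest of the proof treats the harder case $d := \alpha - \beta \in (0, 1)$, where $F$ changes sign at $x = -\beta$.

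On the region $x > -\beta$, set $y := x + \beta > 0$; the desired $F'' \leq 0$ is equivalent to $(\psi(y+d) - \psi(y))^2 \leq \psi'(y) - \psi'(y+d)$. This is the main obstacle, and I would prove it via a probabilistic reformulation: for $U \sim \mathrm{Beta}(y, d)$ and $V := -\log U \geq 0$, the moment generating function $\E[U^t] = B(y+t, d)/B(y, d)$ yields $\E V = \psi(y+d) - \psi(y)$ and $\mathrm{Var}(V) = \psi'(y) - \psi'(y+d)$, so the inequality reads $\mathrm{Var}(V) \geq (\E V)^2$. I would establish this by writing $V$ as a mixture of exponential distributions: the density of $V$ on $(0, \infty)$ is $B(y,d)^{-1} e^{-yv}(1-e^{-v})^{d-1}$, and the binomial expansion
\begin{equation*}
(1-e^{-v})^{d-1} = \sum_{k=0}^{\infty} (-1)^k \binom{d-1}{k} e^{-kv}
\end{equation*}
has non-negative coefficients since $(-1)^k \binom{d-1}{k} = \prod_{j=1}^{k}(j-d)/k! \geq 0$ for $d \in (0, 1]$. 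Rearranging gives a representation $f_V(v) = \sum_{k \geq 0} p_k \cdot (y+k)e^{-(y+k)v}$ with weights $p_k \geq 0$ summing to one, so $V \mid \Lambda \sim \mathrm{Exp}(\Lambda)$ for a positive random variable $\Lambda$ supported on $\{y+k : k \geq 0\}$. Using $\E[V \mid \Lambda] = 1/\Lambda$ and $\E[V^2 \mid \Lambda] = 2/\Lambda^2$ together with Jensen applied to $x \mapsto x^2$ gives $\E V^2 = 2\E[\Lambda^{-2}] \geq 2(\E[\Lambda^{-1}])^2 = 2(\E V)^2$, i.e. $\mathrm{Var}(V) \geq (\E V)^2$, as desired.

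On the remaining region $x \in (-\alpha, -\beta)$, $F < 0$ and we want the bracket to be non-negative. Setting $s := -(x+\beta) \in (0, d)$ and using the recurrences $\psi(-s) = \psi(1-s) + 1/s$, $\psi'(-s) = \psi'(1-s) + 1/s^2$, the bracket simplifies to $A^2 - 2A/s + B$ with $A := \psi(d-s) - \psi(1-s)$ and $B := \psi'(d-s) - \psi'(1-s)$. Since $d-s < 1-s$ both lie in $(0, 1)$, monotonicity of $\psi$ and $\psi'$ on $(0, \infty)$ gives $A < 0$ and $B > 0$, so each of the three summands is non-negative and the bracket is strictly positive; combined with $F < 0$, this gives $F'' < 0$. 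The equivalence with convexity (resp. concavity) of the inverse function follows from the standard formula $(F^{-1})''(y) = -F''(F^{-1}(y))/[F'(F^{-1}(y))]^3$ together with the fact that $F'$ has constant sign on $(-\alpha, \infty)$ by Lemma~\ref{fctzetaablem}.
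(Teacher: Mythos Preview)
Your proof is correct and takes a genuinely different route from the paper's.

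The paper's argument is indirect: having already proved Theorem~\ref{asymptfunctional}, it observes that $\zeta_{a,b}$ is a pointwise limit of renormalized log-Laplace transforms $\theta\mapsto (\log n)^{-1}\log\E_{\beta_{a,b}}[e^{\theta T_n^k}]$ on $(-\infty,\lambda_k(\beta_{a,b}))$, hence convex (log-Laplace transforms are convex, and convexity is preserved under pointwise limits). Letting $k\to\infty$ extends this to the full domain, and since the inverse of $x\mapsto \Gamma(x+\alpha)/\Gamma(x+\beta)$ is an affine reparametrization of $\zeta_{a,b}$, its convexity/concavity follows. This is extremely short once Theorem~\ref{asymptfunctional} is in hand, and the paper explicitly remarks that it prefers this route over analytical ones, noting that purely analytical arguments (e.g.\ the cited Qi--Luo reference) seem to be recorded only on the smaller interval $(-(\alpha\wedge\beta),\infty)$.

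Your approach is direct and self-contained: you compute $F''$ and analyze its sign. The case $\alpha-\beta<0$ is immediate from monotonicity of the trigamma function. For $0<\alpha-\beta<1$ on $x>-\beta$, your probabilistic trick --- recognizing the required inequality as $\mathrm{Var}(V)\ge(\E V)^2$ for $V=-\log U$ with $U\sim\mathrm{Beta}(y,d)$, and proving it by writing $V$ as a scale mixture of exponentials via the non-negative binomial expansion of $(1-e^{-v})^{d-1}$ --- is a neat self-contained argument. Your treatment of the remaining strip $(-\alpha,-\beta)$ via the recurrence for $\psi$ and $\psi'$ covers the region the cited analytical references apparently miss. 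One minor point you glossed over: you do not address the gluing point $x=-\beta$, where the formula for $F''$ has an apparent singularity; but since Lemma~\ref{fctzetaablem} already gives $F\in\mathcal{C}^1$ on $(-\alpha,\infty)$, concavity on each open subinterval and continuity of $F'$ at $-\beta$ imply that $F'$ is non-increasing globally, so concavity holds on the whole interval.

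In summary: the paper's proof buys brevity at the cost of depending on the paper's main machinery; your proof buys independence from that machinery (and covers the full interval directly) at the cost of a somewhat longer argument.
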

It is possible to justify Lemma \ref{convexity} via analytical arguments, at least on $(-(\alpha \wedge \beta),\infty)$ (see for example \cite[Rem. 18]{qiluo}). However, we prefer to derive it as a corollary of Theorem \ref{asymptfunctional}. 
\begin{proof}
Let $k \geq 2$ and set $\psi_k(n):=\mathds{1}_{[k,\infty)}(n)$. If $0<\alpha-\beta<1$ (resp. $\alpha-\beta<0$), choosing $b=\alpha$ and $a=2+\beta-\alpha$, noting that we then have $a \in (1,2)$ (resp. $a>2$) and applying Theorem \ref{asymptfunctional} to $\psi_k$, we get from \eqref{asymptfunctional02} that, on $(-\infty,M^{\beta_{a,b}}(\psi_k))$, $\zeta_{a,b}(\cdot)$ is a limit of renormalized log-Laplace transforms so it is convex (see for example \cite[Lem. 2.3.9]{dembozeitouni}). Since $M^{\beta_{a,b}}(\psi_k)=\lambda_k(\beta_{a,b})$ and $k$ can be chosen arbitrary large, we get that $\zeta_{a,b}(\cdot)$ is convex on its whole domain. Then, the reciprocal of $x \mapsto \Gamma(x+\alpha)/\Gamma(x+\beta)$ is obtained by composing $\zeta_{a,b}(\cdot)$ and an affine function that has positive (resp. negative) derivative, so that reciprocal is convex (resp. concave). 
\end{proof}

The following lemma is classical so we omit its proof. 
\begin{lemma} \label{fctl2b}
The Digamma function $\psi(\cdot):=\Gamma'(\cdot)/\Gamma(\cdot)$ is smooth and concave-increasing on $(0,\infty)$. It has limit $-\infty$ as $x \to 0$ and $\infty$ as $x \to \infty$. It's inverse function is well-defined from $\mathbb{R}$ to $(0,\infty)$ and it is smooth and increasing. 
\end{lemma}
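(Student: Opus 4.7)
The plan is to derive all claimed properties from the standard series representation
\[
\psi(x) = -\gamma + \sum_{n=0}^{\infty} \left ( \frac{1}{n+1} - \frac{1}{n+x} \right ), \qquad x \in (0,\infty),
\]
where $\gamma$ is Euler's constant. This representation is a direct consequence of the Weierstrass product for $\Gamma$, which immediately gives that $\Gamma$ is smooth and non-vanishing on $(0,\infty)$, hence $\psi = (\log \Gamma)'$ is $\mathcal{C}^{\infty}$ on $(0,\infty)$.

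Next, I would obtain monotonicity and concavity by termwise differentiation of the series, which is legitimate since the resulting series converge uniformly on compacts of $(0,\infty)$. This yields
\[
\psi'(x) = \sum_{n=0}^{\infty} \frac{1}{(n+x)^2} > 0, \qquad \psi''(x) = -2\sum_{n=0}^{\infty} \frac{1}{(n+x)^3} < 0,
\]
proving that $\psi$ is strictly increasing and strictly concave on $(0,\infty)$. For the boundary behavior, I would use the functional equation $\psi(x+1) = \psi(x) + 1/x$ (which follows from $\Gamma(x+1) = x\Gamma(x)$): as $x \to 0^+$, $\psi(x) = \psi(x+1) - 1/x$ and $\psi(x+1) \to \psi(1) = -\gamma$, giving $\psi(x) \to -\infty$. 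As $x \to \infty$, iterating the functional equation $N$ times and comparing to a harmonic sum yields $\psi(x+N) = \psi(x) + \sum_{k=0}^{N-1}1/(x+k)$, which diverges in $N$ for fixed $x$ and forces $\psi(x) \to \infty$ via monotonicity (alternatively one can quote Stirling: $\psi(x) = \log x - 1/(2x) + O(1/x^2)$).

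Finally, since $\psi$ is a continuous strictly increasing bijection from $(0,\infty)$ onto $\mathbb{R}$ (by the intermediate value theorem and the two limits above), its inverse $\psi^{-1}: \mathbb{R} \to (0,\infty)$ is well-defined and strictly increasing. The smoothness of $\psi^{-1}$ then follows from the smooth inverse function theorem applied at every point of $(0,\infty)$, since $\psi'$ never vanishes there. No step is particularly delicate: the only point requiring minimal care is justifying termwise differentiation of the series for $\psi'$ and $\psi''$, which is routine.
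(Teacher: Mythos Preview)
Your proof is correct and complete; the paper itself omits the proof entirely, stating only that the lemma is classical. Your argument via the series representation, termwise differentiation for $\psi'>0$ and $\psi''<0$, the functional equation for the boundary limits, and the inverse function theorem is the standard route and would serve well as the missing justification.
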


\begin{remark}
Proceeding as in the proof of Lemma \ref{convexity}, we see that the concavity of the Digamma function can also be recovered from Theorem \ref{asymptfunctional} (in the case $a=2$). 
\end{remark}

\subsection{The classical law of large numbers for absorption times} \label{appendlgn}

In this section we justify that, when $\Lambda=\beta_{a,b}$ with $a>1$ and $b>0$, the limit $\zeta'_{a,b}(0)$ from \eqref{lgn} indeed coincides with the limit $1/\mu(\beta_{a,b})$ from the classical result \eqref{lgnclassical}. On the one hand, since $\zeta'_{a,b}(0)=1/L'_{a,b}(0)$, using \eqref{deffcttoinvert} and \eqref{exprfprime} we get, when $a \in (1,\infty)\setminus\{2\}$, 
\begin{align}
\frac1{\zeta'_{a,b}(0)} = L'_{a,b}(0) & = \frac{\Gamma(a)}{(2-a)(a-1)} \frac{\dd}{\dd y} \left ( \frac{\Gamma(b+y)}{\Gamma(a+b+y-2)} \right )_{|y=0} \nonumber \\
& =  \frac{\Gamma(a) \Gamma(b) (\psi(b)-\psi(a+b-2))}{(2-a)(a-1) \Gamma(a+b-2)}, \label{limfromrecoveredlgnaneq2}
\end{align}
where $\psi(\cdot):=\Gamma'(\cdot)/\Gamma(\cdot)$ is the Digamma function. When $a=2$ we obtain similarly 
\begin{align}
\frac1{\zeta'_{2,b}(0)} = L'_{2,b}(0) & = \frac{\dd}{\dd y} \left ( \psi(b+y) \right )_{|y=0} = \psi'(b). \label{limfromrecoveredlgnaeq2}
\end{align}
On the other hand, using the definition of $\mu(\Lambda)$ in \eqref{lgnclassical}, that $\beta_{a,b}(dr):=r^{a-1}(1-r)^{b-1}\dd r$, the definition of the beta function $B(\cdot,\cdot)$, the identity $B(x,y)=\Gamma(x)\Gamma(y)/\Gamma(x+y)$ and \eqref{exprfprime}, we get, when $a \in (1,\infty)\setminus\{2\}$, 
\begin{align}
\mu(\beta_{a,b})&=-\int_0^1\log(1-r)r^{a-3}(1-r)^{b-1} \dd r = -\frac{\dd}{\dd y} \left ( \int_0^1 r^{a-3}(1-r)^{y-1} \dd r \right )_{|y=b} \nonumber \\
&= -\frac{\dd}{\dd y} \left ( B(a-2,y) \right )_{|y=b} = -\Gamma(a-2)\frac{\dd}{\dd y} \left ( \frac{\Gamma(y)}{\Gamma(y+a-2)} \right )_{|y=b} \nonumber \\
&= \frac{\Gamma(a)}{(2-a)(a-1)} \frac{\Gamma(b)}{\Gamma(a+b-2)} (\psi(b)-\psi(a+b-2)). \label{limclassicallgnaneq2}
\end{align}
Then, when $a=2$, using dominated convergence and \eqref{limclassicallgnaneq2} we get 
\begin{align}
\mu(\Lambda_{2,b})&=-\int_0^1\log(1-r)r^{-1}(1-r)^{b-1} \dd r = - \lim_{x \rightarrow 2} \int_0^1\log(1-r)r^{x-3}(1-r)^{b-1} \dd r \nonumber \\
&= \lim_{x \rightarrow 2} \frac{\Gamma(x)\Gamma(b)}{(x-1)\Gamma(x+b-2)} \frac{\psi(b+x-2)-\psi(b)}{x-2} = \psi'(b). \label{limclassicallgnaeq2}
\end{align}
Comparing \eqref{limfromrecoveredlgnaneq2} and \eqref{limclassicallgnaneq2} when $a \in (1,\infty)\setminus\{2\}$, and \eqref{limfromrecoveredlgnaeq2} and \eqref{limclassicallgnaeq2} when $a=2$, we get that, in any case, we have $\zeta'_{a,b}(0)=1/\mu(\beta_{a,b})$, proving the claim. 

\subsection*{Acknowledgment}
This work was supported by Beijing Natural Science Foundation, project number IS24067. 

\bibliographystyle{plain}
\bibliography{thbiblio}

\end{document}